\documentclass[a4paper,english,11pt]{article}

\usepackage[a4paper,left=2.5cm,right=2.5cm,top=2.5cm,bottom=2.5cm]{geometry}
\usepackage{bbm}
\usepackage{latexsym,amsthm,amsmath,amscd,amssymb,color,url,amsfonts}
\usepackage[utf8]{inputenc}
\usepackage[T1]{fontenc} 
\usepackage{mathrsfs}
\usepackage[english]{babel}
\usepackage{libertine}
\usepackage{caption} 
\usepackage{graphicx}
\usepackage{MnSymbol}
\usepackage{hyperref}
\usepackage{setspace}
\usepackage{chngpage}

\newtheorem{lemma}{Lemma}[section]
\newtheorem{thm}[lemma]{Theorem}
\newtheorem{thmintro}{Theorem}[section]
\newtheorem{corointro}[thmintro]{Corollary}
\newtheorem*{thm*}{Theorem}

\newtheorem{rem}[lemma]{Remark}
\newtheorem{propo}[lemma]{Proposition}
\newtheorem{propr}[lemma]{Property}

\newtheorem{cor}[lemma]{Corollary}

\newtheorem{example}[lemma]{Example}

\newtheorem{defn}[lemma]{Definition}

\newtheorem{nota}[lemma]{Notation}

\title{Geodesics and visual boundary of horospherical products}

\author{Tom Ferragut}

\date{\today}

\begin{document}
\maketitle

\begin{abstract}
We study the geometry of \textit{horospherical products} by providing a description of their distances, geodesics and visual boundary. These products contains both discrete and continuous examples, including Cayley graphs of lamplighter groups and solvable Lie groups of the form $\mathbb{R}\ltimes (N_1\times N_2)$, where $N_1$ and $N_2$ are two simply connected, nilpotent Lie groups.
\end{abstract}

\tableofcontents

\section{Introduction}

A horospherical product is a metric space constructed from two Gromov hyperbolic spaces $X$ and $Y$, it is included in their Cartesian product $X\times Y$ and can be seen as a diagonal in it. Let $\beta_X:X\to\mathbb{R} $ and $\beta_Y:Y\to \mathbb{R}$ be two Busemann functions. The horospherical product of $X$ and $Y$, denoted by $X\bowtie Y$, is defined as the set of points in $X\times Y$ such that the two Busemann functions add up to zero, namely
\begin{equation}
X\bowtie Y:=\lbrace (x,y)\in X\times Y\ /\ \beta_X(x)+\beta_Y(y)=0 \rbrace.\nonumber
\end{equation}
The level-lines of the Busemann functions are called \textit{horospheres}, one can see the horospherical product $X\bowtie Y$ as $X$ crossed with an upside down copy of $Y$ in parallel to these horospheres. We will call height function the opposite of the chosen Busemann function.
\\\\Let $N$ be a simply connected, nilpotent Lie group and let $A$ be a derivation of $\mathrm{Lie}(N)$ whose eigenvalues have positive real parts. Then $\mathbb{R}\ltimes_A N$ is called a Heintze group and is Gromov hyperbolic, they are the only examples of negatively curved Lie groups. Let $X$ and $Y$ are two Heintze groups, we can choose the Busemann functions to be such that $\forall (t,n)\in \mathbb{R}\ltimes_A N$ we have $\beta(t,n)=-t$. Then we obtain $$(\mathbb{R}\ltimes_{A_1} N_1)\bowtie (\mathbb{R}\ltimes_{A_2} N_2)=\mathbb{R}\ltimes_{\mathrm{Diag}(A_1,-A_2)} (N_1\times N_2).$$
When $N=\mathbb{R}$, the corresponding Heintze group is a hyperbolic plan $\mathbb{H}^2$, and as their horospherical products we obtain the Sol geometries, one of the eight Thurston's geometries. We can also build Diestel-Leader graphs and the Cayley 2-complexes of Baumslag-Solitar groups $\mathrm{BS}(1,n)$ as the horospherical products of trees or hyperbolic plans. In the second section of \cite{Woess}, the last three sets of examples are well detailed, and presented as horocyclic products of either regular trees or the hyperbolic plan $\mathbb{H}^2$. We choose the name horospherical product instead of horo\textit{cyclic} product since in higher dimension, level-sets according to a Busemann function are not horocycles but horospheres. 

As Woess suggested in the end of \cite{Woess}, we explore here a generalization for horospherical products. The horospherical product construction can be realized for more than two spaces, see \cite{BNW} for a study of the Brownian motion on a multiple horospherical product of trees. However in this work we will stay in the setting of two Gromov hyperbolic spaces.
\\\\To study the geometry of horospherical products we require that our components $X$ and $Y$ are two proper, geodesically complete, Gromov hyperbolic, Busemann spaces. A Busemann space is a metric space where the distance between any two geodesics is convex, and a metric space $X$ is geodesically complete if and only if a geodesic segment $\alpha:I\to X$ can be prolonged into a geodesic line $\hat{\alpha} :\mathbb{R}\to X$. The Busemann hypothesis suits with the definition of horospherical product since we require the two heights functions to be exactly opposite. Furthermore, adding the assumptions that $X$ and $Y$ are geodesically complete allows us to prove that the horospherical product $X\bowtie Y$ is connected (see Lemma \ref{HoroProdConnected}). 
\\\\In the next part of this introduction we present our main results, which hold when $X$ and $Y$ are two proper, geodesically complete, Gromov hyperbolic, Busemann spaces. It covers the case where $X$ and $Y$ are solvable Lie groups of the form $\mathbb{R}\ltimes_A N$.

In \cite{FB1} and \cite{FB2}, using the horospherical product structure of treebolic space, Farb and Mosher proved a rigidity results for quasi-isometries of $\mathrm{BS}(1,n)$. In \cite{EFW1} and \cite{EFW2}, Eskin, Fisher and Whyte obtained a similar rigidity results for the Diestel-Leader graphs and the Sol geometries, again using their horospherical product structure.

Besides being results on their own, the tools we develop in this paper are used in \cite{TF2} to study the quasi-isometry classification of the aforementioned horospherical products. In \cite{TF2} we generalise the results obtained by Eskin, Fisher and Whyte in \cite{EFW1}, and provide new quasi-isometric classifications for some family of solvable Lie groups.
\\\\There are many possible choices for the distance on $X\bowtie Y$ in this paper we work with a family of length path metrics induced by distances on $X\times Y$ (see Definition \ref{DefNDistHoro}). We require that the distance on $X\bowtie Y$ comes from an \textit{admissible} norm $N$ on $\mathbb{R}^2$ (e.g. any $\ell_p$ norm). Our first result describes these distances.
\begin{thmintro}\label{THMDist}
Let $d_{\bowtie}$ be an admissible distance on $X\bowtie Y$. Then there exists a constant $M$ depending only on the metric spaces $(X\bowtie Y,d_{\bowtie})$ such that for all $p=(p^X,p^Y),q=(q^X,q^Y)\in X\bowtie Y$:
\begin{align*}
\Big|d_{\bowtie}(p,q)-\Big(d^X(p^X,q^X)+d^Y(p^Y,q^Y)-|h(p)-h(q)|\Big)\Big|\leq M.
\end{align*}
\end{thmintro}
Therefore, given two admissible distances $d$ and $d'$, the horospherical products $(X\bowtie Y,d)$ and $(X\bowtie Y,d')$ are roughly isometric, which means that there exists a $(1,c)$-quasi-isometry between them, for a constant $c\geq 0$. Le Donne, Pallier and Xie proved in \cite{LDPX} that for the solvable groups $\mathbb{R}\ltimes_{\mathrm{Diag}(A_1,-A_2)} (N_1\times N_2)$, changing the left-invariant Riemannian metric results in the identity map being a rough similarity.

Theorem \ref{THMDist} is one of the tools we use in \cite{TF2}, where we prove a geometric rigidity property of quasi-isometries between families of horospherical products. This property leads to quasi-isometric invariants in such spaces, and a first result in the quasi-isometry classification of some solvable Lie groups.
\\\\Throughout this paper we provide a coarse description of geodesics and of the visual boundary of a broad family of horospherical products. 

Following the characterisation of the distances on horospherical products, we describe the shape of geodesic segments. 

\begin{thmintro}\label{THMAINTRO}
Let $X$ and $Y$ be two proper, geodesically complete, $\delta$-hyperbolic, Busemann spaces and let $d_{\bowtie}$ be an admissible distance on $X\bowtie Y$. Let $p=\left(p^X,p^Y\right)$ and $q=\left(q^X,q^Y\right)$ be two points of $X\bowtie Y$ and let $\alpha$ be a geodesic segment of $(X\bowtie Y,d_{\bowtie})$ linking $p$ to $q$. There exists a constant $M$ depending only on $(X\bowtie Y,d_{\bowtie})$, and there exist two vertical geodesics $V_1=\left(V^X_1,V^Y_1\right)$ and $V_2=\left(V^X_2,V^Y_2\right)$ such that: 
\begin{enumerate}
\item If ~ $h(p)\leq h(q)-M$ ~ then $\alpha$ is in the $M$-neighbourhood of $V_1\cup\left(V^X_1,V^Y_2\right)\cup V_2$;
\item If ~ $h(p)\geq h(q)+M$ ~ then $\alpha$ is in the $M$-neighbourhood of $ V_1\cup\left(V^X_2,V^Y_1\right)\cup V_2$;
\item If ~ $|h(p)-h(q)|\leq M$ ~ then at least one of the conclusions of $1.$ or $2.$ holds.
\end{enumerate}
Specifically $V_1$ and $V_2$ can be chosen such that $p$ is close to $V_1$ and $q$ is close to $V_2$.
\end{thmintro}

\begin{figure}
\begin{center}
\includegraphics[scale=0.60]{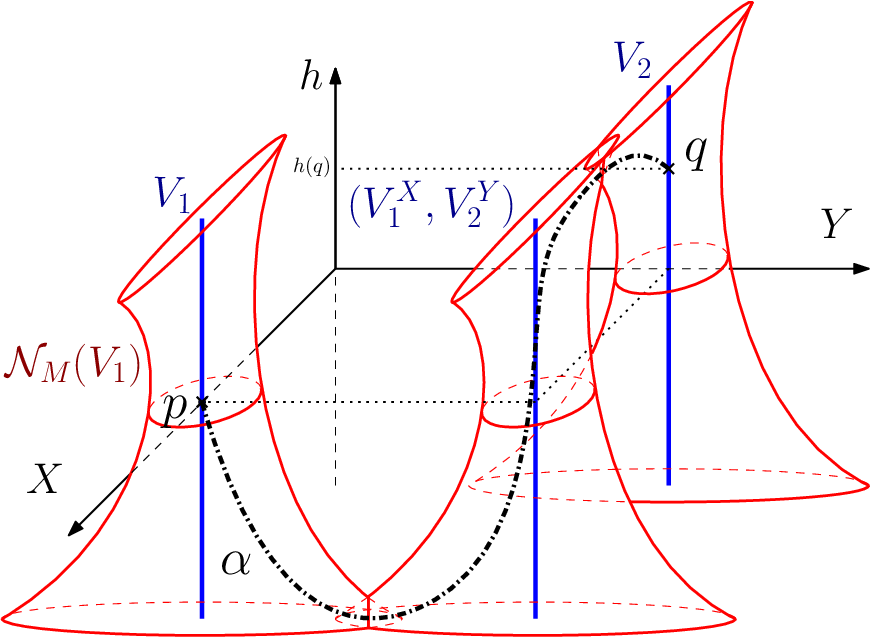}
\captionof{figure}{Shape of geodesic segments when $h(p)\leq h(q)-\kappa$ in $X\bowtie Y$. The neighbourhoods' shapes  are distorted since when going upward, distances are contracted in the "direction" $X$ and expanded in the "direction" $Y$.}
\label{FigTHMAINTRO}
\end{center}
\end{figure}

An example is illustrated on Figure \ref{FigTHMAINTRO} for $h(p)\leq h(q)-\kappa$. Coarsely speaking, Theorem \ref{THMAINTRO} ensures that any geodesic segment is constructed as the concatenation of three vertical geodesics. This result is similar to the Gromov hyperbolic case, where a geodesic segment is in the constant neighbourhood of two vertical geodesics. This result leads us to the existence of unextendable geodesics, which are called \textit{dead-ends}. Geodesics shapes was already well-known in lamplighter groups. In the case of Sol, we recover, up to an additive constant, Troyanov's description of global geodesics (see \cite{Troya}).
\\The horospherical product between $X$ and $\mathbb{R}$ is isometric to $X$, therefore given any vertical geodesic $V^Y$ of $Y$, $X\bowtie V^Y$ is an embedded copy of $X$ in $X\bowtie Y$. A geodesic line of $X\bowtie Y$ looks either like a geodesic of $X$ or like a geodesic of $Y$.

\begin{figure}
\begin{center}
\includegraphics[scale=0.85]{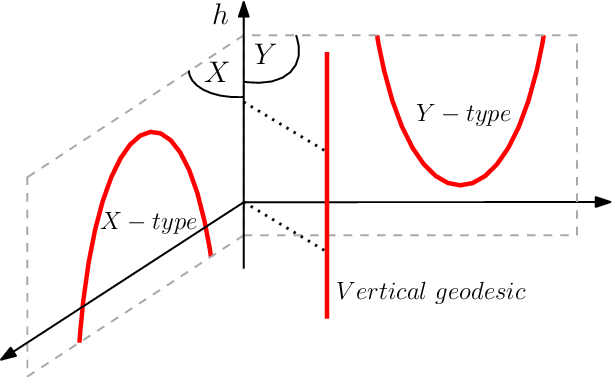}
\captionof{figure}{Different types of geodesics in $X\bowtie Y$.}
\label{FinalThmInIntro}
\end{center}
\end{figure}

\begin{corointro}\label{THMBINTRO}
Let $X$ and $Y$ be two proper, geodesically complete, $\delta$-hyperbolic, Busemann spaces. Then there exists $M\geq 0$ depending only on $\delta$ such that for all geodesic line $\alpha:\mathbb{R}\rightarrow X\bowtie Y$ at least one of the two following statements holds.
\begin{enumerate}
\item $\alpha$ is included in a constant $M$-neighbourhood of a geodesics contained in a embedded copy of $X$;
\item $\alpha$ is included in a constant $M$-neighbourhood of a geodesics contained in a embedded copy of $Y$.
\end{enumerate}
\end{corointro}

If a geodesic verifies both conclusions, it is in the $M$-neighbourhood of a vertical geodesic of $X\bowtie Y$. 
\\\\Let $o\in X\bowtie Y$, the visual boundary of $X\bowtie Y$ with respect to the base point $o$, denoted by $\partial_o(X\bowtie Y)$, stands for the set of equivalence classes of geodesic rays starting at $o$. Consequently to the description of geodesic segments, we obtain that for any geodesic ray $k$ of $X\bowtie Y$ there exists a vertical geodesic ray at finite distance of $k$. Therefore we classify all possible shapes for geodesic rays, then we give a description of the visual boundary of $X\bowtie Y$. 

\begin{thmintro}\label{THMCINTRO}
Let $X$ and $Y$ be two proper, geodesically complete, $\delta$-hyperbolic, Busemann spaces. Let $(w^X,a^X)\in X\times\partial X$, $(w^Y,a^Y)\in Y\times\partial Y$ and let $X\bowtie Y$ be the horospherical product with respect to $(w^X,a^X)$ and $(w^Y,a^Y)$. Then the visual boundary of $X\bowtie Y$ with respect to any point $o=(o^X,o^Y)$ can be decomposed as:
\begin{align*}
\partial_o (X\bowtie Y)=&\Big(\big(\partial X\setminus  \lbrace a^X\rbrace\big)\times\lbrace a^Y\rbrace\Big)\bigcup\Big(\lbrace a^X\rbrace\times\big(\partial Y\setminus \lbrace a^Y\rbrace\big) \Big)
\\=&\Big(\big(\partial X\times\lbrace a^Y\rbrace\big)\bigcup\big(\lbrace a^X\rbrace\times\partial Y\big)\Big)\setminus \lbrace(a^X,a^Y)\rbrace 
\end{align*}
\end{thmintro}

\begin{figure}
\begin{center}
\includegraphics[scale=0.85]{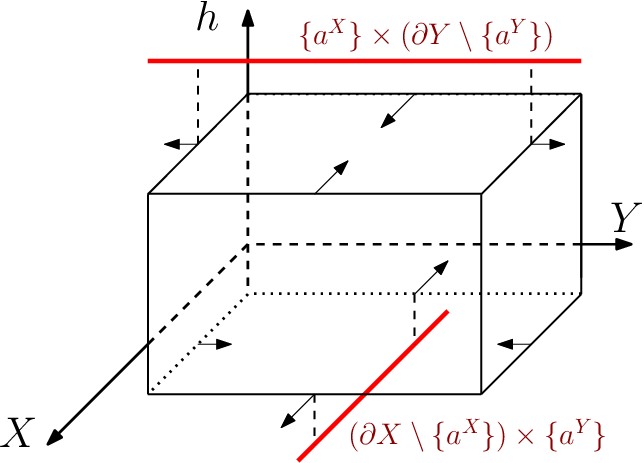}
\captionof{figure}{Depiction of $\partial_o (X\bowtie Y)$.}
\label{FinalThmBoundary}
\end{center}
\end{figure}
When $X:=\mathbb{R}\ltimes_{A_1} N_1$ and $Y:=\mathbb{R}\ltimes_{A_2} N_2$ we obtain that
\begin{align*}
\partial\left(\mathbb{R}\ltimes_{\mathrm{Diag}(A_1,-A_2)} (N_1\times N_2)\right)=N_1\times N_2.
\end{align*}
In the case of Sol, the last result is similar to Proposition 6.4 of \cite{Troya}. However, unlike Troyanov in his work, we are focusing on minimal geodesics and not on local ones. One can see that this visual boundary neither depends on the chosen admissible distance $d$ nor on the base point $o$.

\subsection*{Framework}

The paper is organized as follows.

\begin{enumerate}
\item[$\bullet$] In Section \ref{SectContext} we present the context in which we will construct the horospherical products, namely Gromov hyperbolic, Busemann spaces. 
\item[$\bullet$] Then in Section \ref{SectHoroDefn} we define horospherical products and give some examples.
\item[$\bullet$] In Section \ref{SecMetricEstimat} we present an estimate on the length of paths avoiding horoballs in hyperbolic spaces, namely Lemma \ref{ExpLengthWhenBelowAndReachPoint}, which will be central in our control of the distances on $X\bowtie Y$.
Then we give an estimate of the distances on $X\bowtie Y$ through Theorem \ref{lengthGeod}.
\item[$\bullet$] Last, in Section \ref{SectionThm} , we prove our main results, Theorem \ref{THMDist} follows from Corollary \ref{lengthGeod}. The description of geodesic lines of Theorem \ref{THMAINTRO} follows from Theorem \ref{THMDist} and gives us the tools to prove Theorem \ref{THMCINTRO}.
\end{enumerate}

\section*{Acknowledgement}

This work was supported by the University of Montpellier. 

I thank my two advisers Jeremie Brieussel and Constantin Vernicos for their many relevant reviews and comments.

\newpage
\section{Context}\label{SectContext}

The goal of this section is to present what is a Gromov hyperbolic, Busemann space and what are vertical geodesics in such a space. Let $(H,d_H)$ be a proper, geodesic, metric space. 

\subsection{Gromov hyperbolic spaces}

A geodesic line, respectively ray, segment, of $H$ is the isometric image of a Euclidean line, respectively half Euclidean line, interval, in $H$. By slight abuse, we may call geodesic, geodesic ray or geodesic segment, the map $\alpha:I\rightarrow H$ itself, which parametrises our given geodesic by arclength.
\\\\Let $\delta\geq 0$ be a non-negative number. Let $x$, $y$ and $z$ be three points of $H$. The geodesic triangle $[x,y]\cup[y,z]\cup[z,x]$ is called $\delta$-slim if any of its sides is included in the $\delta$-neighbourhood of the remaining two. The metric space $H$ is called $\delta$-hyperbolic if every geodesic triangle is $\delta$-slim. A metric space $H$ is called Gromov hyperbolic if there exists $\delta\geq 0$ such that $H$ is a $\delta$-hyperbolic space.
\\\\An important property of Gromov hyperbolic spaces is that they admit a nice compactification thanks to their \textit{Gromov boundary}. We call two geodesic rays of $H$ equivalent if their images are at finite Hausdorff distance. Let $w\in H$ be a base point. We define $\partial_{w} H$ the Gromov boundary of $H$ as the set of families of equivalent rays starting from $w$. The boundary $\partial_{w} H$ does not depend on the base point $w$, hence we will simply denote it by $\partial H$. Both $\partial H$ and $H\cup \partial H$, are compact endowed with the Hausdorff topology. For more details, see \cite{GDLH} or chap.III H. p.399 of \cite{BH}.
\\\\Let us fix a point $a\in\partial H$ on the boundary. We call \textbf{vertical geodesic ray}, respectively \textbf{vertical geodesic line}, any geodesic ray in the equivalence class $a$, respectively with one of its rays in $a$. The study of these specific geodesic rays is central in this work. 

\subsection{Busemann spaces and Busemann functions}

A metric space $(H,d_H)$ is \textbf{Busemann} if and only if for every pair of geodesic segments parametrized by arclength $\gamma:[a,b]\rightarrow H$ and $\gamma ':[a',b']\rightarrow H$, the following function is convex:
\begin{align*}
D_{\gamma ,\gamma '}:[a,b]\times[a',b']&\rightarrow H 
\\(t,t')&\mapsto d_H(\gamma(t),\gamma '(t')).
\end{align*}
It is a weaker assumption than being CAT$(0)$ (Theorem $1.3$ of \cite{FLS}), however it implies that $H$ is uniquely geodesic. See Chap.8 and Chap.12 of \cite{Papa} for more details on Busemann spaces. 
\\This convex assumption removes some technical difficulties in a significant number of proofs in this work. If $H$ is a Busemann space in addition to being Gromov hyperbolic, for all $x\in H$ there exists a unique vertical geodesic ray, denoted by $V_x$, starting at $H$. In  fact the distance between two vertical geodesics starting at $x$ is a  convex and bounded function, hence decreasing and therefore constant equal to $0$.
\\The construction of the \textit{horospherical product} of two Gromov hyperbolic space $X$ and $Y$ requires the so called \textbf{Busemann functions}. Their definition is simplified by the Busemann assumption. Let us consider $\partial X$, the Gromov boundary of $X$ (which, in this setting, is the same as the visual boundary). Both the boundary $\partial X$ and $X\cup \partial X$, endowed with the natural Hausdorff topology, are compact. Then, given $a\in\partial X$ a point on the boundary, and $w\in X$ a base point, we define a Busemann function $\beta_{(a,w)}$ with respect to $a$ and $w$. Let $V_w$ be the unique vertical geodesic ray starting from $w$.
\begin{equation}
\forall\ x \in X,\ \beta_{(a,w)}(x):= \limsup\limits_{t\rightarrow + \infty}(d(x,V_w(t))-t)\quad.\nonumber
\end{equation}
This function computes the asymptotic delay a point $x\in X$ has in a race towards $a$ against the vertical geodesic ray starting at $w$. 
The \textbf{horospheres} of $X$ with respect to $(a,w)\in \partial X\times X$ are the level-sets of $\beta_{(a,w)}$. These horospheres depend on the previously chosen couple $(a,w)$ of $\partial X\times X$.

\subsection{Heights functions and vertical geodesics}\label{SecHeight}

In this section we fix $\delta\geq 0$, $H$ a proper, geodesic, $\delta$-hyperbolic space, $w\in H$ a base point and $a\in\partial H$ a point on the boundary of $H$. We call \textbf{height function}, denoted by $h$, the opposite of the Busemann function, $h:=-\beta_{(a,w)}$.

Let us write Proposition 2 chap.8 p.136 of \cite{GDLH} with our notations.

\begin{propo}[\cite{GDLH}, chap.8 p.136]\label{PropHautGDLH}
Let $H$ be a hyperbolic proper geodesic metric space. Let $a\in \partial H$ and $w\in H$, then:
\begin{enumerate}
\item $ \lim\limits_{x\rightarrow a}h_{(a,w)}(x)=+\infty$
\item $ \lim\limits_{x\rightarrow b}h_{(a,w)}(x)=-\infty$, $\forall b \in \partial H \setminus \lbrace a\rbrace$
\item $\forall x,y,z\in H, | \beta_a(x,y)+\beta_a(y,z)-\beta_a(x,z)|\leq 200\delta$.
\end{enumerate}
\end{propo}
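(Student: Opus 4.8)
The plan is to reduce all three assertions to two soft facts about $\delta$-hyperbolic spaces. First, for a fixed geodesic ray $k$ issuing from $w$, the map $t\mapsto d(x,k(t))-t$ is non-increasing and bounded below by $-d(x,k(0))$, so the $\limsup$ in the definition of $\beta_a$ is an honest, decreasing limit. Second, two geodesic rays representing the same boundary point eventually fellow-travel at distance $O(\delta)$: if $k_1,k_2\in a$ there is $\tau\in\mathbb{R}$ with $d\big(k_1(t),k_2(t+\tau)\big)\le c_0\delta$ for all large $t$, and if $k_1,k_2$ moreover have the same origin then one may take $\tau=0$ and $d(k_1(t),k_2(t))$ bounded in terms of $\delta$ for all $t$. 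Combining these, for any single ray $k$ from $w$ to $a$ the number $\lim_t\big(d(x,k(t))-t\big)$ is independent of $k$ up to an additive $O(\delta)$, hence equals $\beta_a(x,w)$ up to $O(\delta)$; likewise $\beta_a(x,y)=\lim_t\big(d(x,k'(t))-t\big)+O(\delta)$ for any ray $k'$ from $y$ to $a$.

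For item~$3$ I would introduce the \emph{exact} Busemann cocycle. Fix once and for all a geodesic ray $\gamma$ with $\gamma(\infty)=a$, and set $c(u,v):=\lim_{t\to\infty}\big(d(u,\gamma(t))-d(v,\gamma(t))\big)$; this limit exists, being the difference of two decreasing bounded-below functions, and $c$ satisfies the \emph{exact} cocycle identity $c(u,v)+c(v,s)=c(u,s)$ term by term. The heart of the argument is the comparison $|\beta_a(u,v)-c(u,v)|\le c_1\delta$. To establish it, take any ray $k'$ from $v$ to $a$; fellow-travelling provides $\tau$ with $d\big(k'(t),\gamma(t+\tau)\big)\le c_0\delta$ for large $t$, so $\lim_t\big(d(u,k'(t))-t\big)=c\big(u,\gamma(0)\big)+\tau+O(\delta)$, uniformly in $u$. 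Specialising to $u=v$, where $d(v,k'(t))=t$ because $k'$ is unit-speed from $v$, forces $\tau=-c\big(v,\gamma(0)\big)+O(\delta)$; substituting back and using the cocycle identity gives $\lim_t\big(d(u,k'(t))-t\big)=c(u,v)+O(\delta)$, uniformly over the rays $k'$ from $v$, hence $\beta_a(u,v)=c(u,v)+O(\delta)$. Statement~$3$ is then the exact identity $c(x,y)+c(y,z)-c(x,z)=0$ perturbed by three $O(\delta)$ errors, and a careful count of the constants coming from slimness of triangles and from fellow-travelling is what is needed to reach the stated value $200\delta$.

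For items~$1$ and~$2$ I would use the identity $2\,(x\mid a)_w=d(w,x)-\beta_a(x,w)+O(\delta)$ between $\beta_a$ and the Gromov product with the boundary point $a$, an immediate consequence of the comparison above. Item~$2$ is then direct: if $x\to b\neq a$ then $(x\mid a)_w$ stays bounded (it converges to $(b\mid a)_w<\infty$) while $d(w,x)\to+\infty$, so $\beta_a(x,w)=d(w,x)-2\,(x\mid a)_w+O(\delta)\to+\infty$, i.e.\ $h_{(a,w)}(x)\to-\infty$. For item~$1$, along a geodesic ray $k$ from $w$ to $a$ one computes directly $\beta_a\big(k(t),w\big)=-t+O(\delta)$, whence $h_{(a,w)}\big(k(t)\big)\to+\infty$; more generally, whenever $x\to a$ with $d(w,x)-(x\mid a)_w$ bounded (equivalently, $x$ within bounded distance of a geodesic ray toward $a$), the same identity gives $\beta_a(x,w)=-d(w,x)+O(\delta)\to-\infty$ and hence $h_{(a,w)}(x)\to+\infty$.

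The only genuine difficulty is quantitative: each step above is soft, but making "eventually fellow-travel at distance $O(\delta)$ with a controlled shift" effective, and then summing up the various slimness and Gromov-product estimates so that the error accumulated in item~$3$ does not exceed $200\delta$, is where the actual work lies. This bookkeeping is the content of Chapter~$8$ of \cite{GDLH}.
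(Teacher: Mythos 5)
First, note what you are comparing against: the paper does not prove this proposition at all. It is quoted (in the paper's sign conventions) from \cite{GDLH}, chap.~8, and is only ever invoked through Corollary~\ref{PropoGeodVertVersa}, i.e.\ along geodesic rays. So there is no proof in the paper to match; your sketch has to stand on its own. On items~$2$ and~$3$ it essentially does: comparing $\beta_a$ with the exact cocycle $c(u,v)=\lim_t\big(d(u,\gamma(t))-d(v,\gamma(t))\big)$ along one fixed ray, controlling the comparison by fellow-travelling uniformly over the rays $k'$ entering the supremum, and deducing item~$2$ from $\beta_a(x,w)=d(w,x)-2(x\mid a)_w+O(\delta)$, is the standard and correct route.

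There are, however, two genuine gaps. (i) For item~$1$ you only treat the radial case: you assume $d(w,x)-(x\mid a)_w$ stays bounded. This restriction cannot be removed by more care, because item~$1$ as literally stated is false for the Gromov topology on $H\cup\partial H$: in a tree, attach at the point of parameter $n$ of the ray $k$ from $w$ to $a$ a segment of length $2n$ with endpoint $x_n$; then $(x_n\mid a)_w=n\to\infty$, so $x_n\to a$, while $\beta_a(x_n,w)=2n-n=n$, i.e.\ $h_{(a,w)}(x_n)\to-\infty$. So your argument proves a correct radial statement (which is exactly what Corollary~\ref{PropoGeodVertVersa} needs, since there $x$ runs along a geodesic ray in the class $a$), but it neither proves the statement as printed nor observes that it cannot be proved; a referee-proof write-up must either restrict item~$1$ accordingly or replace it by the true implication $h_{(a,w)}(x_n)\to+\infty\Rightarrow x_n\to a$. (ii) The quantitative content of item~$3$, the explicit constant $200\delta$, is precisely what you defer: each of your $O(\delta)$ terms rests on a fellow-travelling estimate that itself needs an explicit bound, so as written you obtain $|\beta_a(x,y)+\beta_a(y,z)-\beta_a(x,z)|\leq C\delta$ for an unspecified universal $C$, which is weaker than the stated inequality (though nothing in the paper uses the value $200$ beyond its being a fixed multiple of $\delta$).
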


Furthermore, a geodesic ray is in $a\in\partial H$ if and only if its height tends to $+\infty$.

\begin{cor}\label{PropoGeodVertVersa}
Let $H$ be a hyperbolic proper geodesic metric space. Let $a\in \partial H$ and $w\in H$, and let $\alpha:[0,+\infty[\to H$ be a geodesic ray. The two following properties are equivalent:
\begin{enumerate}
\item $\lim\limits_{t\rightarrow +\infty} h_{(a,w)}(\alpha(t))= +\infty$
\item $\alpha([0,+\infty[)\in a$.
\end{enumerate}
\end{cor}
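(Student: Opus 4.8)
The plan is to combine the first two limit statements of Proposition~\ref{PropHautGDLH} with the standard fact that, in a proper geodesic hyperbolic space, every geodesic ray converges to a well-defined point of the Gromov boundary (see \cite{BH}, chap.~III.H), and with the fact that $H\cup\partial H$ is a metrizable compactification, so that those limits can be read off along the ray. The corollary is really just the formalisation of the sentence preceding it, so the proof will be short: a dichotomy on the endpoint of $\alpha$ fed into the two limit formulas.

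For the implication $(2)\Rightarrow(1)$: if $\alpha([0,+\infty[)\in a$ then, by definition of the Gromov boundary, $\alpha(t)\to a$ in $H\cup\partial H$ as $t\to+\infty$. Statement~$1$ of Proposition~\ref{PropHautGDLH} then yields $h_{(a,w)}(\alpha(t))\to+\infty$, which is exactly $(1)$. For the converse $(1)\Rightarrow(2)$: since $H$ is proper, the geodesic ray $\alpha$ admits a well-defined endpoint $b\in\partial H$, i.e. $\alpha(t)\to b$. Assume for contradiction that $b\neq a$. Then statement~$2$ of Proposition~\ref{PropHautGDLH} forces $h_{(a,w)}(\alpha(t))\to-\infty$, contradicting the hypothesis $h_{(a,w)}(\alpha(t))\to+\infty$. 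Hence $b=a$, that is $\alpha([0,+\infty[)\in a$. (If one wishes to be extra careful in passing from the real parameter to the integer parameter, one can use that $h_{(a,w)}$ is $1$-Lipschitz, being the opposite of a Busemann function, so that the limit along $t\in\mathbb{N}$ controls the limit along $t\in\mathbb{R}$; but since $\alpha(t)\to a$, resp. $\alpha(t)\to b$, already holds for the real parameter, this is not even needed.)

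The argument carries no genuine analytic difficulty; the \emph{only} point deserving care — and the main, rather mild, obstacle — is the topological bookkeeping: one must make sure that the convergences ``$x\to a$'' and ``$x\to b$'' appearing in Proposition~\ref{PropHautGDLH} are the same notion as convergence of $\alpha(t)$ towards the endpoint of the ray. This is precisely where properness of $H$ intervenes, both to guarantee that $\alpha$ possesses an endpoint $b\in\partial H$ and to ensure that $H\cup\partial H$ is a (metrizable) compactification in which the stated limits make literal sense along the ray.
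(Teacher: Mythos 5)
Your proposal is correct and follows essentially the same route as the paper: the paper's proof also assigns to the ray its endpoint $b\in\partial H$ and then invokes the two limit statements of Proposition~\ref{PropHautGDLH}, exactly your dichotomy on whether $b=a$. You merely spell out the topological bookkeeping that the paper leaves implicit.
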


\begin{proof}\ 
As for any geodesic ray $\alpha:[0,+\infty[\to H$ there exists $b\in\partial H$ such that $\alpha ([0,+\infty[)\in b$, this proposition is a particular case of Proposition \ref{PropHautGDLH}.
\end{proof}

An important property of the height function is to be Lipschitz.

\begin{propo}\label{PropHautLipsch}
Let $a\in\partial H$ and $w\in H$. The height function $h_a:=-\beta_a (\cdot,w)$ is Lipschitz:
\begin{equation}
\forall x,y\in H, |h_{(a,w)}(x)-h_{(a,w)}(y)|\leq d (x,y).\nonumber
\end{equation}
\end{propo}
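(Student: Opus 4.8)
The plan is to prove that the height function $h_{(a,w)} = -\beta_a(\cdot,w)$ is $1$-Lipschitz by reducing the claim to the corresponding estimate for the Busemann function $\beta_a$, and then establishing that estimate directly from the definition of $\beta_a$ as a supremum over limsups of quantities $d(x,k(t)) - t$. Since $|h_{(a,w)}(x) - h_{(a,w)}(y)| = |\beta_a(x,w) - \beta_a(y,w)|$, it suffices to show $|\beta_a(x,w) - \beta_a(y,w)| \leq d(x,y)$, and by symmetry in $x,y$ it is enough to show $\beta_a(x,w) - \beta_a(y,w) \leq d(x,y)$.

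First I would fix a geodesic ray $k \in a$ starting from $w$ and compare $d(x,k(t))$ with $d(y,k(t))$. By the triangle inequality, $d(x,k(t)) \leq d(x,y) + d(y,k(t))$ for every $t$, hence $d(x,k(t)) - t \leq d(x,y) + \big(d(y,k(t)) - t\big)$. Taking $\limsup_{t\to+\infty}$ on both sides gives $\limsup_t (d(x,k(t)) - t) \leq d(x,y) + \limsup_t (d(y,k(t)) - t)$, and the right-hand side is at most $d(x,y) + \beta_a(y,w)$ by the definition of $\beta_a(y,w)$ as a supremum over all such rays $k$. Thus $\limsup_t (d(x,k(t)) - t) \leq d(x,y) + \beta_a(y,w)$ for every ray $k \in a$ starting from $w$; taking the supremum over all such $k$ yields $\beta_a(x,w) \leq d(x,y) + \beta_a(y,w)$, which is the desired one-sided bound. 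Swapping the roles of $x$ and $y$ gives the reverse inequality, and together they establish $|\beta_a(x,w) - \beta_a(y,w)| \leq d(x,y)$.

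There is essentially no serious obstacle here; the argument is a routine manipulation of suprema and limsups through the triangle inequality. The one point requiring a little care is the interchange of $\limsup$ with the additive constant $d(x,y)$ and the monotonicity of $\limsup$ under the pointwise inequality $d(x,k(t)) - t \leq d(x,y) + (d(y,k(t)) - t)$: both are standard facts about $\limsup$, namely $\limsup_t (c + f(t)) = c + \limsup_t f(t)$ for a constant $c$, and $f \leq g$ pointwise implies $\limsup_t f \leq \limsup_t g$. One should also note that the supremum in the definition of $\beta_a$ is over the \emph{same} set of rays (those in the class $a$ starting from $w$) on both sides, so the comparison is legitimate. With these observations in place, the proof is immediate.
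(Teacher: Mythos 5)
Your proposal is correct and follows essentially the same route as the paper: apply the triangle inequality $d(x,k(t))\leq d(x,y)+d(y,k(t))$ inside the limsup, pass to the supremum over rays $k\in a$ starting at $w$ to get $\beta_a(x,w)\leq d(x,y)+\beta_a(y,w)$, and conclude by exchanging the roles of $x$ and $y$. The extra care you take in justifying the monotonicity of limsup and the fact that both suprema range over the same family of rays is a welcome, if standard, precision.
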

\begin{proof}
By using the triangle inequality we have for all $x,y\in H$:
\begin{align}
-h_{(a,w)}(x)&=\beta_a (x,w)=\sup\lbrace \limsup\limits_{t\rightarrow + \infty}(d(x,k(t))-t)\mid \text{k vertical rays starting at } w \rbrace\nonumber
\\&\leq d(x,y)+\sup\lbrace \limsup\limits_{t\rightarrow + \infty}(d( y,k(t))-t)\mid \text{k vertical rays starting at } w \rbrace \nonumber
\\&\leq d(x,y)+\beta_a (y,w) \leq d(x,y)-h_{(a,w)} (y).\nonumber
\end{align}
The result follows by exchanging the roles of $x$ and $y$.
\end{proof}

From now on, we fix a given $a\in\partial H$ and a given $w\in H$. Therefore we simply denote the height function by $h$ instead of $h_{(a,w)}$. 

\begin{propo}\label{PropoQuaisLinGeodVert}
Let $\alpha$ be a vertical geodesic of $H$. We have the following control on the height along~$\alpha$:
\begin{equation}
\forall t_1,t_2\in\mathbb{R},\ t_2-t_1-200\delta \leq h\bigl( \alpha(t_2)\bigr)-h\big(\alpha(t_1)\big)\leq  t_2-t_1+200\delta.\nonumber
\end{equation}
\end{propo}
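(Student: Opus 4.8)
The plan is to treat the two inequalities separately and to reduce first to the case $t_2 \ge t_1$. Exchanging the roles of $t_1$ and $t_2$ turns the left-hand inequality into the right-hand one, so it suffices to establish both bounds under the assumption $t_2 \ge t_1$; in that situation $d(\alpha(t_1),\alpha(t_2)) = t_2 - t_1$ because $\alpha$ is parametrised by arclength.

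For the upper bound I would simply invoke the Lipschitz property of Proposition \ref{PropHautLipsch}: it gives $h(\alpha(t_2)) - h(\alpha(t_1)) \le d(\alpha(t_1),\alpha(t_2)) = t_2 - t_1 \le t_2 - t_1 + 200\delta$, so no further work is needed there.

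The substantive part is the lower bound. First I would rewrite the left-hand side in terms of Busemann functions, $h(\alpha(t_2)) - h(\alpha(t_1)) = \beta_a(\alpha(t_1),w) - \beta_a(\alpha(t_2),w)$, and then apply the approximate cocycle identity of Proposition \ref{PropHautGDLH}(3) with the triple $x = \alpha(t_1)$, $y = \alpha(t_2)$, $z = w$, which yields
\[
\beta_a(\alpha(t_1),w) - \beta_a(\alpha(t_2),w) \ \ge\ \beta_a\bigl(\alpha(t_1),\alpha(t_2)\bigr) - 200\delta .
\]
It then remains to check that $\beta_a(\alpha(t_1),\alpha(t_2)) \ge t_2 - t_1$. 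Here I would use that $\alpha$ is vertical: the forward tail $k(s) := \alpha(t_2 + s)$, $s \ge 0$, is a geodesic ray starting at $\alpha(t_2)$ whose height tends to $+\infty$, hence $k \in a$ by Corollary \ref{PropoGeodVertVersa}, so $k$ is an admissible competitor in the supremum defining $\beta_a(\alpha(t_1),\alpha(t_2))$. Evaluating that competitor gives
\[
\beta_a\bigl(\alpha(t_1),\alpha(t_2)\bigr) \ \ge\ \limsup_{s\to+\infty}\bigl(d(\alpha(t_1),\alpha(t_2+s)) - s\bigr) \ =\ \limsup_{s\to+\infty}\bigl((t_2 - t_1 + s) - s\bigr) \ =\ t_2 - t_1 ,
\]
again using that $\alpha$ is an arclength geodesic. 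Combining the two displays gives $h(\alpha(t_2)) - h(\alpha(t_1)) \ge t_2 - t_1 - 200\delta$, and the case $t_2 < t_1$ follows by the symmetry already noted.

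I do not expect a real obstacle here; the only delicate point is to set up the cocycle inequality of Proposition \ref{PropHautGDLH}(3) with the correct base point — the intermediate Busemann function must be based at $\alpha(t_2)$, the higher of the two points, so that the tail of $\alpha$ itself appears as a competitor in the supremum. Everything else is a direct consequence of the $1$-Lipschitz property of the height and of $\alpha$ being parametrised by arclength.
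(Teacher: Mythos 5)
Your proposal is correct and follows essentially the same route as the paper: the lower bound comes from the approximate cocycle relation of Proposition \ref{PropHautGDLH}(3) based at $\alpha(t_2)$ together with the forward tail of $\alpha$ used as a competitor in the supremum defining $\beta_a(\alpha(t_1),\alpha(t_2))$. The only (harmless) difference is that you obtain the upper bound directly from the $1$-Lipschitz property of $h$ and arclength, whereas the paper deduces it by exchanging the roles of $t_1$ and $t_2$ in the lower bound.
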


\begin{proof}\ 
Let $t_1,t_2\in\mathbb{R}$, then:
\begin{align}
h( \alpha(t_2))-h(\alpha(t_1))&=\beta\big( \alpha(t_1),w\big)-\beta\big(\alpha(t_2),w\big)\nonumber
\\&= \beta\big( \alpha(t_1),\alpha(t_2)\big)-\Big(\beta\big( \alpha(t_2),w\big)-\beta\big(\alpha(t_1),w\big)+\beta\big( \alpha(t_1),\alpha(t_2)\big)\Big).\nonumber
\end{align}
The third point of Proposition \ref{PropHautGDLH} applied to the last bracket gives:
\begin{equation}\label{formule1}
\beta\big( \alpha(t_1),\alpha(t_2)\big)-200\delta\leq h( \alpha(t_2))-h(\alpha(t_1))\leq \beta\big( \alpha(t_1),\alpha(t_2)\big)+200\delta.
\end{equation}
Since $t\mapsto \alpha(t+t_2)$ is a vertical geodesic starting at $\alpha(t_2)$ we have:
\begin{align}
\beta\big( \alpha(t_1),\alpha(t_2)\big)&=\sup\Big\lbrace \limsup\limits_{t\rightarrow + \infty}\big(d(\alpha(t_1),k(t))-t\big)\Big\mid \text{k vertical rays starting at } \alpha(t_2) \Big\rbrace\nonumber
\\&\geq\limsup\limits_{t\rightarrow + \infty}\Big(d\big(\alpha(t_1),\alpha(t+t_2)\big)-t\Big)\nonumber
\\&\geq\limsup\limits_{t\rightarrow + \infty}\big(|t+t_2-t_1|-t\big)\geq t_2-t_1\nonumber \text{, for t large enough.}\nonumber
\end{align}
Using this last inequality in inequality (\ref{formule1}) we get $t_2-t_1-200\delta\leq h( \alpha(t_2))-h(\alpha(t_1))$. The result follows by exchanging the roles of $t_1$ and $t_2$.
\end{proof}

Using Proposition \ref{PropoQuaisLinGeodVert} with $t_1=0$ and $t_2=t$, the next corollary holds.

\begin{cor}\label{CoroGeodVertQuasiLin}
Let $\alpha$ be a vertical geodesic parametrised by arclength and such that $h(\alpha(0))=0$. We have:
\begin{equation}
\forall t\in\mathbb{R},\ |h(\alpha(t))-t| \leq 200\delta.\nonumber
\end{equation}
\end{cor}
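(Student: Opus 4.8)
The plan is to simply specialise Proposition \ref{PropoQuaisLinGeodVert} to the parameters $t_1=0$ and $t_2=t$. Since $\alpha$ is by hypothesis a vertical geodesic parametrised by arclength, Proposition \ref{PropoQuaisLinGeodVert} applies verbatim and yields, for every $t\in\mathbb{R}$,
\begin{equation}
t-200\delta\ \leq\ h\bigl(\alpha(t)\bigr)-h\bigl(\alpha(0)\bigr)\ \leq\ t+200\delta.\nonumber
\end{equation}
Using the normalisation $h(\alpha(0))=0$, the middle term is just $h(\alpha(t))$, so rearranging gives $-200\delta\leq h(\alpha(t))-t\leq 200\delta$, which is exactly the claimed bound $|h(\alpha(t))-t|\leq 200\delta$.

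There is essentially no obstacle here: the corollary is a direct transcription of the previous proposition once the base point of the parametrisation is fixed at height $0$. The only thing worth double-checking is that the hypotheses of Proposition \ref{PropoQuaisLinGeodVert} are met — namely that $\alpha$ is genuinely vertical (in the sense of Definition \ref{DefGeodVert}) and that $t$ is allowed to be negative — both of which are granted by the statement and by the fact that Proposition \ref{PropoQuaisLinGeodVert} is stated for all $t_1,t_2\in\mathbb{R}$. Hence no case distinction or further estimate is required.
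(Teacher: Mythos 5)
Your proposal is correct and is exactly the paper's argument: the corollary is obtained by applying Proposition \ref{PropoQuaisLinGeodVert} with $t_1=0$, $t_2=t$ and using the normalisation $h(\alpha(0))=0$. Nothing further is needed.
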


From now on, $H$ will be a proper, geodesic, Gromov hyperbolic, Busemann space. Hence the height function is convex along a vertical geodesic.

\begin{propr}[Prop. $12.1.5$ in p.263 of Papadopoulos \cite{Papa}]\label{PropConvBuseFonction}
Let $\delta\geq 0$ be a non negative number. Let $H$ be a proper $\delta$-hyperbolic, Busemann space. For every geodesic $\alpha$, the function $t\mapsto -h(\alpha(t))$ is convex.
\end{propr}

The Busemann hypothesis implies that the height along geodesic behaves nicely. This means that we can drop the constant $200\delta$ from Corollary \ref{CoroGeodVertQuasiLin}. It is one of the main reasons why we require our spaces to be Busemann spaces.  

\begin{propo}\label{PropoHautLin}
Let $H$ be a $\delta$-hyperbolic and Busemann space and let $V:\mathbb{R}\to H$ be a path of $H$. Then $V$ is a vertical geodesic if and only if $\exists c\in\mathbb{R}$ such that $\forall t\in\mathbb{R},\ h(V(t))=t+c$.
\end{propo}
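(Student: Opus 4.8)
The plan is to prove the two implications separately. For the easy direction, suppose $V$ is a path with $h(V(t)) = t + c$ for all $t \in \mathbb{R}$. Since the height function is $1$-Lipschitz (Proposition \ref{PropHautLipsch}), for any $t_1 < t_2$ we get $t_2 - t_1 = |h(V(t_2)) - h(V(t_1))| \leq d(V(t_1), V(t_2)) \leq \text{length}(V|_{[t_1,t_2]})$. If $V$ is parametrised by arclength (or at unit speed), the last term equals $t_2 - t_1$, forcing equality throughout; in particular $d(V(t_1),V(t_2)) = t_2 - t_1$, so $V$ is a geodesic. That it is vertical follows from $h(V(t)) = t + c \to +\infty$, i.e. criterion (1) of Corollary \ref{PropoGeodVertVersa}. (One should be slightly careful about what "path" is assumed to mean here — presumably a unit-speed path defined on all of $\mathbb{R}$ — so that the length computation is legitimate.)

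For the main direction, let $V$ be a vertical geodesic, parametrised by arclength. By Corollary \ref{CoroGeodVertQuasiLin} (after translating so that, say, $h(V(0)) = 0$, which only changes $c$), we already know $|h(V(t)) - t| \leq 200\delta$ for all $t$; the whole point is to upgrade this to an exact equality using the Busemann hypothesis. The key structural fact is Property \ref{PropConvBuseFonction}: the function $f(t) := -h(V(t))$ is convex on $\mathbb{R}$. But a convex function on $\mathbb{R}$ that stays within bounded distance of the affine function $t \mapsto -t$ must be \emph{equal} to an affine function: a convex function whose graph is sandwiched in a horizontal strip around a line of slope $-1$ cannot have any two distinct slopes in its subdifferential (if it did, convexity would push it out of the strip as $t \to \pm\infty$), hence it is affine, say $f(t) = mt + b$. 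Combining $|f(t) + t| = |h(V(t)) - t| \leq 200\delta$ with affineness forces the slope $m = -1$, so $h(V(t)) = t - b =: t + c$.

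I expect the main (though still short) obstacle to be the convex-function-in-a-strip argument, and making sure the hypotheses are deployed correctly: we need $f$ convex and \emph{finite} on all of $\mathbb{R}$ (which holds since $h$ is real-valued), we need the two-sided bound from Corollary \ref{CoroGeodVertQuasiLin} (which is where $\delta$-hyperbolicity enters, via Proposition \ref{PropHautGDLH}), and we need $V$ to be defined on all of $\mathbb{R}$ rather than a half-line — a vertical geodesic \emph{ray} need only satisfy $h(V(t)) = t + o(t)$-type control, not an exact identity, so the statement genuinely uses two-sidedness. A clean way to run the strip argument: for $s < t$, convexity gives $\frac{f(t)-f(s)}{t-s}$ nondecreasing in each variable; letting $t \to +\infty$ and $s \to -\infty$ and using $|f(u)+u|\le 200\delta$ pins every such slope to $-1$, and a convex function all of whose difference quotients equal $-1$ is exactly $t \mapsto -t + b$. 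Then unwind the definition of $f$ and $c$.
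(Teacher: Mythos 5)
Your proposal is correct and follows essentially the same route as the paper: the backward direction uses the $1$-Lipschitz height plus the arclength parametrisation to force $d(V(t_1),V(t_2))=|t_1-t_2|$ and then verticality from $h(V(t))\to+\infty$, and the forward direction combines Corollary \ref{CoroGeodVertQuasiLin} with the convexity of $t\mapsto -h(V(t))$ from Property \ref{PropConvBuseFonction}. Your "convex function trapped in a strip around a line of slope $-1$ is affine" argument is just an unpacked version of the paper's one-line observation that the bounded convex function $t\mapsto t-h(V(t))$ must be constant, so the two proofs coincide in substance.
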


\begin{proof} Let $V$ be a vertical geodesic in $H$. By Property \ref{PropConvBuseFonction} we have that $t\mapsto -h(V(t))$ is convex. Furthermore, from Corollary \ref{CoroGeodVertQuasiLin}, we get $|h(V(t))-t|\leq 200\delta$. Thereby the bounded convex function $t\mapsto t-h(V(t))$ is constant. Then there exists a real number $c$ such that $\forall t\in\mathbb{R},\ h(V(t))=t+c$.
\\We now assume that there exists a real number $c$ such that $\forall t\in\mathbb{R},\ h(V(t))=t+c$. Therefore, for all real numbers $t_1$ and $t_2$ we have $d\big(V(t_1),V(t_2)\big)\geq \Delta h\big(V(t_1),V(t_2)\big)=|t_1-t_2|$. By definition $V$ is a connected path, hence $|t_1-t_2|\geq d\big(V(t_1),V(t_2)\big)$ which implies with the previous sentence that $|t_1-t_2|=d\big(V(t_1),V(t_2)\big)$, then $V$ is a geodesic. Furthermore $\lim\limits_{t\rightarrow+\infty}h(V(t))=+\infty$, which implies by definition that $V$ is a vertical geodesic. 
\end{proof}

A metric space is called geodesically complete if all its geodesic segments can be prolonged into geodesic lines. In $H$ is geodesically complete in addition to its other assumptions, then any point of $H$ is included in a vertical geodesic line. 

\begin{propr}\label{ExistsVertGeodInx}
Let $H$ be a $\delta$-hyperbolic Busemann geodesically complete space. Then for all $x\in H$ there exists a vertical geodesic $V_x:\mathbb{R}\rightarrow H$ such that $V_x$ contains $x$  
\end{propr}
\begin{proof}
Let us consider in this proof $w\in H$ and $a\in\partial H$, from which we constructed the height $h$ of our space $H$. Then by definition we have $h_{(a,w)}=h$. Proposition 12.2.4 of \cite{Papa} ensures the existence of a geodesic ray $R_x\in a$ starting at $x$. Furthermore as $H$ is geodesically complete $R_x$ can be prolonged into a geodesic $V_x:\mathbb{R}\rightarrow H$ such that $V_x([0;+\infty[)\in a$, hence $V_x$ is a vertical geodesic.
\end{proof}

\section{Horospherical products}\label{SectHoroDefn}

In this part we generalise the definition of horospherical product, as seen in \cite{EFW1} for two trees or two hyperbolic planes, to any pair of proper, geodesically complete, Gromov hyperbolic, Busemann spaces. We recall that given a proper, $\delta$-hyperbolic space $H$ with distinguished $a\in\partial H$ and $w\in H$, we defined the height function on $H$ in Section \ref{SecHeight} from the Busemann functions with respect to $a$ and $w$.

\subsection{Definitions}

Let $X$ and $Y$ be two $\delta-$hyperbolic spaces. We fix the base points $w_X\in X,\ w_Y\in Y$ and the directions in the boundaries $a_X\in\partial X,\ a_Y\in\partial Y$. We consider their heights functions $h_X$ and $h_Y$ respectively on $X$ and $Y$.

\begin{defn}[Horospherical product]\label{DefHoro}
The horospherical product of $X$ and $Y$, denoted by $X\bowtie Y= X\bowtie Y $ is
\begin{equation}
 X\bowtie Y :=\big\lbrace (p_X,p_Y)\in X\times Y\ | h_X (p_X)+h_Y (p_Y)=0 \big\rbrace.\nonumber
\end{equation}
\end{defn}

From now on, with slight abuse, we omit the base points and fixed points on the boundary in the construction of the horospherical product. The metric space $ X\bowtie Y $ refers to a horospherical product of two Gromov hyperbolic Busemann spaces.  We choose to denote $X$ and $Y$ the two components in order to identify easily which objects are in which component. In order to define a Horospherical product in a wider settings, one might only a Busemann function on a metric space.
\\One of our goals is to understand the shape of geodesics in $ X\bowtie Y $ according to a given distance on it. In a cartesian product the chosen distance changes the behaviour of geodesics. However we show that in a horopsherical product the shape of geodesics does not change for a large family of distances, up to an additive constant. 
\\\\We will define the distances on $X\bowtie Y= X\bowtie Y $ as length path metrics induced by distances on $X\times Y$. A lot of natural distances on the cartesian product $X\times Y$ come from norms on the vector space $\mathbb{R}^2$. Let $N$ be such a norm and let us denote $d_N:=N(d_{X},d_{Y})$, which means that for all couples $(p_X,p_Y),(q_X,q_Y)\in X\times Y$ we have that $d_N\big((p_X,p_Y),(q_X,q_Y)\big)=N\big(d_{X}(p_X,q_X),d_{Y}(p_Y,q_Y)\big)$. The length $l_N(\gamma)$ of a path $\gamma=(\gamma_X,\gamma_Y)$ in the metric space $\Big(X\times Y,d_N\Big)$ is defined by:
\begin{equation}
l_N(\gamma)=\sup\limits_{\theta\in\Theta([t_1,t_2])}\left(\sum\limits_{i=1}^{n_{\theta}-1}d_{N}(\gamma(\theta_i),\gamma(\theta_{i+1}))  \right).\nonumber
\end{equation}
Where $\Theta([t_1,t_2])$ is the set of subdivisions of $[t_1,t_2]$. Then the $N$-path metrics on $ X\bowtie Y $ is:

\begin{defn}[The $N$-path metrics on $ X\bowtie Y $]\label{DefNDistHoro}
Let $N$ be a norm on the vector space $\mathbb{R}^2$. The $N$-path metric on $X\bowtie Y$, denoted by $d_{\bowtie}$, is the length path metric induced by the distance $N(d_{X},d_{Y})$ on $X \times Y$. For all $p$ and $q$ in $ X\bowtie Y $ we have:
\begin{align}
d_{\bowtie}(p,q)=\inf\lbrace l_N(\gamma)\vert \gamma\ path\ in\  X\bowtie Y \ linking\ p\ to\ q\rbrace.
\end{align}
\end{defn}

Any norm $N$ on $\mathbb{R}^2$ can be normalised such that $N(1,1)=1$. We call admissible any such norm which satisfies an additional condition. 

\begin{defn}[Admissible norm]\label{DefDistHoro}
Let $N$ be a norm on the vector space $\mathbb{R}^2$ such that $N(1,1)=1$. The norm $N$ is called admissible if and only if for all real $a$ and $b$ we have:
\begin{equation}
N(a,b)\geq \frac{a+b}{2}.
\end{equation}
Since all norms are equivalent in $\mathbb{R}^2$, there exists a constant $C_N\geq 1$ such that:
\begin{equation}
N(a,b)\leq C_N\frac{a+b}{2}.
\end{equation}
\end{defn}

As an example, any $l_p$ norm with $p\geq 1$ is admissible.

\begin{propr}\label{ProprSplitLenght}
Let $N$ be an admissible norm on the vector space $\mathbb{R}^2$. Let $\gamma:=(\gamma_X,\gamma_Y)\subset X\times Y$ be a connected path. Then we have:
\begin{equation}
\frac{l_{X}(\gamma_X)+l_{Y}(\gamma_Y)}{2}\leq l_N(\gamma)\leq C_N\frac{l_{X}(\gamma_X)+l_{Y}(\gamma_Y)}{2}.\nonumber
\end{equation}
\end{propr}

\begin{proof}
Let $\gamma:=(\gamma_X,\gamma_Y):[t_1,t_2]\rightarrow X\times Y$ be a connected path and $\theta$ a subdivision of $[t_1,t_2]$, then by the definition of the length:
\begin{align*}
l_N(\gamma)&\geq\sum\limits_{i=1}^{n_{\theta}-1}d_{N}(\gamma(\theta_i),\gamma(\theta_{i+1}))=\sum\limits_{i=1}^{n_{\theta}-1}N\Big(d_{X}\big(\gamma_X(\theta_i),\gamma_X(\theta_{i+1})\big),d_{ Y}\big(\gamma_Y(\theta_i),\gamma_Y(\theta_{i+1})\big)\Big)
\\&\geq\sum\limits_{i=1}^{n_{\theta}-1}\frac{1}{2}\Big(d_{X}\big(\gamma_X(\theta_i),\gamma_X(\theta_{i+1})\big)+d_{ Y}\big(\gamma_Y(\theta_i),\gamma_Y(\theta_{i+1})\big)\Big),\ \text{since }N\text{ is admissible}.
\\&\geq\frac{1}{2}\left(\sum\limits_{i=1}^{n_{\theta}-1}d_{X}\big(\gamma_X(\theta_i),\gamma_X(\theta_{i+1})\big)+\sum\limits_{i=1}^{n_{\theta}-1}d_{ Y}\big(\gamma_Y(\theta_i),\gamma_Y(\theta_{i+1})\big)\right).
\end{align*}
Any couple of subdivision $\theta_1$ and $\theta_2$ can be merge into a subdivision $\theta$ that contains $\theta_1$ and $\theta_2$. Furthermore the last inequality holds for any subdivision $\theta$, hence by taking the supremum on all the subdivisions we have:
\begin{equation}
l_N(\gamma)\geq \frac{l_{X}(\gamma_X)+l_{Y}(\gamma_Y)}{2}.\nonumber
\end{equation}
Furthermore, we have that $\forall a,b\in\mathbb{R}$, $N(a,b)\leq C_N\frac{a+b}{2}$, hence:
\begin{align*}
\sum\limits_{i=1}^{n_{\theta}-1}d_{N}(\gamma(\theta_i),\gamma(\theta_{i+1}))&\leq \frac{C_N}{2}\left(\sum\limits_{i=1}^{n_{\theta}-1}d_{X}(\gamma_X(\theta_i),\gamma(\theta_{i+1}))+\sum\limits_{i=1}^{n_{\theta}-1}d_{Y}(\gamma_Y(\theta_i),\gamma_Y(\theta_{i+1}))\right)
\\&\leq C_N\frac{l_{X}(\gamma_X)+l_{X}(\gamma_X)}{2}
\end{align*} 
Since last inequality holds for any subdivision $\theta$, we have that $l_N(\gamma)\leq C_N\frac{l_{X}(\gamma_X)+l_{X}(\gamma_X)}{2}$.

\end{proof}

The definition of height on $X$ and $Y$ is used to construct a height function on $X\bowtie Y$.
 
\begin{defn}[Height on $ X\bowtie Y $]
The height $h(p)$ of a point $p=(p_X,p_Y)\in X\bowtie Y$ is defined as $h(p)=h_X(p_X)=-h_Y(p_Y)$.
\end{defn}

On Gromov hyperbolic spaces we have that de distance between two points is greater than their height difference. The same occurs on horospherical products given with an admissible norm. Let $x$ and $y$ be two points of $ X\bowtie Y $, and let us denote $\Delta h(p,q) := |h(p)-h(q)|$ their height difference.

\begin{lemma}\label{LemDistBigHaut}
Let $N$ be an admissible norm, and let $d_{\bowtie}$ the distance on $X\bowtie Y$ induced by $N$. Then the height function is $1$-Lipschitz with respect to the distance $d_{\bowtie}$, \textsl{i.e.},
\begin{equation}
\forall p,q \in X\bowtie Y ,\quad d_{\bowtie}(p,q)\geq \Delta h(p,q).
\end{equation}
\end{lemma}

\begin{proof}
Since $N$ is admissible we have:
\begin{align*}
d_{\bowtie}(p,q)&\geq \frac{d_{X}(p_X,q_X)+d_{Y}(p_Y,q_Y)}{2}\geq \frac{\Delta h(p_X,q_X)+\Delta h(p_Y,q_Y)}{2}
\\&=\Delta h(p_X,q_X)=\Delta h(p,q).
\end{align*}
\end{proof}

Following Proposition \ref{PropoHautLin}, we define a notion of vertical paths in a horospherical product.
 
\begin{defn}[Vertical paths in $ X\bowtie Y $]\label{DefVertGeodInHoro}
Let $V:\mathbb{R}\to  X\bowtie Y $ be a connected path. We say that $V$ is vertical if and only if there exists a parametrisation by arclength of $V$ such that $h(V(t))=t$ for all $t$.
\end{defn}

Actually, a vertical path of a horospherical product is a geodesic.

\begin{lemma}
Let $N$ be an admissible norm. Let $V:\mathbb{R}\to  X\bowtie Y $ be a vertical path. Then $V$ is a geodesic of $( X\bowtie Y ,d_{\bowtie})$.
\end{lemma}
\begin{proof}
Let $t_1,t_2\in\mathbb{R}$. The path $V$ is vertical therefore $\Delta h\big(V(t_1),V(t_2)\big)=|t_1-t_2|$. Since $V$ is connected and parametrised by arclength, we have that:
\begin{align*}
|t_1-t_2|=l_N\left(V_{|[t_1,t_2]}\right)&\geq d_{\bowtie}\big(V(t_1),V(t_2)\big)
\\&\geq  \Delta h\big(V(t_1),V(t_2)\big)=|t_1-t_2|.
\end{align*}
Then $d_{\bowtie}\big(V(t_1),V(t_2)\big)=|t_1-t_2|$, which ends the proof.
\end{proof}

Such geodesics are called vertical geodesics. Next proposition tells us that vertical geodesics of $X\bowtie Y$ are exactly couples of vertical geodesics of $X$ and $Y$.

\begin{propo}\label{PropGeodVertInHoro}
Let $N$ be an admissible norm and let $V=(V_X,V_Y):\mathbb{R}\to  X\bowtie Y $ be a geodesic of $( X\bowtie Y ,d_{\bowtie})$. The two following properties are equivalent:
\begin{enumerate}
\item $V$ is a vertical geodesic of $( X\bowtie Y ,d_{\bowtie})$
\item $V_X$ and $V_Y$ are respectively vertical geodesics of $X$ and $Y$.
\end{enumerate}
\end{propo}
\begin{proof}\ 
Let us first assume that $V$ be a vertical geodesic, we have for all real $t$ that $h(V_X(t))=h(V(t))=t$, hence $\forall t_1,t_2\in\mathbb{R}$:
\begin{align}
d_{X}\big(V_X(t_1),V_X(t_2)\big)\geq \Delta h \big(V_X(t_1),V_X(t_2)\big)=|t_1-t_2|.\label{lowboundprojgeod}
\end{align}
Similarly we have that $d_{Y}\big(V_Y(t_1),V_Y(t_2)\big)\geq |t_1-t_2|$. Using that $N$ is admissible and that $V$ is a geodesic we have:
\begin{align*}
d_{X}\big(V_X(t_1),V_X(t_2)\big)&=2\frac{d_{X}\big(V_X(t_1),V_X(t_2)\big)+d_{Y}\big(V_Y(t_1),V_Y(t_2)\big)}{2}-d_{Y}\big(V_Y(t_1),V_Y(t_2)\big)
\\&\leq 2d_{\bowtie}\big(V(t_1),V(t_2)\big)-|t_1-t_2|=|t_1-t_2|.
\end{align*}
Combine with inequality (\ref{lowboundprojgeod}) we have that $d_{X}\big(V_X(t_1),V_X(t_2)\big)=|t_1-t_2|$, hence $V_X$ is a vertical geodesic of $X$. Similarly, $V_Y$ is a vertical geodesic $Y$.
\\Let us assume that $V_X$ and $V_Y$ are vertical geodesics of $X$ and $Y$. Let $t_1,t_2\in\mathbb{R}$, we have:
\begin{align*}
d_{\bowtie}(V(t_1),V(t_2))&=\sup\limits_{\theta\in\Theta ([t_1,t_2])} \left( \sum\limits_{i=1}^{n_{\theta}-1}d_{N}(V(\theta_i),V(\theta_{i+1}))\right)
\\&=\sup\limits_{\theta\in\Theta ([t_1,t_2])} \left( \sum\limits_{i=1}^{n_{\theta}-1}N\Big(d_{X}\big(V_X(\theta_i),V_X(\theta_{i+1})\big),d_{Y}\big(V_Y(\theta_i),V_Y(\theta_{i+1})\big)\Big)\right)
\\&=\sup\limits_{\theta\in\Theta ([t_1,t_2])} \left( \sum\limits_{i=1}^{n_{\theta}-1}N\Big(\Delta h\big(V_X(\theta_i),V_X(\theta_{i+1})\big),\Delta h\big(V_Y(\theta_i),V_Y(\theta_{i+1})\big)\Big)\right)
\\&=\sup\limits_{\theta\in\Theta ([t_1,t_2])} \left( N(1,1)\sum\limits_{i=1}^{n_{\theta}-1}\Delta h\big(V_X(\theta_i),V_X(\theta_{i+1})\big)\right)
\\&=N(1,1)\Delta h\big(V_X(t_1),V_X(t_2)\big)=|t_1-t_2|,\text{ since }N(1,1)=1.
\end{align*}
Where $\Theta ([t_1,t_2])$ is the set of subdivision of $[t_1,t_2]$. Hence the proposition is proved.
\end{proof}
This previous result is the main reason why we are working with distances which came from admissible norms.

\begin{defn}
A geodesic ray of $X\bowtie Y$ is called vertical if it is a subset of a vertical geodesic.
\end{defn}

A metric space is called geodesically complete if all its geodesic segments can be prolonged into geodesic lines. If $X$ and $Y$ are proper hyperbolic geodesically complete Busemann spaces, their horospherical product $ X\bowtie Y $ is connected.

\begin{propr}\label{HoroProdConnected}
Let $X$ and $Y$ be two proper, geodesically complete, $\delta$-hyperbolic, Busemann spaces. Let $X\bowtie Y$ be their horospherical product. Then $ X\bowtie Y $ is connected, furthermore $\frac{1}{2}(d_{X}+d_{Y})\leq d_{ X\bowtie Y }\leq 2C_N(d_{X}+d_{Y})$.
\end{propr}

\begin{proof}
Let $p=(p_X,p_Y)$ and $q=(q_X,q_Y)$ be two points of $ X\bowtie Y $. From Property \ref{ExistsVertGeodInx}, there exists a vertical geodesic $V_{p_Y}$ such that $p_Y$ is in the image of $V_{p_Y}$, and there exists a vertical geodesic $V_{q_X}$ such that $q_X$ is in the image of $V_{q_X}$. Let $q_Y'$ be the point of $V_{p_Y}$ at height $h(q_Y)$. Let $\alpha_X$ be a geodesic of $X$ linking $p_X$ to $q_X$ and let $\alpha_Y'$ be a geodesic of $Y$ linking $q_Y'$ to $q_Y$. We will connect $x$ to $y$ with a path composed with pieces of $\alpha_X$, $\alpha_Y'$, $V_{p_Y}$ and $V_{q_X}$. 
\\We first link $(p_X,p_Y)$ to $(q_X,q_Y')$ with $\alpha_X$ and $V_{p_Y}$. It is possible since $V_{p_Y}$ is parametrised by its height. More precisely we construct the following path $c_1$:
\begin{align*}
\forall t\in[0,d(p_X,q_X)],\ c_1(t)=\Big(\alpha_X(t), V_{p_Y}\big( -h(\alpha_X(t))\big)\Big).
\end{align*}
Since $V_{p_Y}$ is parametrised by its height, we have $h\left( V_{p_Y}\big( -h(\alpha_X(t))\big)\right)=-h(\alpha_X(t))$ which implies $c_1(t)\in X\bowtie Y $. Furthermore, using the fact that the height is 1-Lipschitz, we have $\forall t_1,t_2\in[0,d(p_X,q_X)]$:
\begin{align*}
d_{Y}\Big(V_{p_Y}\big( -h(\alpha_X(t_1))\big),V_{p_Y}\big( -h(\alpha_X(t_2))\big)\Big)=| h(\alpha_X(t_1))-h(\alpha_X(t_2))|\leq d_{X}(\alpha_X(t_1),\alpha_X(t_2)).
\end{align*}
Hence $c_{1,Y}:t\mapsto V_{p_Y}\big( -h(\alpha_X(t))\big)$ is a connected path such that $l(c_{1,Y})\leq l(\alpha_X)\leq d_{X}(p_X,q_X)$. Hence $c_1$ is a connected path linking $(p_X,p_Y)$ to $(q_X,q_Y')$. Using 
Property \ref{ProprSplitLenght} on $c_1$ provides us with:
\begin{align*}
l_N(c_1)&\leq \frac{C_N}{2} (l(c_{1,Y})+l(\alpha_X))\leq C_N l(\alpha_X)
\\&\leq C_N d_{X}(p_X,q_X)
\end{align*}
We recall that by definition $q_Y'=V_{p_Y}(h(q_Y))$.
We show similarly that $c_{2}:t\mapsto \Big(V_{q_X}\big( -h(\alpha_Y'(t))\big),\alpha_Y'(t) \Big)$ is a connected path linking $(q_X,q_Y')$ to $(q_X,q_Y)$ such that:
\begin{align*}
l(c_2)&\leq C_N d_{Y}(q_Y',q_Y)\leq C_N\big(d_{Y}(q_Y',p_Y)+d_{Y}(p_Y,q_Y)\big)
\\&= C_N\big(\Delta h(p_Y,q_Y)+d_{Y}(p_Y,q_Y)\big)\text{, since }q_Y'=V_{p_Y}(h(q_Y))
\\&\leq 2C_N d_{Y}(p_Y,q_Y).
\end{align*}
Hence, there exists a connected path $c=c_1\cup c_2$ linking $p$ to $q$ such that:
\begin{equation}
l(c)\leq C_N d_{X}(p_X,q_X)+2C_N d_{Y}(p_Y,q_Y)\leq  2C_N\big(d_{X}(p_X,q_X)+d_{Y}(p_Y,q_Y)\big).
\end{equation} 
\end{proof}

However if the two components $X$ and $Y$ are not geodesically complete, $ X\bowtie Y $ may not be connected. 

\begin{example}
Let $X$ and $Y$ be two graphs, constructed from an infinite line $\mathbb{Z}$ (indexed by $\mathbb{Z}$) with an additional vertex glued on the $0$ for $X$ and on the $-2$ for $Y$. Their construction are illustrated in Figure \ref{FigUnconnectedHoroProd}. They are two 0-hyperbolic Busemann spaces which are not geodesically complete. Let $w_{X}\in X$ be the vertex indexed by $0$ in $X$, and let $w_{Y}\in Y$ be the vertex indexed by $-2$ in $Y$. We choose them to be the base points of $X$ and $Y$. Since $\partial X$ and $\partial Y$ contain two points each, we fix in both cases the point of the boundary $a_X$ or $a_Y$ to be the one that contains the geodesic ray indexed by $\mathbb{N}$. On figure \ref{FigUnconnectedHoroProd}, we denoted the height of a vertex inside this one. Then the horospherical product $X\bowtie Y$ taken with the $\ell_1$ path metric is not connected. Since some vertices of $X$ and $Y$ are not contained in a vertical geodesic, one may not be able to adapt its height correctly while constructing a path joining $\left(p^X_{-1},p^Y_{(2,1)}\right)$ to $\left(p^X_{(0,-1)},p^Y_{(2,1)}\right)$.
\end{example}

\begin{figure}
\includegraphics[scale=0.8]{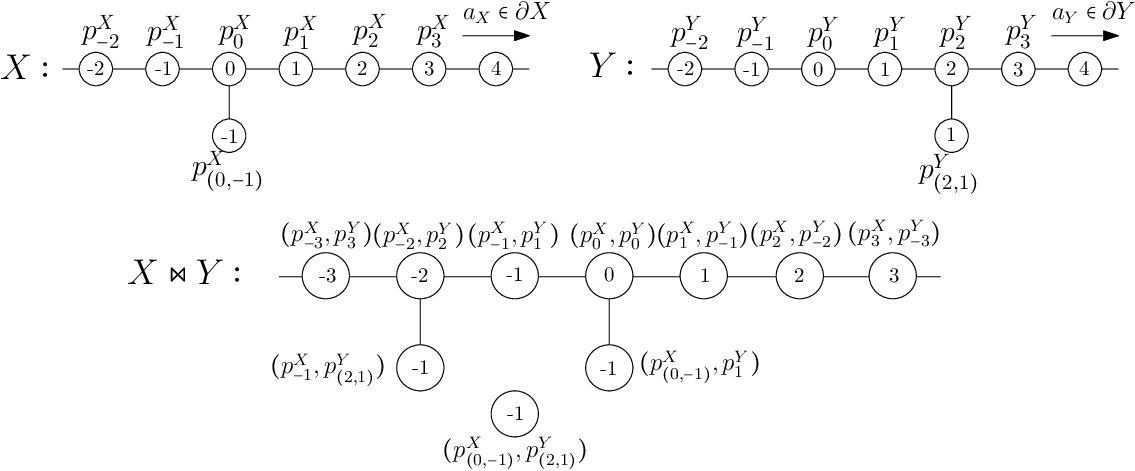} 
\centering
\caption{Example of horospherical product which is not connected. The number in a vertex is the height of that vertex.}
\label{FigUnconnectedHoroProd}
\end{figure}

It is not clear that a horospherical product is still connected without the hypothesis that $X$ and $Y$ are Busemann spaces. In that case we would need a "coarse" definition of horospherical product. Indeed, the height along geodesics would not be smooth as in Proposition \ref{PropoHautLin}, therefore the condition requiring to have two exact opposite heights would not suits.

\subsection{Examples}

A \textbf{Heintze group} is a Lie group of the form $\mathbb{R}\ltimes_A N$ defined by the action on $\mathbb{R}$, $t\mapsto \exp(tA)$, with $N$ a simply connected nilpotent Lie group and with $A\in \mathrm{Lie}(A)$ a derivation whose eigenvalues have positive real parts. Heintze proved in \cite{Heintze} that any simply connected, negatively curved Lie group is isomorphic to a Heintze group.

Moreover, a Busemann metric space is simply connected, hence any Gromov hyperbolic, Busemann Lie group is isomorphic to a Heintze group. Consequently, Heintze groups are natural candidates for the two components from which a horospherical product is constructed. In his paper \cite{Xie}, Xie classifies the subfamily of all negatively curved Lie groups $\mathbb{R}\ltimes \mathbb{R}^n$ up to quasi-isometry.

Let $H_1:=\mathbb{R}\ltimes_{A_1}N_1$ and $H_2:=\mathbb{R}\ltimes_{A_2}N_2$ be two Heintze groups, then $H_1 \bowtie H_2$ is isomorphic to $\mathbb{R}\ltimes_{\mathrm{Diag}(A_1,-A_2)} (N_1\times N_2)$, where $\mathrm{Diag}(A_1,-A_2)$ is the block diagonal matrix containing $A_1$ and $-A_2$ on its diagonal. In fact, We have that $H_1 \times H_2$ is the group $\mathbb{R}^2\ltimes_{(A_1,A_2)} (N_1\times N_2)$ defined by the action on $\mathbb{R}^2$, $(t_1,t_2)\mapsto (\exp(t_1 A_1),\exp(t_2 A_2))$. Let $(0,e_{N_1})\in N_1$, $(0,e_{N_2})\in N_2$ be the two base points, and let $t\mapsto (t,e_{N_1})$ and  $t\mapsto (t,e_{N_2})$ be there respective vertical geodesic rays corresponding to the chosen Busemann functions. Then we have that for all $(t,n)\in H_i$, $h(t,n)=t$. Under this setting we have that 
\begin{align*}
H_1\bowtie H_2 &= \left\lbrace(t_1,t_2,n_1,n_2)\in H_1\times H_2\mid t_1=-t_2\right\rbrace= \left\lbrace(t,-t,n_1,n_2)\in H_1\times H_2\right\rbrace.
\end{align*}
Thanks to this characterisation, we show that $H_1\bowtie H_2$ is a subgroup of $\mathbb{R}^2\ltimes_{(A_1,A_2)} (N_1\times N_2)$. Furthermore the following map is an isomorphism
\begin{align*}
H_1\bowtie H_2 &\to \mathbb{R}\ltimes_{\mathrm{Diag}(A_1,-A_2)} (N_1\times N_2)
\\(t,-t,n_1,n_2)&\mapsto (t,n_1,n_2),
\end{align*}
where  $\mathbb{R}\ltimes_{\mathrm{Diag}(A_1,-A_2)} (N_1\times N_2)$ is determined by the action $t\mapsto (\exp(t A_1),\exp(-t A_2))$. Therefore, we have that
\begin{align*}
(\mathbb{R}\ltimes_{A_1}N_1)\bowtie (\mathbb{R}\ltimes_{A_2}N_2) \cong_{iso} \mathbb{R}\ltimes_{\mathrm{Diag}(A_1,-A_2)} (N_1\times N_2)
\end{align*}
The Sol geometries are specific cases of such solvable Lie groups when $N_i=\mathbb{R}$ for $i\in \{1,2\}$, and where the matrices $A_i$ are positive reals. In this context, for $m>0$ we have that $\mathbb{R}\ltimes_m \mathbb{R}$ is the Log model of a real hyperbolic plan, otherwise stated the Riemannian manifold with coordinates $(x,z)\in\mathbb{R}^2$ endowed with the Riemannian metric $ds^2=e^{-2mz}dx^2+dz^2$. Then $(\mathbb{R}\ltimes_m \mathbb{R})\bowtie (\mathbb{R}\ltimes_n \mathbb{R}) = \mathbb{R}\ltimes_{\mathrm{Diag}(m,-n)} \mathbb{R}^{2}$ is a Sol geometry, or also the Riemannian manifold with coordinates $(x_1,x_2,z)\in\mathbb{R}^3$ endowed with the Riemannian metric
\begin{equation}
ds^2=e^{-2mz}dx_1^2+e^{2nz}dx_2^2+dz^2.\nonumber
\end{equation}
A first discrete example of horospherical product is the family of Diestel-Leader graphs defined by $DL(n,m)=T_n\bowtie T_m$ with $n,m\geq 2$ and where $T_n$ and $T_m$ are regular trees. We see $T_n$ and $T_m$ as connected metric spaces with the usual distance on them. By choosing half of the $\ell_1$ path metric on $DL(n,m)$, this horospherical product becomes a graph with the natural distance on it. Indeed, the set of vertices of $DL(n,m)$ is then defined by the subset of couples of vertices of $T_n\times T_m$ included in $DL(n,m)$. In this horospherical product, two points $(p_n,p_m)$ and $(q_n,q_m)$ of $DL(n,m)$ are connected by an edge if and only if $p_n$ and $q_n$ are connected by an edge in $T_n$ and if $p_m$ and $q_m$ are connected by an edge in $T_m$. Furthermore, when $n=m$, there is a one-to-one correspondence between $DL(n,n)$ and the Cayley graph of the lamplighter group $\mathbb{Z}_Y\wr \mathbb{Z}$, see \cite{Woess2} for further details.
\begin{figure}[h!]
    \centering
    \begin{minipage}{0.45\textwidth}
        \centering
        \includegraphics[scale=0.90]{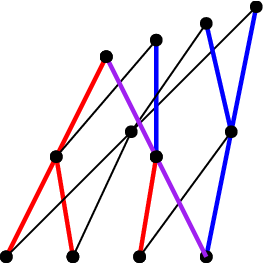}
        \caption{A portion of the graph $\mathbb{T}_3\bowtie\mathbb{T}_3$ }
    \end{minipage}\hfill
    \begin{minipage}{0.45\textwidth}
        \centering
        \includegraphics[scale=0.7]{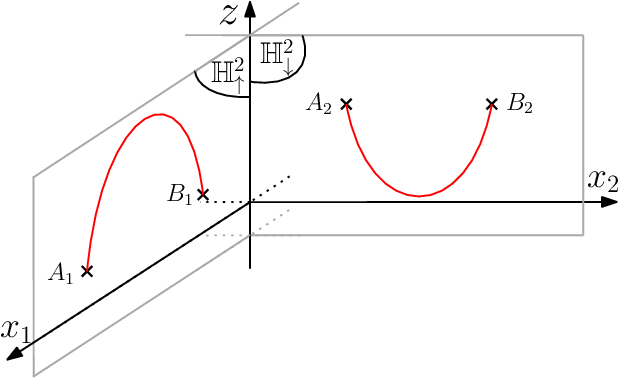} 
        \caption{The Sol geometry and two geodesics of embedded copies of $\mathbb{H}^2$}
    \end{minipage}
\end{figure}

Depending on the case, we either used the $\ell_1$ path metric or the $\ell_2$ path metric. However, we will see in Proposition \ref{l1samel2} that it does not matter, up to an additive uniform constant. Quasi-isometric rigidity results in the Diestel-Leader graphs and the Sol geometry have been proved using the same techniques in \cite{EFW1} and \cite{EFW2}.

The horospherical product of a hyperbolic plane and a regular tree has been studied as the 2-complex of Baumslag-Solitar groups in \cite{Treebol}, they are called the treebolic spaces. The distance they choose on the treebolic spaces is similar to ours. In fact our Proposition \ref{lengthGeod} and their Proposition $2.8$ page 9 (in \cite{Treebol}) tell us they are equal up to an additive constant. Rigidity results on the quasi-isometry classification of the treebolic spaces were brought up in \cite{FB1} and \cite{FB2}.

\section{Estimates on the length of specific paths}\label{SecMetricEstimat}
\subsection{Geodesics in Gromov hyperbolic Busemann spaces}

This section focuses on length estimates in Gromov hyperbolic Busemann spaces. The central result is Proposition \ref{ExpLengthWhenBelowAndReachPoint}, which presents a lower bound on the length of a path staying between two horospheres. Before moving to the technical results of this section, let us introduce some notations.

\begin{nota}
Unless otherwise specified, $H$ will be a Gromov hyperbolic Busemann geodesically complete proper space. Let $\gamma:I\rightarrow H$ be a connected path. Let us denote the maximal height and the minimal height of this path as follows:
\begin{align*}
h^+(\gamma)=\sup\limits_{t\in I}\big\lbrace h(\gamma(t))\big\rbrace\quad;\quad h^-(\gamma)=\inf\limits_{t\in I}\big\lbrace h(\gamma(t))\big\rbrace.
\end{align*}
Let $x$ and $y$ be two points of $H$, we denote the height difference between them by:
\begin{equation}
\Delta h(x,y)=|h(x)-h(y)|.\nonumber
\end{equation}
We define the relative distance between two points $x$ and $y$ of $H$ as:
\begin{equation}
d_r(x,y)=d(x,y)-\Delta h(x,y).\nonumber
\end{equation}

Let us denote $V_x$ a vertical geodesic containing $x$, we will assume it to be parametrised by arclength. Thanks to Proposition \ref{PropoHautLin} we choose a parametrisation by arclength such that $\forall t\in\mathbb{R},\ h(V_x(t))=t+0$.
\end{nota}

The relative distance between two points quantifies how far a point is from the nearest vertical geodesic containing the other point. 
\\\\In the sequel we want to apply the slim triangles property on ideal triangles, hence we need the following result of \cite{Papa1}.

\begin{propr}[Proposition $2.2$ page $19$ of \cite{Papa1}]\label{ThinTrianglePropr}
Let $a,b$ and $c$ be three points of $X\cup\partial X$. Let $\alpha,\beta,\gamma$ be three geodesics of $X$ linking respectively $b$ to $c$, $c$ to $a$, and $a$ to $b$. Then every point of $\alpha$ is at distance less than $24\delta$ from the union $\beta\cup\gamma$. 
\end{propr}

Next lemma tells us that in order to connect two points, a geodesic needs to go sufficiently high. This height is controlled by the relative distance between these two points. 

\begin{lemma}\label{LEM0}
Let $H$ be a $\delta$-hyperbolic and Busemann metric space, let $x$ and $y$ be two elements of $H$ such that $h(x)\leq h(y)$, and let $\alpha$ be a geodesic linking $x$ to $y$. Let us denote $z=\alpha\left(\Delta h(x,y)+\frac{1}{2}d_r(x,y)\right)$, $x_1:=V_x\left(h(y)+\frac{1}{2}d_r(x,y)\right)$ the point of $V_x$ at height $h(y)+\frac{1}{2}d_r(x,y)$ and $y_1:=V_y\left(h(y)+\frac{1}{2}d_r(x,y)\right)$ the point of $V_y$ at the same height $h(y)+\frac{1}{2}d_r(x,y)$. Then we have:
\begin{enumerate}
\item $h^+(\alpha)\geq h(y)+\frac{1}{2}d_r(x,y)-96\delta$
\item $d\left(z,x_1\right)\leq 144\delta$
\item $d\left(z,y_1\right)\leq 144\delta$
\item $d\left(x_1,y_1\right)\leq 288\delta$.
\end{enumerate}
\end{lemma}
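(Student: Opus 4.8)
The plan is to recognise that the height is, up to a bounded error, the distance towards the boundary point $a$ used to define it, so the natural object is the ideal geodesic triangle with vertices $x$, $y$ and $a$. Write $h_{0}:=h(y)+\tfrac12 d_{r}(x,y)$. Since $d_{r}(x,y)\geq 0$ (Proposition \ref{PropHautLipsch}) and $h(x)\leq h(y)$, we get $h_{0}\geq h(y)\geq h(x)$, so $x_{1}=V_{x}(h_{0})$ and $y_{1}=V_{y}(h_{0})$ lie on the upward rays $R_{x}:=V_{x}|_{[h(x),+\infty[}$ and $R_{y}:=V_{y}|_{[h(y),+\infty[}$ of the fixed vertical geodesics, and on these rays the metric equals the height difference: $d(x,x_{1})=h_{0}-h(x)=\Delta h(x,y)+\tfrac12 d_{r}(x,y)$ and $d(y,y_{1})=h_{0}-h(y)=\tfrac12 d_{r}(x,y)$. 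Since $d(x,y)=\Delta h(x,y)+d_{r}(x,y)$, the point $z$ satisfies $d(x,z)=\Delta h(x,y)+\tfrac12 d_{r}(x,y)$ and $d(z,y)=\tfrac12 d_{r}(x,y)$; in particular $h(x)+d(x,z)=h(y)+d(z,y)=h_{0}$, the very quantity that locates $x_{1}$ and $y_{1}$ on their rays.

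First I would prove item 1. The three geodesics $\alpha$, $R_{x}$, $R_{y}$ join the points $x,y,a\in H\cup\partial H$ pairwise, so Property \ref{ThinTrianglePropr} applies: every point of one side is within $24\delta$ of the union of the other two. Applying this to $z\in\alpha$ yields a point $p\in R_{x}\cup R_{y}$ with $d(z,p)\leq 24\delta$. If $p\in R_{x}$ then $h(p)=h(x)+d(x,p)\geq h(x)+d(x,z)-24\delta=h_{0}-24\delta$; if $p\in R_{y}$ the same computation from $y$ gives $h(p)\geq h_{0}-24\delta$. In either case $h(z)\geq h(p)-24\delta\geq h_{0}-48\delta$, so $h^{+}(\alpha)\geq h(z)\geq h(y)+\tfrac12 d_{r}(x,y)-96\delta$, which is item 1.

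For items 2 and 3 I would apply Property \ref{ThinTrianglePropr} to $x_{1}\in R_{x}$ (and symmetrically to $y_{1}\in R_{y}$), producing a point $p'$ with $d(x_{1},p')\leq 24\delta$ lying either on $\alpha$ or on $R_{y}$. If $p'=\alpha(s)$, then $|s-d(x,x_{1})|=|d(x,\alpha(s))-d(x,x_{1})|\leq d(\alpha(s),x_{1})\leq 24\delta$; since $z=\alpha(d(x,x_{1}))$ this gives $d(z,\alpha(s))\leq 24\delta$ and hence $d(z,x_{1})\leq 48\delta$. If instead $p'\in R_{y}$, then $|h(p')-h_{0}|\leq 24\delta$, so $d(p',y_{1})\leq 24\delta$ along $R_{y}$; combining with the location of $z$ from the previous paragraph (a point of $R_{x}\cup R_{y}$ at height within $24\delta$ of $h_{0}$) and using that on a vertical ray distance equals height difference, a three-term inequality $24\delta+48\delta+24\delta$ bounds $d(z,x_{1})\leq 96\delta$, the worst case being when both $z$ and $x_{1}$ fall near $R_{y}$. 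Thus $d(z,x_{1})\leq 144\delta$, and the mirror argument gives $d(z,y_{1})\leq 144\delta$. Item 4 is then immediate from the triangle inequality: $d(x_{1},y_{1})\leq d(x_{1},z)+d(z,y_{1})\leq 288\delta$.

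The only genuinely new idea is taking $a$ itself as the third vertex, which turns $V_{x},V_{y}$ into sides of an ideal triangle on which the metric and the height coincide; after that the proof is triangle-inequality bookkeeping. The fiddliest point, and the source of the largest constants, is the case in items 2--3 where the thin-triangle property sends $x_{1}$ (resp. $y_{1}$) onto a vertical ray rather than onto $\alpha$, forcing the detour from $z$ to that vertical ray and back to $x_{1}$ along it.
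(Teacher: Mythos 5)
Your proof is correct, and it rests on the same key ingredient as the paper's: the $24\delta$-thin ideal triangle with vertices $x$, $y$, $a$ whose sides are $\alpha$ and the upward vertical rays through $x$ and $y$ (Property \ref{ThinTrianglePropr}). The mechanics differ, though. The paper first produces, by a connectedness argument on $[0,d(x,y)]$, a single time $t_0$ at which $\alpha(t_0)$ is simultaneously $24\delta$-close to \emph{both} vertical geodesics, then shows $t_0$ is within $48\delta$ of the parameter of $z$ and transfers all four estimates from that one centre point. You instead apply thinness separately to the three named points $z$, $x_1$, $y_1$ and handle by cases the possibility that a point projects onto the ``wrong'' side, using that along a vertical ray distance equals height difference and that $d(x,z)=d(x,x_1)$, $d(y,z)=d(y,y_1)$. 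This avoids the intermediate-value step entirely, is a bit shorter, and even yields sharper constants ($48\delta$, $96\delta$, $96\delta$, $192\delta$ in place of $96\delta$, $144\delta$, $144\delta$, $288\delta$); the paper's centre-point formulation, on the other hand, is the form that gets reused later (e.g.\ in Lemma \ref{ControleRelativeDistGoingBackward}, where closeness of $\alpha$ to both verticals at the top is what matters). One small step you use but do not justify: when you say $z$ is $24\delta$-close to a point of $R_x\cup R_y$ ``at height within $24\delta$ of $h_0$'', your first paragraph only established the lower bound $h(p)\ge h_0-24\delta$; the matching upper bound follows in one line from $h(z)\le h(x)+d(x,z)=h_0$ (the height is $1$-Lipschitz) and $d(z,p)\le 24\delta$, and it is needed in the sub-case where $p$ lies on $R_x$ to get $d(p,x_1)\le 24\delta$. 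With that line added, the argument is complete.
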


\begin{figure}
\begin{center}
\includegraphics[scale=1.1]{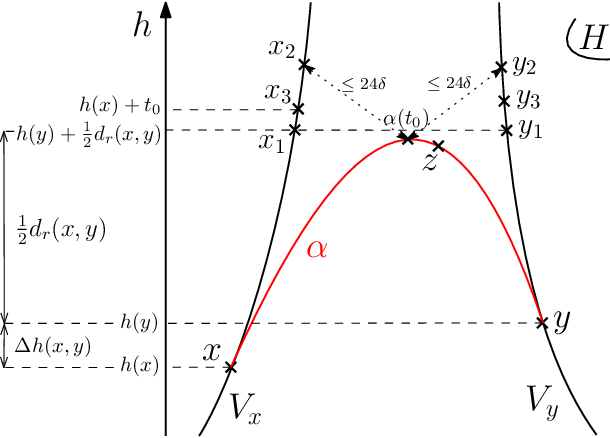} 
\caption{Proof of Lemma \ref{LEM0}}\label{FigLEM0}
\end{center}
\end{figure}

\begin{proof} The lemma and its proof are illustrated in Figure \ref{FigLEM0}. Following Property \ref{ThinTrianglePropr}, the triple of geodesics $\alpha$, $V_x$ and $V_y$ is a $24\delta$-slim triangle. Since the sets $\lbrace t\in[0,d(x,y)]|d(\alpha(t),V_x)\leq 24\delta\rbrace$ and $\lbrace t\in[0,d(x,y)]|d(\alpha(t),V_y)\leq 24\delta\rbrace$ are closed sets covering $[0,d(x,y)]$, their intersection is non empty. Hence there exists $t_0\in[0,d(x,y)]$, $x_2\in V_x$ and $y_2\in V_y$ such that $d(\alpha(t_0),x_2)\leq 24\delta$ and $d(\alpha(t_0),y_2)\leq 24\delta$. Let us first prove that $t_0$ is close to $\Delta h(x,y)+\frac{1}{2}d_r(x,y)$. By the triangle inequality we have that:
\begin{align*}
|t_0-d(x,x_2)|=|d(x,\alpha(t_0))-d(x,x_2)|\leq d(x_2,\alpha(t_0))\leq 24\delta.
\end{align*}
Let us denote $x_3:=V_x(h(x)+t_0)$ the point of $V_x$ at height $h(x)+t_0$, and $y_3=V_y(h(y)+d(x,y)-t_0)$ the point of $V_y$ at height $h(y)+d(x,y)-t_0$. Then by the triangle inequality:
\begin{align}
d(\alpha(t_0),x_3)&\leq d(\alpha(t_0),x_2)+d(x_2,x_3)=d(\alpha(t_0),x_2)+|d(x,x_2)-d(x,x_3)|\nonumber
\\&\leq d(\alpha(t_0),x_2)+|d(x,x_2)-t_0|\leq 48\delta.\label{distprocheentregeod}
\end{align}
In the last inequality we used that $d(x,x_3)=t_0$, which holds by the definition of $x_3$. We show in the same way that $d(\alpha(t_0),y_3)\leq 48\delta$. By the triangle inequality we have $d(x_3,y_3)\leq 96\delta$. As the height function is Lipschitz we have $\Delta h(x_3,y_3)\leq d(x_3,y_3)\leq 96\delta$, which provides us with:
\begin{align}
\left|\frac{1}{2}d_r(x,y)+\Delta h(x,y)-t_0\right|&=\frac{1}{2}\big|d_r(x,y)+\Delta h(x,y)+h(y)-h(x)-2t_0\big|\nonumber
\\&=\frac{1}{2}|h(y)+d(x,y)-t_0-(h(x)+t_0)|=\frac{1}{2}\Delta h(x_3,y_3)\leq \frac{96\delta}{2}\leq 48\delta.\label{t0ettempshautproche}
\end{align}
In particular it gives us that $d(z,\alpha(t_0))\leq 48\delta$. We are now ready to prove the first point using inequalities (\ref{distprocheentregeod}) and (\ref{t0ettempshautproche}):
\begin{align*}
h^+(\alpha)\geq& h(\alpha(t_0))\geq h(x_3)-\Delta h(\alpha(t_0),x_3)\geq h(x)+t_0-48\delta
\\\geq & h(x)+\frac{1}{2}d_r(x,y)+\Delta h(x,y)-96\delta\geq h(y)+\frac{1}{2}d_r(x,y)-96\delta,\text{ as we have }h(x)\leq h(y).
\end{align*}
The second point of our lemma is proved as follows:
\begin{align*}
d(z,x_1)&\leq d(z,\alpha(t_0))+d(\alpha(t_0),x_1)\leq 48\delta+d(\alpha(t_0),x_3)+d(x_3,x_1) 
\\&\leq 96\delta+\left|t_0+h(x)-\left(\frac{1}{2}d_r(x,y)+h(y)\right)\right|=96\delta+\left|t_0-\left(\Delta h(x,y) +\frac{1}{2}d_r(x,y)\right)\right|\leq 144\delta.
\end{align*}
The proof of $3.$ is similar, and $4.$ is obtained from $2.$ and $3.$ by the triangle inequality.
\end{proof}
The next lemma shows that  in the case where $h(x)\leq h(y)$ a geodesic linking $x$ to $y$ is almost vertical until it reaches the height $h(y)$.

\begin{lemma}\label{LinkDrAndDSameHeight}
Let $H$ be a $\delta$-hyperbolic and Busemann space. Let $x$ and $y$ be two points of $H$ such that $h(x)\leq h(y)$.  We define $x':=V_x(h(y))$ to be the point of the vertical geodesic $V_x$ at the same height as $y$. Then:
\begin{equation}
|d_r(x,y)-d(x',y)|\leq 54\delta.
\end{equation}
\end{lemma}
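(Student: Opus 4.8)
The plan is to prove the two inequalities $d_r(x,y)\le d(x',y)$ and $d(x',y)\le d_r(x,y)+54\delta$ separately. For the first, I would record that since $V_x$ is parametrised so that $h(V_x(t))=t$ (Proposition \ref{PropoHautLin}), the points $x=V_x(h(x))$ and $x'=V_x(h(y))$ satisfy $d(x,x')=h(y)-h(x)=\Delta h(x,y)$; the triangle inequality $d(x,y)\le d(x,x')+d(x',y)$ then reads $d_r(x,y)\le d(x',y)$. For the second --- the real content --- I would rewrite the gap, using $d(x,x')=\Delta h(x,y)$ once more, as
\begin{equation}
d(x',y)-d_r(x,y)=d(x,x')+d(x',y)-d(x,y)\nonumber
\end{equation}
(which one recognises as twice the Gromov product of $x$ and $y$ based at $x'$), and then bound its right-hand side by twice the distance of $x'$ to the geodesic $[x,y]$: if $p\in[x,y]$ is a point nearest to $x'$, then $d(p,x)+d(p,y)=d(x,y)$ gives $d(x,x')+d(x',y)-d(x,y)\le 2d(x',p)=2d\big(x',[x,y]\big)$. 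So everything reduces to locating $x'$ relative to $[x,y]$.

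The core step is to show that $x'$ is $24\delta$-close to $[x,y]$, or else directly $48\delta$-close to $y$. Let $a\in\partial H$ be the boundary point defining $h$. The restrictions $V_x|_{[h(x),+\infty)}$ and $V_y|_{[h(y),+\infty)}$ are vertical geodesic rays, hence geodesics from $x$ to $a$ and from $y$ to $a$ respectively (Corollary \ref{PropoGeodVertVersa}). Applying Property \ref{ThinTrianglePropr} to the ideal triangle with vertices $x$, $y$, $a$ and these three sides, the side $V_x|_{[h(x),+\infty)}$, which contains $x'=V_x(h(y))$ since $h(y)\ge h(x)$, lies in the $24\delta$-neighbourhood of $[x,y]\cup V_y|_{[h(y),+\infty)}$. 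If $d\big(x',[x,y]\big)\le 24\delta$, the estimate above yields $d(x',y)-d_r(x,y)\le 48\delta\le 54\delta$. Otherwise $x'$ is within $24\delta$ of some $V_y(s)$ with $s\ge h(y)$; since $h$ is $1$-Lipschitz (Proposition \ref{PropHautLipsch}) we get $s-h(y)\le 24\delta$, hence $d(V_y(s),y)=s-h(y)\le 24\delta$ and $d(x',y)\le 48\delta$. As $0\le d_r(x,y)\le d(x',y)$, this forces $|d_r(x,y)-d(x',y)|\le 48\delta\le 54\delta$ in this case as well, and combining with the first inequality finishes the proof.

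I do not expect a genuine obstacle; the one point requiring care is the opening reformulation, turning $d(x',y)-d_r(x,y)$ into twice a Gromov product at $x'$, which converts the claim into the purely hyperbolic fact that $x'$ lies near the geodesic $[x,y]$ --- a fact handed to us by the thinness of the ideal triangle $x,y,a$. (The constant $54\delta$ in the statement is not tight for this route, which already gives $48\delta$, so there is room for a slightly different argument, for instance one passing through the peak point $z$ and the vertical geodesics $V_x,V_y$ of Lemma \ref{LEM0}.)
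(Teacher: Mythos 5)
Your proof is correct, but it takes a genuinely different route from the paper's. The paper works with the finite geodesic triangle on $x,y,x'$: by $\delta$-slimness it produces an approximate centre $m\in[x,y]$, pins down the height of $m$ by applying Property \ref{ThinTrianglePropr} to the ideal triangle over $[x',y]$ with apex $a$ (giving $h^-([x',y])\geq h(y)-24\delta$), deduces $d(m,x')\leq 27\delta$, and then converts $d(x,y)=d(x,m)+d(m,y)$ into the estimate, ending at $54\delta$. You instead rewrite the defect $d(x',y)-d_r(x,y)$ as $d(x,x')+d(x',y)-d(x,y)$, i.e.\ twice the Gromov product of $x$ and $y$ at $x'$, bound it by $2\,d\big(x',[x,y]\big)$ via a nearest point $p\in[x,y]$, and then apply Property \ref{ThinTrianglePropr} once, to the ideal triangle with vertices $x,y,a$ and sides $[x,y]$, $V_x|_{[h(x),+\infty)}$, $V_y|_{[h(y),+\infty)}$, so as to locate $x'$ itself: either $x'$ is $24\delta$-close to $[x,y]$, giving the bound $48\delta$ directly, or $x'$ is $24\delta$-close to a point $V_y(s)$ with $s-h(y)\leq 24\delta$ by the Lipschitz property of $h$, whence $d(x',y)\leq 48\delta$ and the conclusion follows from $0\leq d_r(x,y)\leq d(x',y)$. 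Both arguments use the exact height parametrisation of vertical geodesics (hence the Busemann hypothesis, via Proposition \ref{PropoHautLin}) and the $24\delta$-thinness of ideal triangles, but your version avoids the centre-point construction, handles the degenerate position of $x'$ as a separate trivial case, and yields the slightly sharper constant $48\delta$; the paper's version produces the explicit point $m$ on $[x,y]$ near $x'$, which is not needed elsewhere, so nothing is lost by your shortcut.
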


\begin{proof}
Since $H$ is $\delta$-hyperbolic, the geodesic triangle $[x,y]\cup[y,x']\cup[x',x]$ is $\delta$-slim. Then there exists $p_1\in[x,x']$, $p_2\in[x',y]$ and $m\in[x,y]$ such that $d(p_1,m)\leq \delta$ and $d(p_2,m)\leq \delta$. Hence, $h^-([x',y])-\delta\leq h(m)\leq h^+([x,x'])+\delta$. Let $R_{x'}$ and $R_y$ be two vertical geodesic rays respectively contained in $V_x$ and $V_y$ and respectively starting at $x'$ and $y$. Then Property \ref{ThinTrianglePropr} used on the ideal triangle $R_x\cup R_y \cup [x',y]$ implies that $h^-([x',y])\geq h(y)-24\delta$, therefore we have $h^+([x,x'])=h(y)$. Then $h(y)-25\delta\leq h(m)\leq h(y)+\delta$ holds.
It follows that $m$ and $x'$ are close to each other:
\begin{align}
d(m,x')&\leq d(m,p_1)+d(p_1,x')\leq \delta + \Delta h(p_1,x')\leq \delta + \Delta h(p_1,m)+ \Delta h(m,y) +\Delta h(y,x')\nonumber
\\&\leq \delta + d(p_1,m)+ 25\delta + 0\leq 27\delta.\label{EqRandLem1}
\end{align}
Then we give an estimate on the distance between $x$ and $m$:
\begin{equation}
|d(x,m)-\Delta h(x,y)|= |d(x,m)-d(x,x')|\leq d(m,x')\leq 27\delta.\label{EqRandLem2}
\end{equation}
However $d_r(x,y)=d(x,y)-\Delta h(x,y)$ and $d(x,y)=d(x,m)+d(m,y)$, therefore:
\begin{equation}
d_r(x,y)=d(x,m)+d(m,y)-\Delta h(x,y).\label{EqRandLem3}
\end{equation}
Combining inequalities (\ref{EqRandLem2}) and (\ref{EqRandLem3}) we have $|d_r(x,y)-d(m,y)|\leq 27\delta$. Then:
\begin{align*}
|d_r(x,y)-d(x',y)|\leq 27\delta +d(x',m) \leq 54\delta.
\end{align*}
\end{proof}

We are now able to prove the estimates of the next section.

\subsection{Length estimate of paths avoiding horospheres}\label{SecPathAvoidHoro}

Consider a path $\gamma$ and a geodesic $\alpha$ sharing the same end-points in a proper, Gromov hyperbolic, Busemann space. We prove in this section that if the height of $\gamma$ does not reach the maximal height of the geodesic $\alpha$, then $\gamma$ is much longer than $\alpha$. Furthermore, its length increases exponentially with respect to the difference of maximal height between $\gamma$ and $\alpha$. To do so, we make use of Proposition $1.6$ p400 of \cite{BH}, which we recall here. Let us denote by $l(c)$ the length of a path $c$.

\begin{propo}[\cite{BH}]\label{LemmeBrid}
Let $X$ be a $\delta$-hyperbolic geodesic space. Let $c$ be a continuous path in X. If $[p,q]$ is a geodesic segment connecting the endpoints of $c$, then for every $x\in[p,q]$: $$d(x,\mathrm{im}(c))\leq \delta |\log_2 l(c)|+1.$$
\end{propo}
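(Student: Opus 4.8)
The plan is to run the classical dyadic subdivision argument on geodesic triangles, as in Bridson--Haefliger. First I would dispose of the degenerate cases. If $c$ is not rectifiable then $l(c)=+\infty$ and the right-hand side is infinite, so there is nothing to prove. If $l(c)<1$, then since $[p,q]$ is the shortest path between the endpoints $p=c(0)$ and $q=c(1)$ of $c$ we have $d(p,q)\le l(c)$, hence for $x\in[p,q]$, $d(x,\mathrm{im}(c))\le d(x,p)\le d(p,q)\le l(c)<1\le \delta|\log_2 l(c)|+1$. So I may assume $1\le l(c)=:L<+\infty$ and reparametrize $c$ proportionally to arc length as $c:[0,1]\to X$, so that $d(c(s),c(t))\le L|s-t|$ for all $s,t\in[0,1]$.

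The heart of the proof is the following claim, established by induction on $k\ge 0$: there exist reals $0\le a_k\le b_k\le 1$ with $b_k-a_k=2^{-k}$ and a point $y_k$ lying on some geodesic segment $[c(a_k),c(b_k)]$ such that $d(x,y_k)\le k\delta$. For $k=0$ take $a_0=0$, $b_0=1$, $y_0=x$, which lies on $[p,q]=[c(0),c(1)]$. For the inductive step, set $m=(a_k+b_k)/2$ and consider a geodesic triangle with vertices $c(a_k),c(m),c(b_k)$, one of whose sides is the segment $[c(a_k),c(b_k)]$ containing $y_k$. By $\delta$-slimness, $y_k$ is within $\delta$ of a point $y_{k+1}$ on $[c(a_k),c(m)]\cup[c(m),c(b_k)]$; taking $(a_{k+1},b_{k+1})$ to be $(a_k,m)$ or $(m,b_k)$ accordingly yields the claim at level $k+1$.

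Finally I would optimize over $k$. Put $N=\lfloor\log_2 L\rfloor\ge 0$, so that $2^N\le L<2^{N+1}$ and hence $d(c(a_N),c(b_N))\le L\,2^{-N}<2$. Since $y_N$ lies on a geodesic segment joining $c(a_N)$ to $c(b_N)$, it is within $\tfrac12 d(c(a_N),c(b_N))<1$ of one of these two endpoints, and both lie in $\mathrm{im}(c)$; therefore $d(y_N,\mathrm{im}(c))<1$. Combining with $d(x,y_N)\le N\delta$ gives $d(x,\mathrm{im}(c))< N\delta+1\le \delta\log_2 L+1=\delta|\log_2 l(c)|+1$, which is the desired inequality.

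As for difficulty, there is essentially no hard step: once one sees the dyadic recursion the argument is entirely elementary. The only points requiring care are the constant bookkeeping --- one must truncate with $\lfloor\log_2 L\rfloor$ rather than round up, so that $L\,2^{-N}<2$ and the final geodesic segment is short enough for its midpoint bound to be below $1$ --- and the treatment of the non-rectifiable and short-path boundary cases, so that the stated bound $\delta|\log_2 l(c)|+1$, with the absolute value, holds verbatim.
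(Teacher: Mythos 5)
Your proof is correct: the paper itself does not reprove this statement but quotes it from Bridson--Haefliger, and your dyadic subdivision argument (repeatedly bisecting $c$, pushing $x$ across $\delta$-slim triangles, then stopping after $\lfloor \log_2 l(c)\rfloor$ steps so the final chord has length $<2$) is exactly the argument given there. The boundary cases ($l(c)<1$, non-rectifiable $c$) and the constant bookkeeping are handled correctly, so nothing needs to be added.
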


This result implies that a path of $X$ between $p$ and $q$ which avoids the ball of diameter $[p,q]$ has length greater than an exponential of the distance $d(p,q)$. 

From now on we will add as convention that $\delta\geq 1$. For all $\delta_1\leq\delta_2$ a $\delta_1$-slim triangle is also $\delta_2$-slim, hence all $\delta_1$-hyperbolic spaces are $\delta_2$-hyperbolic spaces. That is why we can assume that all Gromov hyperbolic spaces are $\delta$-hyperbolic with $\delta\geq 1$. It allows us to consider $\frac{1}{\delta}$ as a well defined term, we hence avoid the arising of separated cases in some oof the proofs. We also use this assumption to simplify constants appearing in this document. The next result is a similar control on the length of path as Proposition \ref{LemmeBrid}, but we consider that the path is avoiding a horosphere instead of avoiding a ball in~$H$.

\begin{lemma}\label{LemmeAmande}
Let $\delta\geq 1$ and $H$ be a proper, geodesic, $\delta$-hyperbolic, Busemann space. Let $x$ and $y\in H$ and let $V_x$, respectively $V_y$, be a vertical geodesic containing $x$, respectively $y$. Let us consider $t_0\geq\max(h(x),h(y))$ and let us denote $x_0:=V_x(t_0)$ and $y_0:=V_y(t_0)$, the respective points of $V_x$ and $V_y$ at the height $t_0$. Assume that $d(x_0,y_0)> 768\delta$.
\\Then for all connected path $\gamma:[0,T]\rightarrow H$ such that $\gamma(0)=x$, $\gamma(T)=y$ and $h^+(\gamma)\leq h(x_0)$ we have:
\begin{equation}
l(\gamma)\geq\Delta h(x,x_0)+\Delta h(y,y_0) +2^{-386}2^{\frac{1}{2\delta}d(x_0,y_0)}-24\delta .
\end{equation}
\end{lemma}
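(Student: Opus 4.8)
The plan is to reduce the statement to Proposition \ref{LemmeBrid} applied on the horosphere at height $h(x_0)$, using vertical geodesics to "lift" $x$ and $y$ up to that height without spending too much length, and then to estimate the length of the lifted path that now lives below (or on) that horosphere. First I would introduce the point $x_1 := V_x(t_0)=x_0$ and $y_1:=V_y(t_0)=y_0$, and consider the concatenation $\tilde\gamma$ of the vertical segment of $V_x$ from $x$ to $x_0$, the path $\gamma$ from $x$ to $y$, and the vertical segment of $V_y$ from $y$ to $y_0$. This $\tilde\gamma$ is a continuous path from $x_0$ to $y_0$ of length $l(\gamma)+\Delta h(x,x_0)+\Delta h(y,y_0)$, and crucially $h^+(\tilde\gamma)\le h(x_0)=t_0$ since the vertical pieces go from height $\le t_0$ up to exactly $t_0$ and $\gamma$ stays below $h(x_0)$ by hypothesis.

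Next I would apply Proposition \ref{LemmeBrid} to $\tilde\gamma$ with endpoints $x_0,y_0$: taking $x$ to be the midpoint $m$ of a geodesic $[x_0,y_0]$, we get $d(m,\mathrm{im}(\tilde\gamma))\le \delta|\log_2 l(\tilde\gamma)|+1$. The key geometric input is a lower bound on $d(m,\mathrm{im}(\tilde\gamma))$ coming from the fact that $\tilde\gamma$ stays at height $\le t_0$ while $m$ sits at height roughly $t_0 + \tfrac12 d(x_0,y_0)$ up to a bounded error — here I would invoke Lemma \ref{LEM0} (point 1, with the roles of the two endpoints being $x_0,y_0$, both at the same height $t_0$, so $\Delta h(x_0,y_0)=0$ and $d_r(x_0,y_0)=d(x_0,y_0)$) to say the midpoint of $[x_0,y_0]$ has height at least $t_0+\tfrac12 d(x_0,y_0)-96\delta$. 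Since the height function is $1$-Lipschitz (Proposition \ref{PropHautLipsch}) and every point of $\tilde\gamma$ has height $\le t_0$, any point $q\in\mathrm{im}(\tilde\gamma)$ satisfies $d(m,q)\ge h(m)-h(q)\ge \tfrac12 d(x_0,y_0)-96\delta$. Combining with the Bridson–Haefliger bound gives
\begin{equation}
\tfrac12 d(x_0,y_0)-96\delta \le \delta\log_2 l(\tilde\gamma) + 1,\nonumber
\end{equation}
where I have dropped the absolute value since under $d(x_0,y_0)>768\delta$ the left side is positive and hence $l(\tilde\gamma)\ge 1$. Rearranging yields $l(\tilde\gamma)\ge 2^{\frac{1}{\delta}(\frac12 d(x_0,y_0)-96\delta-1)} = 2^{-96-1/\delta}\,2^{\frac{1}{2\delta}d(x_0,y_0)}$, and with $\delta\ge 1$ one bounds $2^{-96-1/\delta}\ge 2^{-97}$, which is comfortably larger than the claimed $2^{-386}$; the slack there presumably absorbs a different choice of reference point than the exact midpoint, or is simply not optimized. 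Subtracting the two vertical lengths, $l(\gamma) = l(\tilde\gamma)-\Delta h(x,x_0)-\Delta h(y,y_0) \ge \Delta h(x,x_0)+\Delta h(y,y_0)+2^{-386}2^{\frac{1}{2\delta}d(x_0,y_0)}-24\delta$ follows — wait, that subtraction is the wrong sign; I would instead note that the conclusion we want has $l(\gamma)$ bounded \emph{below} by something including $+\Delta h(x,x_0)+\Delta h(y,y_0)$, so in fact one should not fold the vertical pieces into $\tilde\gamma$ and subtract, but rather argue that $\gamma$ itself must be long: the correct route is to bound $d(m,\mathrm{im}(\gamma))$ directly, observing $\mathrm{im}(\gamma)$ also has height $\le h(x_0)$, so the same Lipschitz argument gives $d(m,\mathrm{im}(\gamma))\ge \tfrac12 d(x_0,y_0)-96\delta$, hence $l(\gamma)\ge 2^{-97}2^{\frac{1}{2\delta}d(x_0,y_0)}$; then separately $l(\gamma)\ge d(x,y)\ge \Delta h(x,x_0)+\Delta h(y,y_0) - (\text{bounded error})$ is false in general, so the additive $\Delta h$ terms must come from concatenating with the vertical rays \emph{before} applying Bridson–Haefliger and then using that $l(\tilde\gamma)$, not $l(\gamma)$, is what's exponentially large, giving $l(\gamma)+\Delta h(x,x_0)+\Delta h(y,y_0)=l(\tilde\gamma)\ge 2^{-97}2^{\frac{1}{2\delta}d(x_0,y_0)}$, i.e. exactly $l(\gamma)\ge -\Delta h(x,x_0)-\Delta h(y,y_0)+2^{-97}2^{\frac{1}{2\delta}d(x_0,y_0)}$ — still the wrong sign.

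Resolving this sign issue is, I expect, the main subtlety: the statement genuinely wants a lower bound that \emph{adds} the vertical heights, which means one should get one copy of $2^{\frac{1}{2\delta}d(x_0,y_0)}$ from the "avoidance" mechanism and, \emph{in addition}, recover the vertical drops $\Delta h(x,x_0),\Delta h(y,y_0)$ from the part of $\gamma$ that must still climb. The clean way to do this is to split $\gamma$ into three pieces — an initial segment until it first leaves a neighbourhood of $V_x$ near $x$, a middle piece, and a final segment — OR, more simply, to apply Bridson–Haefliger not to $\tilde\gamma$ but to a path from $x$ to $y$ and use that $m$ (the high midpoint of $[x_0,y_0]$) forces $\gamma$ to come within $\delta\log_2 l(\gamma)+1$ of $m$, while simultaneously the endpoints $x,y$ of $\gamma$ are at distance $\ge \Delta h(x,x_0)+\tfrac12 d(x_0,y_0)$ and $\ge \Delta h(y,y_0)+\tfrac12 d(x_0,y_0)$ from $m$ respectively (again by Lipschitz-ness of $h$, since $h(m)\ge t_0+\tfrac12 d(x_0,y_0)-96\delta$ while $h(x)=t_0-\Delta h(x,x_0)$). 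Then $\gamma$ must travel from a point near $m$ out to $x$, contributing roughly $\Delta h(x,x_0)+\tfrac12 d(x_0,y_0)$, and also out to $y$, contributing $\Delta h(y,y_0)+\tfrac12 d(x_0,y_0)$; but one must be careful not to double-count. A cleaner accounting: let $s$ be a parameter where $\gamma(s)$ realizes $d(m,\mathrm{im}(\gamma))$; then $l(\gamma)\ge l(\gamma|_{[0,s]})+l(\gamma|_{[s,T]})\ge d(x,\gamma(s))+d(\gamma(s),y)\ge (d(x,m)-d(m,\gamma(s)))+(d(y,m)-d(m,\gamma(s)))$, and $d(x,m)\ge \Delta h(x,x_0)+\tfrac12 d(x_0,y_0)-96\delta$, similarly for $y$; combined with $d(m,\gamma(s))\le \delta\log_2 l(\gamma)+1$ and $l(\gamma)\ge 2^{-97}2^{\frac{1}{2\delta}d(x_0,y_0)}$ this gives, after bounding $\delta\log_2 l(\gamma)$ crudely and absorbing errors, $l(\gamma)\ge \Delta h(x,x_0)+\Delta h(y,y_0)+d(x_0,y_0)-\text{(const)}\cdot\delta$, which is even stronger than needed for the polynomial part; the exponential term then has to be re-derived on a sub-path of $\gamma$ (say the half that doesn't contain the detour to $m$) to keep it separate from the additive terms, OR — most likely what the paper does — the constant is degraded from $2^{-97}$ to $2^{-386}$ precisely to give room to carry out a three-way triangle-inequality split where each of the three pieces absorbs a fixed fraction of one exponential and the two vertical contributions, via careful bookkeeping with Lemma \ref{LEM0}, Lemma \ref{LinkDrAndDSameHeight}, and Proposition \ref{LemmeBrid}. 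So the real work — and the main obstacle — is this triangulated length bookkeeping: isolating the exponential gain from the avoidance of the horosphere while simultaneously, and without overlap, charging $\gamma$ for the vertical descents $x\to x_0$ and $y\to y_0$, keeping every constant explicit, and verifying the final inequality holds under the standing hypothesis $d(x_0,y_0)>768\delta$ (which is what guarantees all the error terms, including the $-96\delta$'s and the $\delta\log_2$ term, are dominated so that the stated form with the clean $-24\delta$ comes out). I would carry out that split as the core of the proof, using $\delta\ge 1$ throughout to linearize $1/\delta$ and merge constants.
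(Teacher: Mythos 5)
Your first step (concatenate the vertical segments $[x,x_0]$ and $[y,y_0]$ to $\gamma$, apply Proposition \ref{LemmeBrid} to the resulting path with endpoints $x_0,y_0$, and use the $1$-Lipschitz height together with Lemma \ref{LEM0} to see that a high point of $[x_0,y_0]$ is far from anything below height $t_0$) is sound and is indeed the engine behind the exponential term, and you correctly diagnose that it only yields $l(\gamma)+\Delta h(x,x_0)+\Delta h(y,y_0)\geq c\,2^{\frac{1}{2\delta}d(x_0,y_0)}$, which is useless when the vertical drops are large. But the repair you then sketch does not work: in the ``cleaner accounting'' you invoke $d(m,\gamma(s))\leq\delta\log_2 l(\gamma)+1$, i.e.\ Proposition \ref{LemmeBrid} applied to the path $\gamma$ at the point $m$. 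That proposition only controls the distance from points lying on a geodesic joining the \emph{endpoints of the path}, here $[x,y]$; your $m$ lies on $[x_0,y_0]$, and in fact $d(m,\mathrm{im}(\gamma))\geq\frac12 d(x_0,y_0)-144\delta$ is forced to be large (that largeness, applied legitimately to the augmented path, is precisely what gives the exponential). So the inequality you lean on is unjustified, the subsequent ``$l(\gamma)\geq\Delta h(x,x_0)+\Delta h(y,y_0)+d(x_0,y_0)-C\delta$'' does not follow, and you end by deferring the real difficulty (getting the additive $\Delta h$ terms and the exponential simultaneously and without overlap) to an unspecified three-way bookkeeping. A smaller slip: Lemma \ref{LEM0} point 1 bounds $h^+$ of the geodesic, not the height of its midpoint; for the midpoint you need point 2, with constant $144\delta$ rather than $96\delta$.

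The missing ideas are exactly what the paper supplies. It splits $\gamma$ at $t_x$, the first exit time from the ball $B(x,\Delta h(x,x_0))$, and $t_y$, the last entry time into $B(y,\Delta h(y,y_0))$: the outer pieces immediately contribute $\Delta h(x,x_0)$ and $\Delta h(y,y_0)$. A Busemann monotonicity argument (convexity and boundedness of $t\mapsto d(V_x(t),V_m(t))$, plus Lemma \ref{LEM0}) shows that for every $m$ in the first ball the vertical geodesic $V_m$ meets the height-$t_0$ horosphere within $288\delta$ of $x_0$ (and similarly for the second ball and $y_0$); this both proves the two balls are disjoint, so $t_x\leq t_y$, and guarantees that lifting only the middle piece $\gamma|_{[t_x,t_y]}$ vertically to height $t_0$ produces a path $\gamma'$ whose endpoints $x_1,y_1$ are $288\delta$-close to $x_0,y_0$, to which Proposition \ref{LemmeBrid} applies and gives $l(\gamma')\geq 2^{-385}2^{\frac{1}{2\delta}d(x_0,y_0)}$. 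The subtraction of the lifted vertical lengths is then compensated by a separate slim-triangle estimate, $l(\gamma|_{[t_x,t_y]})\geq\Delta h(\gamma(t_x),x_0)+\Delta h(\gamma(t_y),y_0)-48\delta$, and averaging the two lower bounds on the middle piece halves the exponential constant (whence $2^{-386}$) and produces the $-24\delta$. None of this mechanism, in particular the choice of split points via those metric balls and the compensating claim, appears in your proposal, so as it stands it does not prove the lemma.
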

\begin{figure}
\includegraphics[scale=0.35]{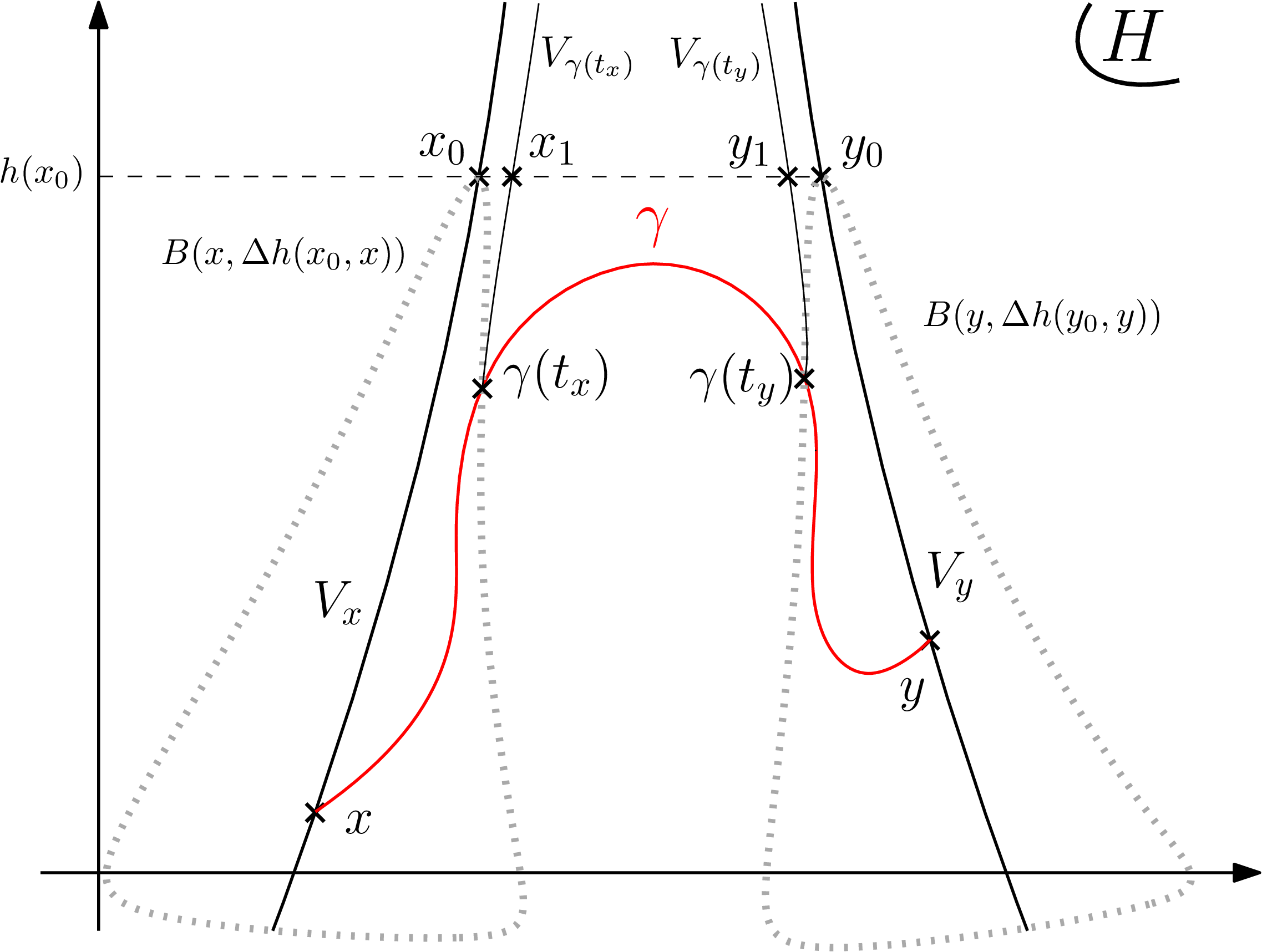} 
\centering
\caption{Proof of Lemma \ref{LemmeAmande}}
\label{FigLemmaAmande}
\end{figure}

For trees (when $\delta=0$) this Lemma still makes sense. Indeed, if $\delta$ tends to $0$ then the length of the path described in this Lemma tends to infinity, which is consistent with the fact that such a path does not exist in trees. The proof would use the fact that in Proposition \ref{LemmeBrid} we have $d(x,\mathrm{im}(c))=0$ when $\delta=0$ since $0$-hyperbolic spaces are real trees. 

\begin{proof}
One can follow the idea of the proof on Figure \ref{FigLemmaAmande}. We will consider $\gamma$ to be parametrised by arclength. Let $B(x,\Delta h(x_0,x))\subset H$ be the ball of radius $h(x_0)-h(x)$  centred on $x$, and let $m\in B(x,\Delta h(x_0,x))$ be a point in this ball. Then:
\begin{equation}
d_r(m,x)=d(m,x)-\Delta h(m,x)\leq \Delta h(x,x_0)-\Delta h(m,x)\leq \Delta h(x_0,m).\nonumber
\end{equation}
Let us first assume that $h(m)\geq h(x)$, then:
\begin{align}
h(m)+\frac{d_r(m,x)}{2}&\leq h(m)+\frac{\Delta h(x_0,m)}{2}\leq h(m)+\frac{h(x_0)-h(m)}{2}=\frac{h(x_0)}{2}+\frac{h(m)}{2} \leq h(x_0).\label{EqDrEtX0}
\end{align}
By Lemma \ref{LEM0} we have:
\begin{align}
d\left(V_x\left(h(m)+\frac{d_r(m,x)}{2}\right),V_m\left(h(m)+\frac{d_r(m,x)}{2}\right)\right)\leq 288\delta.\nonumber
\end{align}
We now assume that $h(m)\leq h(x)$, then:
\begin{equation}
h(x)+\frac{d_r(x,m)}{2}\leq  h(x) +\frac{d(x,m)}{2}\leq h(x)+\frac{\Delta h(x,x_0)}{2}\leq  h(x_0).\nonumber
\end{equation}
Then Lemma \ref{LEM0} provides us with:
\begin{align}
d\left(V_x\left(h(x)+\frac{d_r(m,x)}{2}\right),V_m\left(h(x)+\frac{d_r(m,x)}{2}\right)\right)\leq 288\delta.\nonumber
\end{align}
Since $H$ is a Busemann space, the function $t\to d(V_x(t),V_m(t))$ is convex. Furthermore $t\to d(V_x(t),V_m(t))$ is bounded on $[0;+\infty[$ as $H$ is Gromov hyperbolic, hence $t\to d(V_x(t),V_m(t))$ is a non increasing function. Therefore both cases $h(m)\leq h(x)$ and $h(x)\leq h(m)$ give us that:
\begin{align}
d\Big(x_0,V_m\left(h(x_0)\right)\Big)=d\Big(V_x\left(h(x_0)\right),V_m(h(x_0))\Big)\leq 288\delta.\label{EqDistProjHx0}
\end{align}
In other words, all points of $B(x,\Delta h(x_0,x))$ belong to a vertical geodesic passing nearby $x_0$. By the same reasoning we have $\forall n\in B(y,\Delta h(y_0,y))$ :
\begin{align}
d\Big(y_0,V_n\left(h(y_0)\right)\Big)\leq 288\delta.\label{EqDistProjHx02}
\end{align}
Then by the triangle inequality:
\begin{align}
d\Big(V_m(h(x_0)),V_n (h(y_0))\Big)&\geq -d\Big(x_0,V_m\left(h(x_0)\right)\Big)+d(x_0,y_0)-d\Big(y_0,V_n\left(h(y_0)\right)\Big)\nonumber
\\&\geq 768\delta -288\delta -288\delta\geq 192\delta.\label{EqX1Y155}
\end{align}
Specifically $d(V_m(h(x_0)),V_n (h(y_0)))=d(V_m(h(x_0)),V_n (h(x_0)))>0$ which implies that $m\neq n$. Then $B(x,\Delta h(x_0,x))\cap B(y,\Delta h(y_0,y)) = \emptyset$. By continuity of $\gamma$ we deduce the existence of the two following times $t_x\leq t_y$ such that:
\begin{align}
t_x&=\inf\lbrace t\in[0,T]\ |\ d(\gamma(t),x)=\Delta h(x,x_0)\rbrace,\nonumber
\\t_y&=\sup\lbrace t\in[0,T]\ |\ d(\gamma(t),y)=\Delta h(y,y_0)\rbrace.\nonumber
\end{align}
In order to have a lower bound on the length of $\gamma$ we will need to split this path into three parts: 
\begin{equation}
\gamma=\gamma_{|[0,t_x]}\cup\gamma_{|[t_x,t_y]}\cup\gamma_{|[t_y,T]}.\nonumber
\end{equation}
As $\gamma$ is parametrised by arclength and $d(\gamma(0),\gamma(t_x))=\Delta h(x,x_0)$ we have that:
\begin{equation}
l\left(\gamma_{|[0,t_x]}\right)\geq \Delta h(x,x_0).\label{refXX0}
\end{equation}
For similar reasons we also have:
\begin{equation}
l\left(\gamma_{|[t_y,T]}\right)\geq \Delta h(y,y_0).\label{refYY0}
\end{equation}
We will now focus on proving a lower bound for the length of $\gamma_{|[t_x,t_y]}$.
\\\\We want to construct a path $\gamma'$ joining $x_1=V_{\gamma(t_x)}(h(x_0))$ to $y_1=V_{\gamma(t_y)}(h(x_0))$, that stays below $h(x_0)$ and such that $\gamma_{|[t_x,t_y]}$ is contained in $\gamma'$. Let $x_1:=V_{\gamma(t_x)}(h(x_0))$ and $y_1:=V_{\gamma(t_y)}(h(x_0))$. We construct $\gamma'$ by gluing paths together:
\begin{equation}
\gamma' = 
\left\{
    \begin{array}{ll}
        V_{\gamma(t_x)} & \mbox{from } x_1 \mbox{ to } \gamma(t_x)\\
        \gamma & \mbox{from } \gamma(t_x) \mbox{ to } \gamma(t_y)\\
        V_{\gamma(t_y)} & \mbox{from } \gamma(t_y) \mbox{ to } y_1\\
    \end{array}
\right.\nonumber
\end{equation}
Applying inequalities (\ref{EqDistProjHx0}) and (\ref{EqDistProjHx02}) used on $\gamma(t_x)$ and $\gamma(t_y)$ we get:
\begin{align}
d(x_0,x_1)&\leq 288\delta ,\label{EqX01108}
\\d(y_0,y_1)&\leq 288\delta.\label{EqY01108}
\end{align}

In order to apply Proposition \ref{LemmeBrid} to $\gamma'$ we need to check that there exists a point $A$ of the geodesic segment $[x_1,y_1]$ such that $h(A)\geq h(x_0)$. Applying Lemma \ref{LEM0} to $[x_1,y_1]$ and since $h(x_1) = h(y_1)$ we get: 
\begin{align}
h^+([x_1,y_1])&\geq \frac{d_r(x_1,y_1)}{2}+h(x_0)-96\delta= \frac{d(x_1,y_1)}{2}+h(x_0)-96\delta.\nonumber
\end{align}
Thanks to the triangle inequality and inequalities (\ref{EqX01108}) and (\ref{EqY01108}):
\begin{align}
h^+([x_1,y_1])&\geq \frac{d(y_0,x_0)-d(x_0,x_1)-d(y_0,y_1)}{2}+h(x_0)-96\delta\geq \frac{d(x_0,y_0)}{2}+h(x_0)-384\delta.\nonumber
\end{align} 
Since by hypothesis $d(x_0,y_0)>768\delta$, there exists a point $A$ of $[x_1,y_1]$ exactly at the height:
\begin{equation}
h(A)=\frac{d(x_0,y_0)}{2}+h(x_0)-384\delta.\nonumber
\end{equation}
We can then apply Proposition \ref{LemmeBrid} to get:
\begin{align*}
\delta | \log_2(l(\gamma'))|+1&\geq d(A,\gamma')\geq\Delta h(A,x_0)\geq\frac{d(x_0,y_0)}{2}+h(x_0)-384\delta-h(x_0)
\\&\geq\frac{d(x_0,y_0)}{2}-384\delta. 
\end{align*}
Since $\delta\geq 1$, last inequality implies that $l(\gamma')\geq 2^{-385}2^{\frac{1}{2\delta}d(x_0,y_0)}$. Now we use this inequality to have a lower bound on the length of $\gamma_{|[t_x,t_y]}$:
\begin{align}
l(\gamma_{|[t_x,t_y]})&\geq l(\gamma')-\Delta h(\gamma(t_x),x_0)-\Delta h(\gamma(t_y),y_0)\nonumber
\\&\geq 2^{-385}2^{\frac{1}{2\delta}d(x_0,y_0)}-\Delta h(\gamma(t_x),x_0)-\Delta h(\gamma(t_y),y_0).\label{FirstPieceAdding}
\end{align}
We claim that $l\left(\gamma_{|[t_x,t_y]}\right)\geq  \Delta h(\gamma(t_x),x_0)+\Delta h(\gamma(t_y),y_0)-48\delta$, hence:
\begin{align}
l\left(\gamma_{|[t_x,t_y]}\right)&\geq 2^{-386}2^{\frac{1}{2\delta}d(x_0,y_0)}-24\delta,\label{IneqUseHereDontNow}
\end{align}
which ends the proof by combining inequality (\ref{IneqUseHereDontNow}) with inequalities (\ref{refXX0}) and (\ref{refYY0}).
\\\\\hspace*{0.6cm}Proof of the claim. Inequality (\ref{EqX1Y155}) with $m=\gamma(t_x)$ and $n=\gamma(t_y)$ gives $d(x_1,y_1)\geq 192\delta$. We want to prove that $h^+([\gamma(t_x),\gamma(t_y)])\geq h(x_1)-24\delta$. First, by Lemma \ref{ThinTrianglePropr} we have that $[\gamma(t_x),\gamma(t_y)]\cup V_{\gamma(t_x)} \cup V_{\gamma(t_y)}$ is a $24\delta$-slim triangle. Then there exist three times $t_0$, $t_1$ and $t_2$ such that $d\left(V_{\gamma(t_x)}(t_1),\gamma (t_0)\right)\leq 24\delta$ and such that $d\left(V_{\gamma(t_y)}(t_2),\gamma (t_0)\right)\leq 24\delta$. Then:
\begin{align}
|t_1-t_2|&=\Delta h\left( V_{\gamma(t_x)}(t_1),V_{\gamma(t_y)}(t_2)\right)\leq d\left(V_{\gamma(t_x)}(t_1),V_{\gamma(t_y)}(t_2)\right)\nonumber
\\&\leq d\left(V_{\gamma(t_x)}(t_1),\gamma (t_0)\right)+d\left(\gamma (t_0),V_{\gamma(t_y)}(t_2)\right)\leq 48\delta.\label{InegT1T218D}
\end{align}
We will show by contradiction that either $t_1=h(V_{\gamma(t_x)}(t_1))\geq h(x_0)$ or $t_2=h(V_{\gamma(t_y)}(t_2))\geq h(x_0)$.
\\Assume that $t_1< h(x_0)$ and $t_2< h(x_0)$. Then by the triangle inequality:
\begin{align*}
d\big(V_{\gamma(t_x)}(t_1),V_{\gamma(t_y)}(t_2)\big)&\geq d\big(V_{\gamma(t_y)}(t_2),V_{\gamma(t_x)}(t_2)\big)-d\big(V_{\gamma(t_x)}(t_2),V_{\gamma(t_x)}(t_1)\big)
\\&\geq d\big(V_{\gamma(t_y)}(t_2),V_{\gamma(t_x)}(t_2)\big)-48\delta\text{,  since }|t_1-t_2|\leq 48\delta\text{ by equation (\ref{InegT1T218D}).}
\end{align*}
As $H$ is a Busemann space, the function $t\mapsto d\big(V_{\gamma(t_x)}(t),V_{\gamma(t_y)}(t)\big)$ is non increasing (convex and bounded function). Furthermore, $h(x_0)\geq t_2$ hence:
\begin{align*}
48\delta&\geq d\big(V_{\gamma(t_x)}(t_1),V_{\gamma(t_x)}(t_2)\big)\geq  d\big(V_{\gamma(t_x)}(t_2),V_{\gamma(t_y)}(t_2)\big)-48\delta
\\&\geq d\big(V_{\gamma(t_x)}(h(x_0)),V_{\gamma(t_y)}(h(x_0))\big)-48\delta \geq d(x_1,y_1)-48\delta
\\&\geq d(x_0,y_0)-d(x_0,x_1)-d(y_0,y_1)-48\delta\geq d(x_0,y_0)-624\delta\text{, by inequalities }(\ref{EqX01108})\text{ and }(\ref{EqY01108}),
\\&\geq 49\delta\text{, since }d(x_0,y_0)\geq 768\delta\text{ by assumption},
\end{align*}
which is impossible. Therefore $t_1\geq h(x_0)$ or $t_2\geq h(x_0)$. We assume without loss of generality that $t_1\geq h(x_0)$, then:
\begin{equation}
\Delta h\big(\gamma(t_0),V_{\gamma(t_x)}(t_1)\big) \leq d\big(\gamma(t_0),V_{\gamma(t_x)}(t_1)\big)\leq 24\delta,\nonumber
\end{equation}
which implies:
\begin{align*}
h^+([\gamma(t_x),\gamma(t_y)])\geq h(\gamma(t_0))\geq h\left(V_{\gamma(t_x) }(t_1)\right)-\Delta h\big(\gamma(t_0),V_{\gamma(t_x)}(t_1)\big)\geq h(x_0)-24\delta,
\end{align*}
and gives us:
\begin{align}
l\left(\gamma_{|[t_x,t_y]}\right)&\geq h^+([\gamma(t_x),\gamma(t_y)])-h(\gamma(t_x))+h^+([\gamma(t_x),\gamma(t_y)])-h(\gamma(t_y))\nonumber
\\&\geq h(x_0)-24\delta-h(\gamma(t_x))+h(x_0)-24\delta-h(\gamma(t_y))\nonumber
\\&\geq \Delta h(\gamma(t_x),x_0)+\Delta h(\gamma(t_y),y_0)-48\delta.\label{SecondPieceAdding}
\end{align}
\end{proof}

Next lemma shows that we are able to control the relative distance of a couple of points travelling along two vertical geodesics. We recall that for all $a,b\in H$, $d_r(a,b)=d(a,b)-\Delta h(a,b)$.

\begin{lemma}[Backwards control]\label{ControleRelativeDistGoingBackward}
Let $\delta\geq 0$ and $H$ be a proper, $\delta$-hyperbolic, Busemann space. Let $V_1$ and $V_2$ be two vertical geodesics of $H$. Then for all couple of times $(t_1,t_2)$ and for all $t\in\left[0,\frac{1}{2}d_r(V_1(t_1),V_2(t_2))\right]$: 
\begin{align*}
\left|d_r\left(V_1\left(t_1+\frac{1}{2}d_r(V_1(t_1),V_2(t_2))-t\right),V_2\left(t_2+\frac{1}{2}d_r(V_1(t_1),V_2(t_2))-t\right)\right)-2t\right|\leq 288\delta.
\end{align*}
\end{lemma}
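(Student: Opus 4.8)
The statement is a "backwards control" estimate: starting from a pair of points $V_1(t_1)$, $V_2(t_2)$ on two vertical geodesics, walk both of them downward (decreasing the parameter) from the common height $H_0 := h(V_1(t_1)) + \tfrac12 d_r(V_1(t_1),V_2(t_2)) - t$ reached after first lifting to the midpoint height, and control how the relative distance between the resulting pair grows roughly like $2t$. The natural approach is to reduce to Lemma~\ref{LEM0}, which already gives the precise geometry of a geodesic between two points of different height in terms of the vertical geodesics through them. Write $x = V_1(t_1)$, $y = V_2(t_2)$; without loss of generality assume $h(x) \le h(y)$ (otherwise swap the roles of $V_1$ and $V_2$, which is symmetric in the statement). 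Set $D := d_r(x,y)$, and for $t \in [0, D/2]$ put $s := \tfrac12 D - t \in [0,D/2]$, so the two points in question are $V_1(t_1 + s)$ and $V_2(t_2 + s)$, both at heights shifted up by $s$ from the starting heights.

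**Key steps.** First I would relate $d_r(x,y)$ to honest distances along the verticals via Lemma~\ref{LEM0}: the point $z = \alpha(\Delta h(x,y) + \tfrac12 D)$ on a geodesic $\alpha$ from $x$ to $y$ is $144\delta$-close to both $x_1 := V_1(h(y) + \tfrac12 D)$ and $y_1 := V_2(h(y) + \tfrac12 D)$, and $d(x_1,y_1) \le 288\delta$. So the two verticals $V_1, V_2$ come $288\delta$-close at the height $h(y) + \tfrac12 D$, which by Busemann convexity of $t \mapsto d(V_1(t), V_2(t))$ (the function is bounded, hence non-increasing, as used repeatedly in the proof of Lemma~\ref{LemmeAmande}) means they stay $288\delta$-close at every height above that. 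Next, the quantity to estimate is $d_r(V_1(t_1+s), V_2(t_2+s)) = d(V_1(t_1+s), V_2(t_2+s)) - \Delta h$, where $\Delta h = |h(y) - h(x)| = h(y) - h(x)$ is independent of $s$ (since moving along a vertical geodesic shifts height by exactly the parameter, by Proposition~\ref{PropoHautLin}, the height difference between the two moving points is the constant $h(y)+\text{shift} - (h(x)+\text{shift})$... wait — here both get shifted by the same $s$, so indeed $\Delta h$ is constant). Then for the lower bound, use the triangle inequality going up from $V_1(t_1+s)$ and $V_2(t_2+s)$ to the near-meeting height $h(y)+\tfrac12 D$: the vertical segments have lengths equal to the height differences, which are $\tfrac12 D - s$ on each side plus the gap $h(y)-h(x)$ on the $V_1$ side... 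I would compute these heights explicitly: $h(V_1(t_1+s)) = h(x) + s$ and $h(V_2(t_2+s)) = h(y)+s$, both at most $h(y)+\tfrac12 D$ since $s \le D/2$. Going up to height $h(y)+\tfrac12 D$ costs $h(y)+\tfrac12 D - (h(x)+s) = \tfrac12 D - s + (h(y)-h(x))$ along $V_1$ and $\tfrac12 D - s$ along $V_2$; since the tops are $288\delta$-close, $d(V_1(t_1+s),V_2(t_2+s)) \le (\tfrac12 D - s) + (h(y)-h(x)) + (\tfrac12 D - s) + 288\delta = 2t + \Delta h + 288\delta$, giving the upper bound. For the reverse inequality, apply Lemma~\ref{LEM0} again to a geodesic between $V_1(t_1+s)$ and $V_2(t_2+s)$: the relative distance of those two points is, up to additive $288\delta$, twice the distance from the lower point up to the height where the verticals through the two points nearly meet — and those verticals are exactly $V_1$ and $V_2$, meeting near height $h(y)+\tfrac12 D$, so that distance is $\approx 2t$.

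**Main obstacle.** The delicate point is the lower bound $d_r(V_1(t_1+s),V_2(t_2+s)) \ge 2t - 288\delta$: one must argue that the "meeting height" of the two verticals through the shifted points is not much lower than $h(y)+\tfrac12 D$, i.e. that $V_1$ and $V_2$ do not come close at any height strictly below $h(y) + \tfrac12 D - O(\delta)$. This is where I would invoke Lemma~\ref{LEM0} in the direction that gives $h^+(\alpha) \ge (\text{meeting height}) - 96\delta$ together with the formula relating $h^+$ of a geodesic between two equal-height points to their relative distance, pinning the meeting height to $h(y)+\tfrac12 D$ within $O(\delta)$, and then reading off $d_r$ of the shifted pair. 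Bookkeeping the additive constants so that the final bound is exactly $288\delta$ (rather than some larger multiple) will require being slightly careful about which estimate from Lemma~\ref{LEM0} is applied where, but each ingredient is already available. Everything else is triangle inequalities and the monotonicity of $t \mapsto d(V_1(t),V_2(t))$.
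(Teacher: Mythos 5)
Your upper bound is exactly the paper's: cross from $V_1$ to $V_2$ at the height $h(y)+\tfrac{1}{2}d_r(x,y)$ where point 4 of Lemma \ref{LEM0} makes the two verticals $288\delta$-close, and triangulate along the verticals; this gives $d_r\le 2t+288\delta$ (the monotonicity of $s\mapsto d(V_1(s),V_2(s))$ you invoke is not even needed for this half). For the lower bound you take a genuinely different route from the paper. The paper never pins down a ``meeting height'': it keeps the original geodesic $\alpha$ from $x=V_1(t_1)$ to $y=V_2(t_2)$ and uses points 2--3 of Lemma \ref{LEM0} together with Busemann convexity of the distance between $\alpha$ and each $V_i$ (that distance vanishes at $x$, resp.\ $y$, and is $\le 144\delta$ at the points produced by Lemma \ref{LEM0}, hence $\le 144\delta$ along the whole affine interpolation); the two points of $\alpha$ thus paired with $V_1(t_1+s)$ and $V_2(t_2+s)$ sit at parameter distance exactly $2t+\Delta h(x,y)$ on $\alpha$, and since $\alpha$ is a geodesic this immediately gives $d\ge 2t+\Delta h(x,y)-288\delta$, i.e.\ $d_r\ge 2t-288\delta$, with no further input.

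Your alternative --- apply Lemma \ref{LEM0}(4) to the shifted pair, so that $V_1,V_2$ are $288\delta$-close at the height $h(y)+s+\tfrac{1}{2}d_r\big(V_1(t_1+s),V_2(t_2+s)\big)$, and then argue that this height cannot lie below $h(y)+\tfrac{1}{2}d_r(x,y)-O(\delta)$ --- does work and even yields the exact constant, but the crux you flag is not closed by the tools you name: the bound $h^+(\alpha)\ge h(y)+\tfrac{1}{2}d_r(x,y)-96\delta$ from Lemma \ref{LEM0} concerns the geodesic between the two points and says nothing about where the two \emph{verticals} come close, and the shifted points are not at equal height (their height gap is the same $\Delta h(x,y)$ as for $x,y$). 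What actually closes it is elementary: if $d\big(V_1,V_2\big)\le 288\delta$ at some height $h_0\ge h(y)$ (and the height furnished by your application of Lemma \ref{LEM0} is automatically $\ge h(y)$), then the path going up $V_1$ from $x$ to height $h_0$, across, and down $V_2$ to $y$ gives $d(x,y)\le 2\big(h_0-h(y)\big)+\Delta h(x,y)+288\delta$; since $d(x,y)=\Delta h(x,y)+d_r(x,y)$ by definition, $h_0\ge h(y)+\tfrac{1}{2}d_r(x,y)-144\delta$. Feeding this into your Lemma \ref{LEM0} application gives $d_r\big(V_1(t_1+s),V_2(t_2+s)\big)\ge 2t-288\delta$, as required. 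With that two-line pinning argument substituted for the step you sketched, your proof is complete; as written, that step is the one genuine soft spot.
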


\begin{figure}
\begin{center}
\includegraphics[scale=0.35]{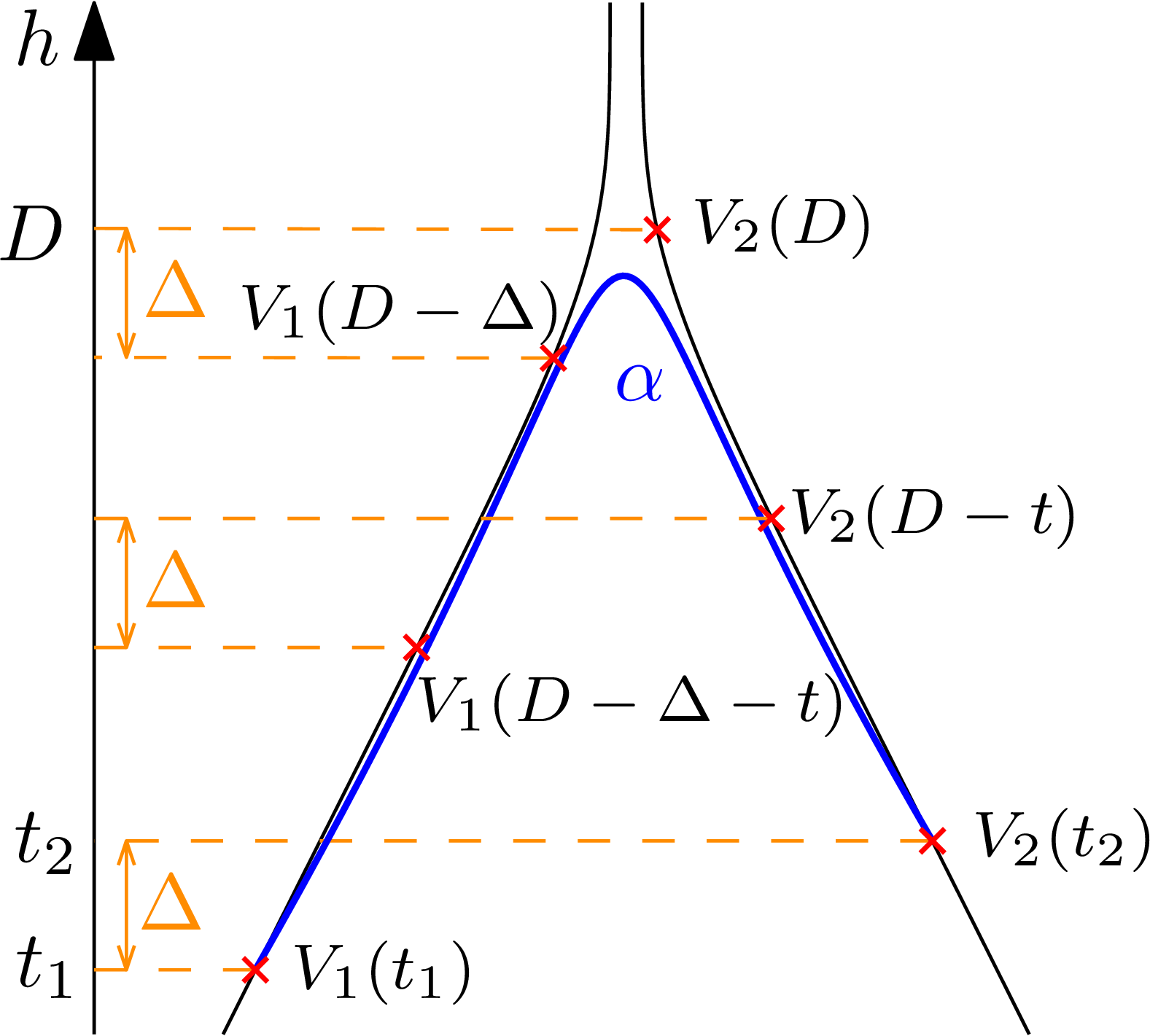} 
\caption{Proof of Lemma \ref{ControleRelativeDistGoingBackward}}\label{FigBackwardControl}
\end{center}
\end{figure}

\begin{proof}
To simplify the computations, we use the following notations, $D:=t_2+\frac{1}{2}d_r(V_1(t_1),V_2(t_2))$ and $\Delta=|t_1-t_2| $. The term $\Delta$ is the difference of height between $V_1(t_1)$ and $V_2(t_2)$ since vertical geodesics are parametrised by their height. Then we have to prove that $\forall t\in\left[0,\frac{1}{2}d_r(V_1(t_1),V_2(t_2))\right]$, $|d_r(V_1(D-\Delta-t),V_2(D-t))-2t|\leq 288\delta$. We can assume without loss of generality that $t_1\leq t_2$. Lemma \ref{LEM0} applied with $x=V_1(t_1)$ and with $y=V_2(t_2)$ gives us $d(V_1(D),V_2(D))\leq 288\delta$. Furthermore, the relative distance is smaller than the distance, hence $d_r(V_1(D),V_2(D))\leq 288\delta$. Now, if we move the two points backward from $V_1(D-\Delta)$ and $V_2(D)$ along $V_1$ and $V_2$, we have for $t\in[0,D]$:
\begin{align}
d_r(V_1(D-\Delta-t),V_2(D-t))=&d(V_1(D-\Delta-t ),V_2(D-t))-\Delta
\\\leq &d(V_1(D-\Delta-t ),V_1(D-\Delta))+d(V_1(D-\Delta),V_2(D))\nonumber
\\&+d(V_2(D ),V_2(D-t))-\Delta, \nonumber
\\&\text{furthermore }V_1\text{ and }V_2\text{ are geodesics, then:}\nonumber
\\\leq &t+d(V_1(D-\Delta),V_1(D))+d(V_1(D),V_2(D))+t-\Delta  \nonumber
\\\leq &t+\Delta+288\delta+t-\Delta\leq 2t+288\delta.\label{ControlDrelatDown}
\end{align}
Let us consider a geodesic $\alpha$ between $V_1(t_1)$ and $V_2(t_2)$. Since $H$ is a Busemann space, and thanks to Lemma \ref{LEM0} we have $d\left(V_1(D-\Delta-t),\alpha(D-\Delta-t_1-t)\right)\leq 144\delta$ and $d\left(V_2(D-t),\alpha(D-t_1+t)\right)\leq 144\delta$. Then the second part of our inequality follows:
\begin{align}
d_r(V_1(D-\Delta-t),V_2(D-t))=&d(V_1(D-\Delta-t ),V_2(D-t))-\Delta \nonumber
\\\geq& d(\alpha(D-\Delta-t_1-t),\alpha(D-t_1+t))\nonumber
\\&-d(V_1(D-\Delta-t ),\alpha(D-\Delta-t_1-t))\nonumber
\\&-d(V_2(D-t),\alpha(D-t_1+t))-\Delta\nonumber
\\\geq& d(\alpha(D-\Delta-t_1-t),\alpha(D-t_1+t)) -288\delta-\Delta\nonumber
\\\geq& 2t+\Delta -288\delta-\Delta\geq 2t -288\delta.\label{ControlDrelatUp}
\end{align}
\end{proof}

The next lemma is a slight generalisation of Lemma \ref{LemmeAmande}. The difference being that we control the length of a path with its maximal height instead of the distance between the projection of its extremities on a horosphere.

\begin{lemma}\label{ExpLengthWhenBelowSameHeight}
Let $\delta\geq 1$ and $H$ be a proper, $\delta$-hyperbolic, Busemann space. Let $x,y\in H$ such that $h(x)\leq h(y)$. Let $\alpha$ be a path connecting $x$ to $y$ with $h^+(\alpha)\leq h(y)+\frac{1}{2}d_r(x,y)-\Delta H$ and where $\Delta H$ is a positive number such that $\Delta H> 555\delta$. Then:
\begin{align*}
l(\alpha)\geq d(x,y)+2^{-530}2^{\frac{1}{\delta}\Delta H}-2\Delta H-24\delta.
\end{align*} 
\end{lemma}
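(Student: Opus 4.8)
The strategy is to reduce Lemma~\ref{ExpLengthWhenBelowSameHeight} to Lemma~\ref{LemmeAmande} by choosing the right pair of vertical geodesics and the right height $t_0$. Start from vertical geodesics $V_x$ and $V_y$ through $x$ and $y$. The hypothesis controls $h^+(\alpha)$ in terms of $h(y)+\tfrac{1}{2}d_r(x,y)-\Delta H$, so the natural level at which to ``cap off'' the path is $t_0:=h(y)+\tfrac12 d_r(x,y)-\Delta H$; set $x_0:=V_x(t_0)$ and $y_0:=V_y(t_0)$. The first task is to show $t_0\geq\max(h(x),h(y))$ so that Lemma~\ref{LemmeAmande} applies: since $h(x)\leq h(y)$ this amounts to $\tfrac12 d_r(x,y)\geq\Delta H$, which I would get from $h^+(\alpha)\geq h(x)$ (the path starts at $x$) together with the hypothesis $h^+(\alpha)\le h(y)+\tfrac12 d_r(x,y)-\Delta H$ and $h(x)\le h(y)$ — wait, more carefully: one must also handle the degenerate possibility that $\tfrac12 d_r(x,y)$ is not much bigger than $\Delta H$. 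This is where the assumption $\Delta H>555\delta$ enters, providing enough slack.

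The second task is to estimate $d(x_0,y_0)$ from below in terms of $\Delta H$. Here I would use Lemma~\ref{LinkDrAndDSameHeight} and Lemma~\ref{ControleRelativeDistGoingBackward} (Backwards control). By Lemma~\ref{LinkDrAndDSameHeight} applied to $x$ and $y$ (recalling $h(x)\le h(y)$), $d_r(x,y)$ is within $54\delta$ of $d(V_x(h(y)),y)$. Then I want to transport this relative distance up to height $t_0 = h(y)+\tfrac12 d_r(x,y) - \Delta H$. Running Lemma~\ref{ControleRelativeDistGoingBackward} with $V_1=V_x$, $V_2=V_y$ and the appropriate choice of base times, moving \emph{backward} by $t=\Delta H$ from the ``meeting height'' $h(y)+\tfrac12 d_r(x,y)$, gives that $d_r(x_0,y_0)$ is within $288\delta$ of $2\Delta H$; since at equal heights $d_r=d$ (the height difference vanishes), this yields $d(x_0,y_0)\geq 2\Delta H-288\delta$. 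Combined with $\Delta H>555\delta$ this gives $d(x_0,y_0)>768\delta$, so the hypothesis of Lemma~\ref{LemmeAmande} is met, and moreover $\tfrac{1}{2\delta}d(x_0,y_0)\geq \tfrac1\delta\Delta H - \tfrac{144\delta}{\delta}$-ish, i.e.\ $\tfrac{1}{2\delta}d(x_0,y_0)\ge \tfrac1\delta\Delta H-144$.

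The third and final task is bookkeeping. Since $h^+(\alpha)\le t_0=h(x_0)$, Lemma~\ref{LemmeAmande} gives
\[
l(\alpha)\ge \Delta h(x,x_0)+\Delta h(y,y_0)+2^{-386}2^{\frac{1}{2\delta}d(x_0,y_0)}-24\delta.
\]
Now $\Delta h(x,x_0)=t_0-h(x)$ and $\Delta h(y,y_0)=t_0-h(y)$, and I want to compare their sum to $d(x,y)$. Using $d(x,y)=d_r(x,y)+(h(y)-h(x))$ and $t_0=h(y)+\tfrac12 d_r(x,y)-\Delta H$, a direct computation gives $\Delta h(x,x_0)+\Delta h(y,y_0)=d_r(x,y)+(h(y)-h(x))-2\Delta H=d(x,y)-2\Delta H$. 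For the exponential term, substitute $2^{\frac{1}{2\delta}d(x_0,y_0)}\ge 2^{\frac1\delta\Delta H}\cdot 2^{-144}$, so $2^{-386}2^{\frac{1}{2\delta}d(x_0,y_0)}\ge 2^{-530}2^{\frac1\delta\Delta H}$, matching the stated constant $2^{-530}$. Putting the pieces together yields exactly $l(\alpha)\ge d(x,y)+2^{-530}2^{\frac1\delta\Delta H}-2\Delta H-24\delta$.

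The main obstacle is the middle step: correctly applying the Backwards control lemma (Lemma~\ref{ControleRelativeDistGoingBackward}) with base times chosen so that moving backward by exactly $\Delta H$ lands on the height $t_0$, and checking that the ``meeting height'' $h(y)+\tfrac12 d_r(x,y)$ is indeed within $O(\delta)$ of where the two vertical geodesics are $288\delta$-close — one has to be careful that the base point heights $t_1,t_2$ for $V_1,V_2$ are $h(x)$ and $h(y)$, that $D$ in that lemma equals $h(y)+\tfrac12 d_r(x,y)$, and that the constraint $t\in[0,\tfrac12 d_r(V_1(t_1),V_2(t_2))]$ is satisfied for $t=\Delta H$, which again uses $\tfrac12 d_r(x,y)\ge\Delta H$. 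Tracking all the additive $\delta$-constants so that they collapse into the clean bound is routine but must be done with care; the hypothesis $\Delta H>555\delta$ is presumably calibrated precisely to absorb them.
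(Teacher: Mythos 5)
Your plan is essentially the paper's proof: cap the path at the level $t_0=h(y)+\tfrac{1}{2}d_r(x,y)-\Delta H$, bound $d(x_0,y_0)\geq 2\Delta H-O(\delta)>768\delta$ by combining the backwards-control Lemma \ref{ControleRelativeDistGoingBackward} with Lemma \ref{LinkDrAndDSameHeight}, then apply Lemma \ref{LemmeAmande} and conclude with the same bookkeeping $\Delta h(x,x_0)+\Delta h(y,y_0)=d(x,y)-2\Delta H$. The only point to fix is your derivation of $\tfrac{1}{2}d_r(x,y)\geq\Delta H$: it follows at once from $h^+(\alpha)\geq h(y)$ (the path ends at $y$), not from $h^+(\alpha)\geq h(x)$, which is exactly how the paper gets it; with that, your argument matches the paper's, including the harmless slack in the constants ($2\Delta H-342\delta$ versus $2\Delta H-288\delta$), an imprecision the paper's own computation shares.
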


\begin{figure}
\begin{center}
\includegraphics[scale=0.35]{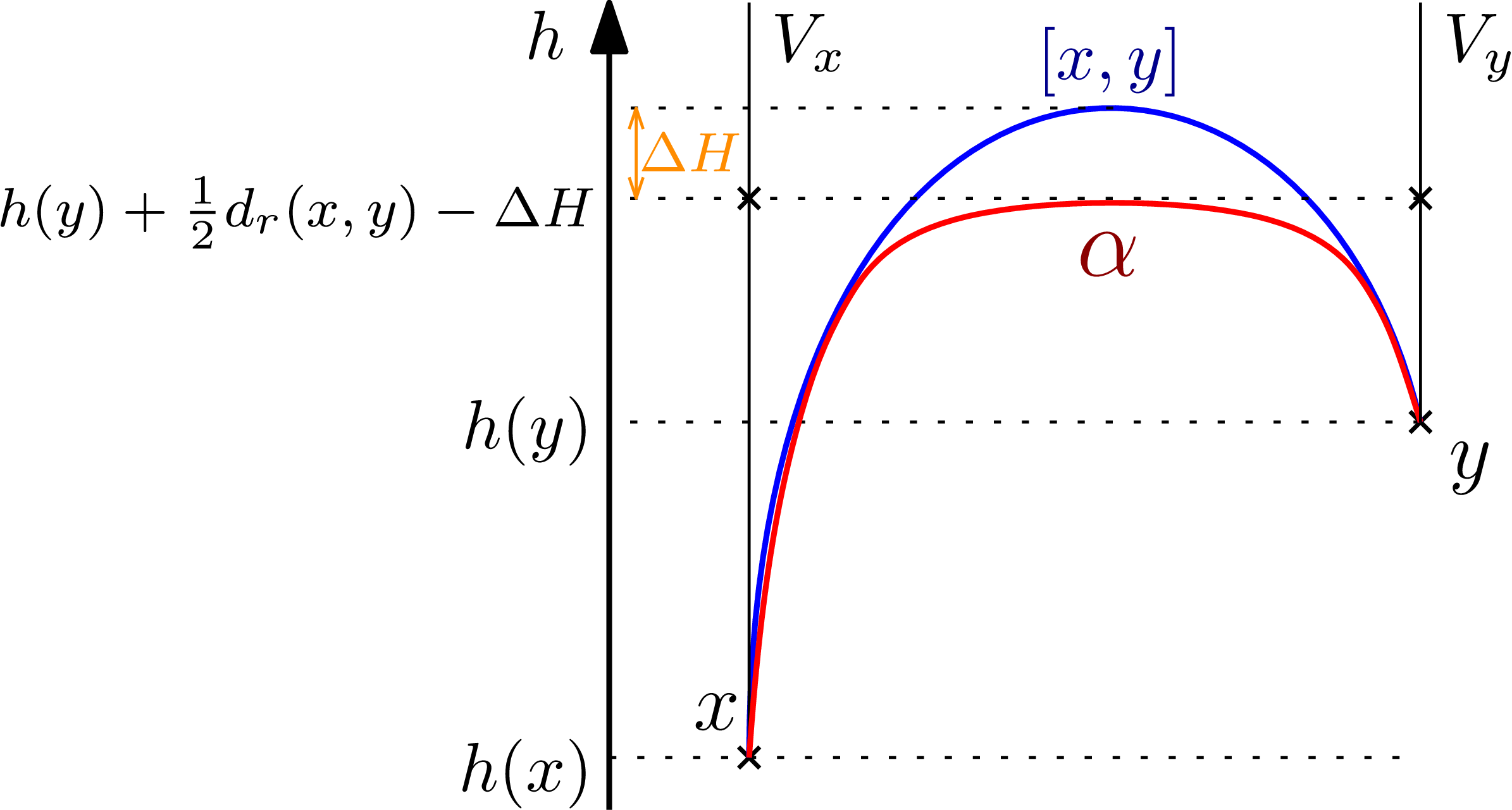}
\captionof{figure}{Proof of Lemma \ref{ExpLengthWhenBelowSameHeight}}
\label{FigExpLengthWhenBelowSameHeight}
\end{center}
\end{figure}

\begin{proof}
This proof is illustrated in Figure \ref{FigExpLengthWhenBelowSameHeight}. Since $h^+(\alpha)\geq h(y)$ we have that $\frac{1}{2}d_r(x,y)\geq \Delta H$. Applying Lemma \ref{ControleRelativeDistGoingBackward} with $V_1=V_x$, $V_2=V_y$, $t_1=h(x)$, $t_2=h(y)$ and $t=\Delta H$ we have:
\begin{align*}
\left|d_r\left(V_x\left(h(x)+\frac{1}{2}d_r(x,y)-\Delta H\right),V_y\left(h(y)+\frac{1}{2}d_r(x,y)-\Delta H\right)\right)-2\Delta H\right|\leq 288\delta.
\end{align*}
Then we have:
\begin{align*}
d_r\left(V_x\left(h(x)+\frac{1}{2}d_r(x,y)-\Delta H\right),V_y\left(h(y)+\frac{1}{2}d_r(x,y)-\Delta H\right)\right)\geq 2\Delta H- 288\delta.
\end{align*}
Furthermore, Lemma \ref{LinkDrAndDSameHeight} applied on $V_x\left(h(x)+\frac{1}{2}d_r(x,y)-\Delta H\right)$ and $V_y\left(h(y)+\frac{1}{2}d_r(x,y)-\Delta H\right)$ gives (notice that the only difference between the two sides of the following inequality is the height in the vertical geodesic $V_x$):
\begin{align*}
&d_r\left(V_x\left(h(x)+\frac{1}{2}d_r(x,y)-\Delta H\right),V_y\left(h(y)+\frac{1}{2}d_r(x,y)-\Delta H\right)\right)\\&\leq d\left(V_x\left(h(y)+\frac{1}{2}d_r(x,y)-\Delta H\right),V_y\left(h(y)+\frac{1}{2}d_r(x,y)-\Delta H\right)\right)+54\delta.
\end{align*} 
Then:
\begin{align}
d\left(V_x\left(h(y)+\frac{1}{2}d_r(x,y)-\Delta H\right),V_y\left(h(y)+\frac{1}{2}d_r(x,y)-\Delta H\right)\right)\geq 2\Delta H -342\delta>768\delta.\label{ControleDHInproof}
\end{align}
Let us denote $t_0=h(y)+\frac{1}{2}d_r(x,y)-\Delta H$. Thanks to inequality (\ref{ControleDHInproof}) the hypothesis of Lemma \ref{LemmeAmande} holds with $x_0=V_x\left(h(y)+\frac{1}{2}d_r(x,y)-\Delta H\right)$ and $y_0=V_y\left(h(y)+\frac{1}{2}d_r(x,y)-\Delta H\right) $. Applying this lemma on $\alpha$ provides:
\begin{align*}
l(\alpha)&\geq\Delta h(x,x_0)+\Delta h(y,y_0) +2^{-386}2^{\frac{1}{2\delta}d(x_0,y_0)}-24\delta
\\&\geq h(y)+\frac{1}{2}d_r(x,y)-\Delta H-h(x)+h(y)+\frac{1}{2}d_r(x,y)-\Delta H-h(y)+2^{-386}2^{\frac{1}{2\delta}d(x_0,y_0)}-24\delta 
\\&\geq \Delta h(y,x)+d_r(y,x)-2\Delta H+2^{-386}2^{\frac{1}{2\delta}d(x_0,y_0)}-24\delta 
\\&\geq d(x,y)-2\Delta H+2^{-386}2^{\frac{1}{2\delta}(2\Delta H -288\delta)}-24\delta \text{, by equation (\ref{ControleDHInproof}).}
\\&\geq d(x,y)+2^{-530}2^{\frac{1}{\delta}\Delta H}-2\Delta H-24\delta .
\end{align*}
\end{proof}

This previous lemma tells us that a path needs to reach a sufficient height for its length not to increase to much. We give now a generalisation of Lemma \ref{ExpLengthWhenBelowSameHeight}, where the path reaches a given low height before going to its end point. This proposition will be the central result for the understanding of the geodesic shapes in a horospherical product.

\begin{propo}\label{ExpLengthWhenBelowAndReachPoint}
Let $\delta\geq 1$ and $H$ be a proper, $\delta$-hyperbolic, Busemann space. Let $x,y,m\ \in H$ such that $h(m)\leq h(x)\leq h(y)$ and let $\alpha:[0,T]\to H$ be a path connecting $x$ to $y$ such that $h^-(\alpha)=h(m)$. With the notation $\Delta H=h(y)+\frac{1}{2}d_r(x,y)-h^+(\alpha)$ we have:
\begin{align*}
l(\alpha)\geq 2\Delta h(x,m)+d(x,y)+2^{-850}2^{\frac{1}{\delta}\Delta H}-1-\max(0,2\Delta H)-1700\delta.
\end{align*} 
\end{propo}

\begin{figure}
\begin{center}
\includegraphics[scale=0.35]{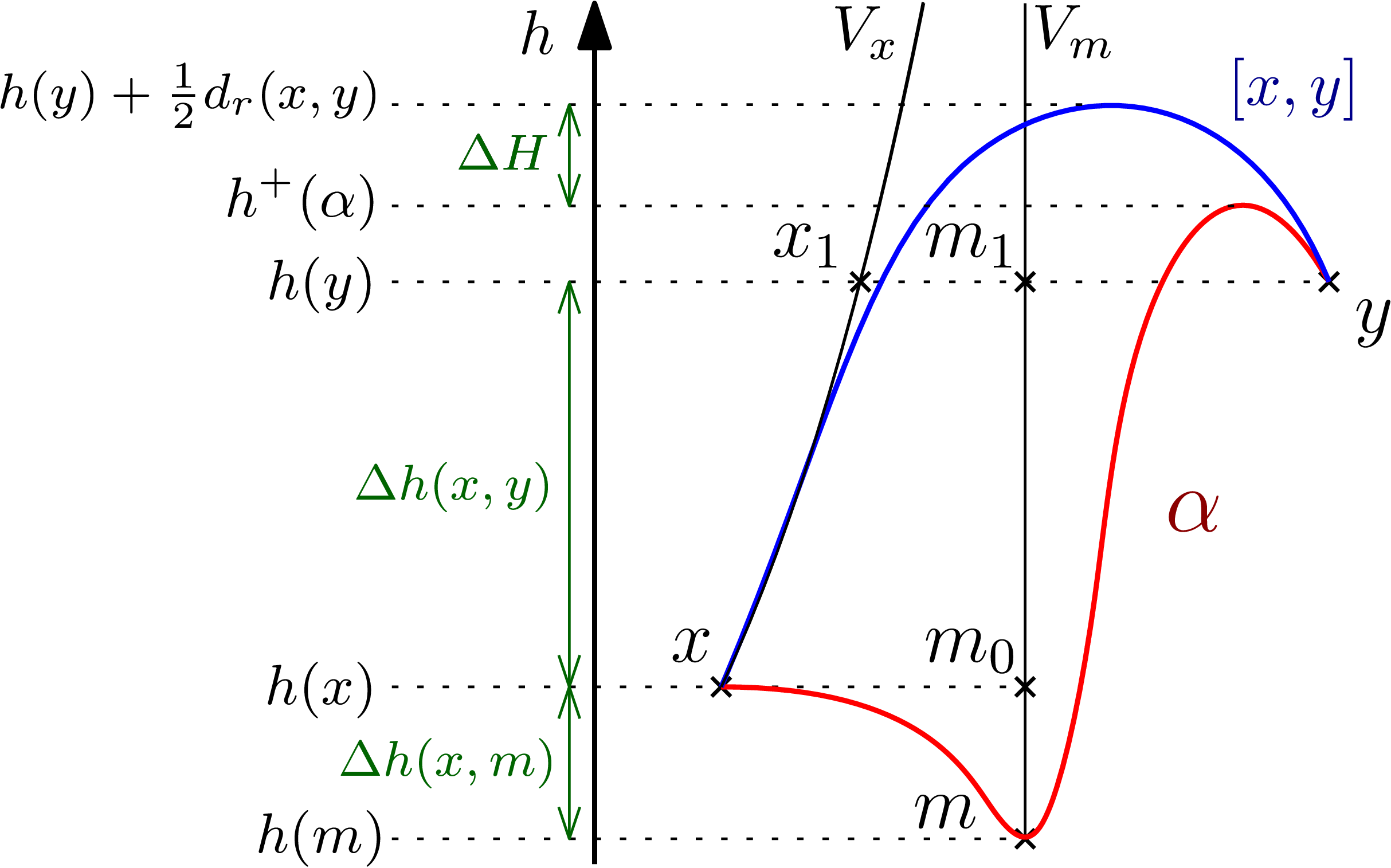}
\captionof{figure}{Proof of Proposition \ref{ExpLengthWhenBelowAndReachPoint}}
\label{FigExpLengthWhenBelowAndReachPoint}
\end{center}
\end{figure}

\begin{proof}
This proof is illustrated in Figure \ref{FigExpLengthWhenBelowAndReachPoint}. We first assume that $\Delta H>850\delta$, we postpone the other cases to the end of this proof. Let $V_x$ and $V_m$ be vertical geodesics respectively containing $x$ and $m$. We call $x_1=V_x(h(y))$ and $m_1=V_m(h(y))$ the points of $V_x$ and $V_m$ at height $h(y)$. First, Lemma \ref{LinkDrAndDSameHeight} provides $|d(x_1,y)-d_r(x,y)|\leq 54\delta$. Then we consider a geodesic triangle between the three points $x_1$, $m_1$ and  $y$. Lemma \ref{LEM0} tells us that $h^+([x_1,y])\geq h(y)+\frac{1}{2} d_r(x_1,y)-96\delta\geq h(y)+\frac{1}{2}d_r(x,y)-123\delta$. Since $[x_1,y]$ is included in the $\delta$-neighbourhood of the two other sides of the geodesic triangle, one of the two following inequalities holds:
\begin{align*}
1)&\ h^+([x_1,m_1])\geq h(y)+\frac{1}{2}d_r(x,y)-124\delta
\\2)&\ h^+([m_1,y])\geq h(y)+\frac{1}{2}d_r(x,y)-124\delta.
\end{align*} 
\hspace*{0.6cm}We first assume $1)$ that $h^+([x_1,m_1])\geq h(y)+\frac{1}{2}d_r(x,y)-124\delta$, hence:
\begin{equation}
d(x_1,m_1)\geq d_r(x,y)-248\delta.\label{UseInRP2}
\end{equation}
Let us denote $m_0=V_m(h(x))$ the point of $V_m$ at height $h(x)$. By considering the $2\delta $-slim quadrilateral between the points $x,x_1,m_0,m_1$ we have that $[x_1,m_1]$ is in the $2\delta$- neighbourhood of $[x_1,x]\cup[x,m_0]\cup[m_0,m]$. Furthermore $d_r(x,y)\geq2(h^+(\alpha)-h(y))+2\Delta H\geq 2\Delta H\geq 1700\delta$ by assumption, then $h^+([x_1,m_1])\geq h(y)+\frac{1}{2}d_r(x,y)-124\delta\geq h(y)+726\delta$. Since $h^+([x_1,x])=h^+([m_0,m_1])=h(y)$ we have that $h^+([x,m_0])\geq h^+([x_1,m_1])-2\delta\geq h(y)+724\delta$. Moreover:
\begin{align*}
d_r(x,m_0)=d(x,m_0)\geq h^+([x,m_0])-h(x)\geq h(y)-h(x)+724\delta \geq \Delta h(x,y)+724\delta,
\end{align*} 
which allows us to use Lemma \ref{ControleRelativeDistGoingBackward} on $V_x$ and $V_m$ with $t=\frac{1}{2}d_r(x,m_0)-\Delta h(x,y)\geq 0$ and $t_1=t_2=h(x)$. It gives:
\begin{align*}
\left| d_r\Big(V_x\big(h(x)+\Delta h(x,y)\big),V_m\big(h(x)+\Delta h(x,y)\big)\Big)-d_r(x,m_0)+2\Delta h(x,y)\right|\leq 288\delta,
\end{align*}
which implies in particular:
\begin{equation}
d_r\Big(V_x\big(h(y)\big),V_m\big(h(y)\big)\Big)+2\Delta h(x,y)-288\delta\leq d_r(x,m_0).\label{UseInRP3}
\end{equation} 
Combining inequalities (\ref{UseInRP2}) and (\ref{UseInRP3}) we have $d(x,m_0)=d_r(x,m_0)\geq d_r(x,y)+2\Delta h(x,y)-536\delta$. Lemma \ref{LinkDrAndDSameHeight} used on $x$ and $m$ then gives:
\begin{align}\label{UseInRP1}
d_r(x,m)\geq d(x,m_0)-54\delta\geq d_r(x,y)+2\Delta h(x,y)-590\delta.
\end{align} 
Let us denote $\alpha_1$ the part of $\alpha$ linking $x$ to $m$ and $\alpha_2$ the part of $\alpha$ linking $m$ to $y$. We have:
\begin{align*}
h^+(\alpha_1)\leq& h^+(\alpha)\leq h(y)+\frac{1}{2}d_r(x,y)-\Delta H \leq h(x)+\Delta h(x,y)+\frac{1}{2}d_r(x,y)-\Delta H 
\\\leq& h(x) +\frac{1}{2}\left(2\Delta h(x,y)+d_r(x,y)\right)-\Delta H \leq h(x)+\frac{1}{2}\left(d_r(x,m)+590\delta\right)-\Delta H \text{, by inequality }(\ref{UseInRP1}).
\\\leq& h(x)+\frac{1}{2}d_r(x,m)+295\delta-\Delta H \leq h(x)+\frac{1}{2}d_r(x,m)-\Delta H',
\end{align*}
with $\Delta H '=\Delta H-295\delta$. By assumption $\Delta H>850\delta$, hence $\Delta H'> 555 \delta$ which allows us to apply Lemma \ref{ExpLengthWhenBelowSameHeight} on $\alpha_1$. It follows:
\begin{align*}
l(\alpha_1)\geq&  d(x,m)+2^{-530}2^{\frac{1}{\delta}\Delta H'}-2\Delta H'-24\delta 
\\\geq & \Delta h(x,m)+d_r(x,m)+2^{-825}2^{\frac{1}{\delta}\Delta H}-2\Delta H-614\delta \text{, since }\Delta H'=\Delta H-295\delta.
\\\geq & \Delta h(x,m)+d_r(x,y)-590\delta+2^{-825}2^{\frac{1}{\delta}\Delta H}-2\Delta H-614\delta \text{, by inequality (\ref{UseInRP1})}
\\\geq & \Delta h(x,m)+d_r(x,y)+2^{-825}2^{\frac{1}{\delta}\Delta H}-2\Delta H-1204\delta .
\end{align*} 
We use in the following inequalities that $l(\alpha_2)\geq d(m,y)\geq\Delta h(m,y)$, we have:
\begin{align*}
l(\alpha)&\geq l(\alpha_1)+l(\alpha_2) \geq \Delta h(x,m)+d_r(x,y)+2^{-825}2^{\frac{1}{\delta}\Delta H}-2\Delta H-1204\delta  +\Delta h (m,y)
\\&\geq 2\Delta h(x,m)+\Delta h(x,y)+d_r(x,y)+2^{-825}2^{\frac{1}{\delta}\Delta H}-2\Delta H-1204\delta
\\&\geq 2\Delta h(x,m)+d(x,y)+2^{-825}2^{\frac{1}{\delta}\Delta H}-2\Delta H-1204\delta
\\&\geq 2\Delta h(x,m)+d(x,y)+2^{-850}2^{\frac{1}{\delta}\Delta H}-1-2\Delta H-1700\delta,
\\&\geq 2\Delta h(x,m)+d(x,y)+2^{-850}2^{\frac{1}{\delta}\Delta H}-1-\max(0,2\Delta H)-1700\delta\text{, since }\Delta H>850\delta\geq 0,
\end{align*}
which ends the proof for case 1).
\\\\Now assume that $2)$ holds, which is $h^+([m_1,y])\geq h(y)+\frac{1}{2}d_r(x,y)-124\delta$. It implies $d(m_1,y)\geq d_r(x,y)-248\delta$, then:
\begin{align*}
h^+(\alpha_2)\leq& h^+(\alpha)\leq h(y)+\frac{1}{2}d_r(x,y)-\Delta H \leq h(y)+\frac{1}{2}d_r(m_1,y)+124\delta-\Delta H 
\\&\leq h(y)+\frac{1}{2}d_r(m_1,y)-\Delta H '',
\end{align*}
with $\Delta H ''=\Delta H-124\delta$. Lemma \ref{LinkDrAndDSameHeight} provides us with:
\begin{equation}
d_r(m,y)\geq d(m_1,y)-54\delta\geq d_r(x,y)-302\delta.\label{UseInRP4}
\end{equation}
Since $\Delta H> 850\delta$, we have $\Delta H''> 726\delta$ which allows us to apply Lemma \ref{ExpLengthWhenBelowSameHeight} on $\alpha_2$. It follows that:
\begin{align*}
l(\alpha_2)\geq&  d(y,m)+2^{-530}2^{\frac{1}{\delta}\Delta H''}-2\Delta H''-24\delta 
\\\geq & \Delta h(y,m)+d_r(y,m)+2^{-654}2^{\frac{1}{\delta}\Delta H}-2\Delta H-272\delta \text{, since }\Delta H''=\Delta H-124\delta.
\\\geq & \Delta h(y,m)+d_r(x,y)+2^{-654}2^{\frac{1}{\delta}\Delta H}-2\Delta H-574\delta\text{, by inequality (\ref{UseInRP3})} .
\end{align*} 
Hence:
\begin{align*}
l(\alpha)&\geq l(\alpha_1)+l(\alpha_2) \geq \Delta h(x,m)+\Delta h(y,m)+d_r(x,y)+2^{-654}2^{\frac{1}{\delta}\Delta H}-2\Delta H-574\delta 
\\&\geq 2\Delta h(x,m)+\Delta h(y,x)+d_r(x,y)+2^{-654}2^{\frac{1}{\delta}\Delta H}-2\Delta H-574\delta 
\\&\geq 2\Delta h(x,m)+d(x,y)+2^{-654}2^{\frac{1}{\delta}\Delta H}-2\Delta H-574\delta 
\\&\geq 2\Delta h(x,m)+d(x,y)+2^{-850}2^{\frac{1}{\delta}\Delta H}-1-\max(0,2\Delta H)-1700\delta.
\end{align*}
There remains to treat the case when $\Delta H \leq 850\delta$, where $\Delta H=h(y)+\frac{1}{2}d_r(x,y)-h^+(\alpha)$. Let $n$ denote a point of $\alpha$ such that $h(n)=h^+(\alpha)$. If $m$ comes before $n$, we have $l(\alpha)\geq d(x,m)+d(m,n)+d(n,y)$. Otherwise $n$ comes before $m$ and we have $l(\alpha)\geq d(x,n)+d(n,m)+d(m,y)$. Since $h(m)\leq h(x)\leq h(y)\leq h(n)$ we always have:
\begin{align*}
l(\alpha)&\geq \Delta h(x,m)+\Delta h(m,n)+\Delta h(n,y)
\\&\geq \Delta h(x,m) + \Delta h(m,x) +\Delta h(x,y)+\Delta h(y,n) + \Delta h(y,n)
\\&\geq 2\Delta h(x,m)  +\Delta h(x,y)+2(h^+(\alpha)-h(y))
\\&\geq 2\Delta h(x,m)  +\Delta h(x,y)+d_r(x,y)-2\Delta H\geq 2 \Delta h(m,x) +d(x,y)-1700\delta.
\end{align*}
Furthermore $\Delta H \leq 850\delta$, then $2^{-850}2^{\frac{1}{\delta}\Delta H}\leq 1$. Therefore:
\begin{align*}
l(\alpha)&\geq 2 \Delta h(m,x) +d(x,y)+2^{-850}2^{\frac{1}{\delta}\Delta H}-1-\max(0,2\Delta H)-1700\delta,
\end{align*}
which ends the proof for the remaining case.
\end{proof}

\subsection{Length of geodesic segments in horospherical products}

From now on, unless otherwise specified, $X$ and $Y$ will always be two proper, geodesically complete, $\delta$-hyperbolic, Busemann spaces with $\delta\geq 1$, and $N$ will always be an admissible norm. Let $p$ and $q$ be two points of $X\bowtie Y$, and let $\alpha$ be a geodesic of $ X\bowtie Y $ connecting them. We first prove an upper bound on the length of $\alpha$ by computing the length of a path $\gamma\subset X\bowtie Y $ linking $p$ to $q$ 

\begin{lemma}\label{PathConnectingPointsInHoroProduct}
Let $p=(p_X,p_Y)$ and $q=(q_X,q_Y)$ be points of the horospherical product $X\bowtie  Y$. There exists a path $\gamma$ connecting $p$ to $q$ such that:
\begin{align*}
l_N(\gamma)\leq d_r(p_Y,q_Y)+d_r(p_X,q_X)+\Delta h(p,q)+1152\delta C_N.
\end{align*}
\end{lemma}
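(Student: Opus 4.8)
The plan is to build $\gamma$ explicitly as a concatenation of three pieces that move "correctly" in the two components: first a piece raising the $H_p$-component up a vertical geodesic while lowering the $H_q$-component, then a piece that does the analogous thing in the other component, then a piece that comes back down. More precisely, let $h^* $ be a height slightly above $\max(h(x),h(y))$ that I will choose of the order of $h(y) + \tfrac12 d_r(x_p,y_p) + \tfrac12 d_r(x_q,y_q)$ (the natural "meeting height" dictated by Lemma \ref{LEM0} and Lemma \ref{LinkDrAndDSameHeight}). Using Property \ref{ExistsVertGeodInx}, fix vertical geodesics $V_{x_p}$ through $x_p$ in $H_p$ and $V_{y_q}$ through $y_q$ in $H_q$. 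I first move from $x=(x_p,x_q)$ to $(V_{x_p}(h^*), V_{x_q}(-h^*))$ by sliding up $V_{x_p}$ in $H_p$ and forcing the $H_q$-coordinate to track the opposite height along some vertical geodesic through $x_q$; since the height is $1$-Lipschitz, the $H_q$-part of this path is $1$-Lipschitz in the arclength parameter of $V_{x_p}$, so by Property \ref{ProprSplitLenght} this piece has $N$-length at most $C_N\,\Delta h(x_p, V_{x_p}(h^*))$. The same construction on the other side produces a piece from $(V_{y_p}(h^*), V_{y_q}(h^*))$ down to $y$ of $N$-length at most $C_N\,\Delta h(y_q, V_{y_q}(h^*))$.

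The middle piece connects $(V_{x_p}(h^*), V_{x_q}(-h^*))$ to $(V_{y_p}(h^*), V_{y_q}(h^*))$. Here the $H_p$-component must travel from $V_{x_p}(h^*)$ to $V_{y_p}(h^*)$ — two points at the common height $h^*$ — and the $H_q$-component from $V_{x_q}(-h^*)$ to $V_{y_q}(-h^*)$, again at common height. Along horospheres one can move "horizontally" in each component while keeping the heights opposite, so I take geodesics in $H_p$ and $H_q$ joining these respective pairs and splice them diagonally, again using that the heights can be matched (both start and end at equal heights, and a geodesic between two points at the same height $h$ stays, by Lemma \ref{LinkDrAndDSameHeight} and Lemma \ref{LEM0}, with controlled height excursion, which is what forces $h^*$ to be chosen with the $\tfrac12 d_r$ margin). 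By Property \ref{ProprSplitLenght} the $N$-length of this middle piece is at most $\tfrac{C_N}{2}$ times the sum of the two component lengths, and those lengths are controlled by $d(V_{x_p}(h^*),V_{y_p}(h^*))$ and $d(V_{x_q}(-h^*),V_{y_q}(-h^*))$, each of which is $d_r(x_p,y_p)+O(\delta)$, resp. $d_r(x_q,y_q)+O(\delta)$, by Lemma \ref{LinkDrAndDSameHeight} applied twice to slide the common height down to $h(\max)$.

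Adding up, the three contributions give $N$-length bounded by
$C_N\big(\Delta h(x_p,V_{x_p}(h^*)) + \Delta h(y_q,V_{y_q}(h^*))\big) + \tfrac{C_N}{2}\big(d_r(x_p,y_p)+d_r(x_q,y_q)+O(\delta)\big)$,
and since $h^* - h(x)$ and $h^* - h(y)$ are each of the form $\Delta h(x,y) + \tfrac12 d_r(x_p,y_p)+\tfrac12 d_r(x_q,y_q)$ up to $O(\delta)$, the vertical contributions combine with the middle-piece contributions; the $d_r$ terms collect with total coefficient $1$ (the $\tfrac12$ from the middle and the $\tfrac12$ from each vertical slide), the $\Delta h(x,y)$ appears once, and all the $\delta$-errors are absorbed into the constant. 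Tracking the constants from Lemma \ref{LEM0} ($288\delta$) and Lemma \ref{LinkDrAndDSameHeight} ($54\delta$) through the at most four applications, and multiplying by $C_N$, yields the claimed $1152\delta C_N$. The main obstacle is bookkeeping: one must choose $h^*$ precisely enough that every geodesic used in a component genuinely stays below the height to which its partner vertical geodesic has been extended (so the diagonal splicing is legal and the heights remain exactly opposite), and then carry the additive $\delta$-constants through the chain of lemmas without losing the exact coefficient $1$ on each $d_r$ term; the geometric content is entirely in Lemmas \ref{LEM0} and \ref{LinkDrAndDSameHeight} together with the Busemann-space fact that vertical geodesics exist through every point.
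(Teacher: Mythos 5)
There is a genuine gap, and it is strategic rather than a matter of bookkeeping: your path only goes \emph{up}, and in a horospherical product going up contracts distances in the $H_p$-direction but expands them in the $H_q$-direction. Your key claim for the middle piece --- that $d_{H_q}\big(V_{x_q}(-h^*),V_{y_q}(-h^*)\big)=d_r(x_q,y_q)+O(\delta)$ --- is false. Lemma \ref{LinkDrAndDSameHeight} only compares $d_r(x_q,y_q)$ with the distance after raising the \emph{lower} point (in $H_q$) to the level of the higher one; at the level $-h^*$, which lies far \emph{below} both $h_q(x_q)$ and $h_q(y_q)$, the two vertical geodesics of $H_q$ diverge linearly (this is exactly the content of Lemma \ref{ControleRelativeDistGoingBackward}), so
\begin{equation}
d_{H_q}\big(V_{x_q}(-h^*),V_{y_q}(-h^*)\big)\ \approx\ 2\big(h^*-h(x)\big)+d_r(x_q,y_q)\ =\ 2\Delta h(x,y)+d_r(x_p,y_p)+2d_r(x_q,y_q),\nonumber
\end{equation}
up to $O(\delta)$. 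Consequently your middle piece alone has $N$-length at least about $\Delta h(x,y)+\tfrac12 d_r(x_p,y_p)+d_r(x_q,y_q)$, and adding the two vertical slides (of heights $h^*-h(x)$ and $h^*-h(y)$) the total is roughly $2\Delta h(x,y)+\tfrac32 d_r(x_p,y_p)+2d_r(x_q,y_q)$, which exceeds the claimed bound $\Delta h(x,y)+d_r(x_p,y_p)+d_r(x_q,y_q)+1152\delta C_N$ as soon as $\Delta h$ and the relative distances are large. The diagonal splicing of the middle piece is also not innocent: the $H_q$-geodesic between the two points at height $-h^*$ has a large height excursion, and compensating it in $H_p$ to keep $h_p+h_q=0$ adds yet more length; but even granting that, the construction cannot reach the stated constant.

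The correct route (the one the paper takes) exploits the anti-symmetry instead of fighting it: first descend in $\mathcal{H}$ along the vertical geodesic $(V_{x_p},V_{x_q})$ to height $h(x)-\tfrac12 d_r(x_q,y_q)$, where by point 4 of Lemma \ref{LEM0} applied in $H_q$ the verticals $V_{x_q}$ and $V_{y_q}$ are within $288\delta$, so one can switch the $H_q$-coordinate at a cost of at most $576\delta C_N$ (using $d_{\mathcal{H},N}\leq 2C_N(d_{H_p}+d_{H_q})$ from Property \ref{HoroProdConnected}); then ride the vertical geodesic $(V_{x_p},V_{y_q})$ all the way up to height $h(y)+\tfrac12 d_r(x_p,y_p)$, where the same argument in $H_p$ lets you switch to $(V_{y_p},V_{y_q})$ for another $576\delta C_N$; then descend to $y$. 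The three vertical portions contribute exactly $\tfrac12 d_r(x_q,y_q)$, $\Delta h(x,y)+\tfrac12 d_r(x_q,y_q)+\tfrac12 d_r(x_p,y_p)$, and $\tfrac12 d_r(x_p,y_p)$, giving the bound with constant $1152\delta C_N$. In short: the $H_q$-relative distance can only be "lost" by going \emph{down}, and the $H_p$-relative distance by going \emph{up}; a single meeting height above both endpoints cannot do both jobs.
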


\begin{proof}

Without loss of generality, we assume $h(p)\leq h(q)$. One can follow the idea of the proof on Figure \ref{FigureConstructionPathHoroProduct}. We consider $V_{p_X}$ and $V_{q_X}$ two vertical geodesics of $X$ containing $p_X$ and $q_X$ respectively. Similarly let $V_{p_Y}$ and $V_{q_Y}$ be two vertical geodesics of $Y$ containing $p_Y$ and $q_Y$ respectively. We will use them to construct $\gamma$. Let $A_1$ be the point of the vertical geodesic $(V_{p_X},V_{p_Y})\subset X\bowtie Y $ at height $h(p)-\frac{1}{2}d_r(p_Y,q_Y)$ and $A_2$ be the point of the vertical geodesic $(V_{p_X},V_{q_Y})\subset X\bowtie Y $ at the same height $h(p)-\frac{1}{2}d_r(p_Y,q_Y)$. Let $A_3$ be the point of the vertical geodesic $(V_{p_X},V_{q_Y})$ at height $h(q)+\frac{1}{2}d_r(p_X,q_X)$ and $A_4$ be the point of the vertical geodesic $(V_{q_X},V_{q_Y})$ at the same height $h(q)+\frac{1}{2}d_r(p_X,q_X)$. Then $\gamma:=\gamma_1\cup\gamma_2\cup\gamma_3\cup\gamma_4\cup\gamma_5$ is constructed as follows:
\\\\\hspace*{0.6cm}- $\gamma_1$ is the part of $(V_{p_X},V_{p_Y})$ linking $p$ to $A_1$.
\\\hspace*{0.6cm}- $\gamma_2$ is a geodesic linking $A_1$ to $A_2$. Such a geodesic exists by Property \ref{HoroProdConnected}.
\\\hspace*{0.6cm}- $\gamma_3$ is the part of $(V_{p_X},V_{q_Y})$ linking $A_2$ to $A_3$.
\\\hspace*{0.6cm}- $\gamma_4$ is a geodesic linking $A_3$ to $A_4$. Such a geodesic exists by Property \ref{HoroProdConnected}.
\\\hspace*{0.6cm}- $\gamma_5$ is the part of $(V_{q_X},V_{q_Y})$ linking $A_4$ to $q$.
\\\\In fact $A_1$ and $A_2$ are close to each other. Indeed, the two points $A_1=(A_{1,X},A_{1,Y})$ and $A_2=(A_{2,X},A_{2,Y})$ are characterised by the two geodesics $(V_{p_X},V_{p_Y})$ and $(V_{p_X},V_{q_Y})$. Then, because $-h(q)=Y(q_Y)\leq Y(p_Y)$, Lemma \ref{LEM0} applied on $p_Y$ and $q_Y$ in $Y$ gives us $d_{Y}(A_{1,Y},A_{2,Y})\leq 288\delta$. Furthermore Property \ref{HoroProdConnected} provides us with $d_{\bowtie}\leq 2C_N(d_{X}+d_{Y})$, however we have that $A_{1,X}=A_{2,X}$ hence:
\begin{align}
d_{\bowtie}(A_1,A_2)\leq 576\delta C_N.\label{IneqA1A2}
\end{align}
Lemma \ref{LEM0} applied on $p_X$ and $q_X$ provides similarly:
\begin{align}
d_{\bowtie}(A_3,A_4)\leq 576\delta C_N,\label{IneqA3A4}
\end{align}
which gives us:
\begin{align*}
l_N(\gamma)=&l_N(\gamma_1)+l_N(\gamma_2)+l_N(\gamma_3)+l_N(\gamma_4)+l_N(\gamma_5)
\\=& d_{\bowtie}(p,A_1)+d_{\bowtie}(A_1,A_2)+d_{\bowtie}(A_2,A_3)+d_{\bowtie}(A_3,A_4)+d_{\bowtie}(A_4,q)
\\&\text{Since }\gamma_1,\ \gamma_3\text{ and }\gamma_5\text{ are vertical geodesics, we have:}
\\=& \Delta h(p,A_1)+d_{\bowtie}(A_1,A_2)+\Delta h(A_2,A_3)+d_{\bowtie}(A_3,A_4)+\Delta h(A_4,q)
\\=& \frac{1}{2}d_r(p_Y,q_Y)+d_{\bowtie}(A_1,A_2)+\frac{1}{2}d_r(p_Y,q_Y)+\frac{1}{2}d_r(p_X,q_X)+\Delta h(p,q)
\\&+d_{\bowtie}(A_3,A_4)+\frac{1}{2}d_r(p_X,q_X)
\\\leq& d_r(p_Y,q_Y)+d_r(p_X,q_X)+\Delta h(p,q)+1152\delta C_N\text{, by inequalities (\ref{IneqA1A2}) and (\ref{IneqA3A4}).}
\end{align*}

\begin{figure}
\begin{center}
\includegraphics[scale=1.6]{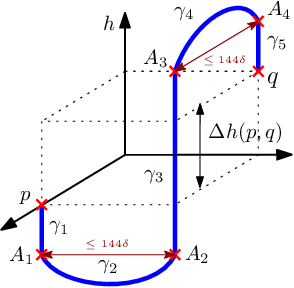}
\captionof{figure}{Construction of the path $\gamma$ when $h(p)\leq h(q)$ for Lemma \ref{PathConnectingPointsInHoroProduct}.}
\label{FigureConstructionPathHoroProduct}
\end{center}
\end{figure}
\end{proof}

We are aiming to use Proposition \ref{ExpLengthWhenBelowAndReachPoint} on the two components $\alpha_X\subset X$ and $\alpha_Y\subset Y$ of $\alpha$ to obtain lower bounds on their lengths. We hence need the following lemma to ensure us that when $\alpha$ is a geodesic, the exponential term in the inequality of Proposition \ref{ExpLengthWhenBelowAndReachPoint} will be small. 

\begin{lemma}\label{AnalyticStudy}
Let $C= 2853\delta C_N+2^{851}$ and let $e:\mathbb{R}\rightarrow\mathbb{R}$ be a map defined by $\forall t\in\mathbb{R}$, $e(t)=\frac{1}{C}2^{C^{-1}t}-2\max(0,t)$. Then $\forall t\in\mathbb{R}$:
\begin{enumerate}
\item $e(t)\geq -7C^2$
\item $(\ e(t)\leq 2853\delta C_N\ )\Rightarrow(\ t\leq 3C^2 \ )$.
\end{enumerate}
\end{lemma}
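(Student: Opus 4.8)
This is a one-variable analytic statement about $e$, so I would simply study the function, keeping in mind that $C$ is enormous: since $\delta\ge 1$ and $C_N\ge 1$ we have $C\ge 2^{851}$, which legitimizes all the coarse numerical inequalities below. First I would record the shape of $e$. On $(-\infty,0]$ we have $e(t)=\frac1C 2^{t/C}>0$, so both conclusions are trivially true there and we may assume $t\ge 0$.

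On $[0,\infty)$ the function $e(t)=\frac1C 2^{t/C}-2t$ is convex, with derivative $e'(t)=\frac{\ln 2}{C^2}2^{t/C}-2$, which is strictly increasing and vanishes at the single point $t_0:=C\log_2\!\bigl(\frac{2C^2}{\ln 2}\bigr)>0$. Hence $e$ decreases on $[0,t_0]$ and increases on $[t_0,+\infty)$, so $e(t)\ge e(t_0)=\frac{2C}{\ln 2}-2C\log_2\!\bigl(\frac{2C^2}{\ln 2}\bigr)$ for every $t\ge 0$ (here $2^{t_0/C}=\frac{2C^2}{\ln 2}$ by definition of $t_0$).

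For item $1.$, when $t\le 0$ then $e(t)>0\ge -7C^2$, and for $t\ge 0$ it suffices to bound $e(t_0)$ from below. Since $\frac{2}{\ln 2}<3$ and $\log_2 C\le C$, one gets $\log_2\!\bigl(\frac{2C^2}{\ln 2}\bigr)<\log_2(3C^2)<2+2C$, hence $e(t_0)>-2C(2+2C)=-4C-4C^2\ge -7C^2$, the last step holding because $4C\le 3C^2$ for $C$ large. For item $2.$ I would argue by contraposition: assuming $t>3C^2$, I show $e(t)>2853\delta C_N$. From the bound on $t_0$ just used, $t_0<2C+2C^2<3C^2$, so $t>3C^2>t_0$ and $e$ is increasing there; thus $e(t)>e(3C^2)=\frac1C 8^{C}-6C^2$. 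The only genuine computation is $8^{C}\ge 8C^3$ for $C\ge 2$: the map $C\mapsto 8^{C}/C^3$ has logarithmic derivative $\ln 8-3/C>0$ on $[2,\infty)$ and equals $8$ at $C=2$. Therefore $e(3C^2)\ge 8C^2-6C^2=2C^2>C>2853\delta C_N$, since $C=2853\delta C_N+2^{851}>2853\delta C_N$; this is the contrapositive of item $2.$

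\textbf{Main obstacle.} There is no conceptual obstacle — it is elementary calculus — but care is needed in bookkeeping the numerical constants and checking that every coarse inequality invoked ($\frac{2}{\ln 2}<3$, $\log_2 C\le C$, $4C\le 3C^2$, $t_0<3C^2$, $8^{C}\ge 8C^3$, $2853\delta C_N<C$) really does hold in the relevant range, which it does because $C\ge 2^{851}$. In particular I would double-check the position of the critical point $t_0$ relative to $3C^2$, since that is exactly what allows replacing $e(t)$ by $e(3C^2)$ in item $2.$
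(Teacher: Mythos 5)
Your proof is correct and follows essentially the same route as the paper: elementary calculus on $[0,\infty)$, locating the critical point of $\frac1C2^{t/C}-2t$, bounding the minimum below for item 1, and using monotonicity past the critical point for item 2. The only cosmetic differences are that you treat $t\le 0$ explicitly and evaluate directly at $3C^2$ (via $8^C\ge 8C^3$) where the paper evaluates at $C\log_2(C^3)$; both yield the same bounds.
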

\begin{proof}
For all time $t$, we have that $e(t)=\frac{1}{C}2^{C^{-1}t}-2\max(0,t)\leq \frac{1}{C}2^{C^{-1}t}-2t=:e_1(t)$. The derivative of $e_1$ is $e_1'(t)=\frac{\log(2)}{C^2}2^{C^{-1}t}-2$, which is non negative $\forall t\geq C \log_2\left(\frac{2}{\log(2)}C^2\right)$ and non positive otherwise. Then $\forall t\in\mathbb{R}$:
\begin{align*}
e_1(t)&\geq e_1\left(\log_2\left(\frac{2}{\log(2)}C^2\right)\right)\geq \frac{2C}{\log(2)}-2C \log_2\left(\frac{2}{\log(2)}C^2\right)\geq \frac{2C}{\log(2)}-4C \log_2\left(\sqrt{\frac{2}{\log(2)}}C\right)
\\&\geq \frac{2C}{\log(2)}-4\sqrt{\frac{2}{\log(2)}}C^2\geq -4\sqrt{\frac{2}{\log(2)}}C^2\geq -7C^2.
\end{align*}
Since $C\geq \frac{2}{\log(2)}$ we have $3C^2\geq C\log_2(C^3)\geq C \log_2\left(\frac{2}{\log(2)}C^2\right)$, then $e_1$ is non decreasing on $[C\log_2(C^3);+\infty[$. We show that $e_1(3C^2)\geq 2853\delta C_N$:
\begin{align*}
e_1(3C^2)\geq e_1(C\log_2(C^3))=&\frac{1}{C}2^{\frac{C\log_2(C^3)}{C}}-2C\log_2(C^3)=C(C-6\log_2(C)).
\end{align*}
Since $C\geq 2^{851}$ we have $C-6\log_2(C)\geq 1$ and since $C\geq 2853\delta C_N$ we have that $e_1(3C^2)\geq C\times 1\geq 2853\delta C_N$ which provides $\forall t\in [3C^2;+\infty[$ we have $e_1(t)\geq 2853\delta C_N$. Furthermore $\forall t\in \mathbb{R}^+$, $e_1(t)=e(t)$, hence $\forall t\in [3C^2;+\infty[$ we have $e(t)\geq 2853\delta C_N$ which implies point $2.$ of this lemma.
\end{proof}

The following lemma provides us with a lower bound matching Lemma \ref{PathConnectingPointsInHoroProduct}, and a first control on the heights a geodesic segment must reach.

\begin{lemma}\label{LowerBoundLengthGeod}
Let $p=(p_X,p_Y)$ and $q=(q_X,q_Y)$ be two points of $X\bowtie Y$ such that $h(p)\leq h(q)$. Let $\alpha=(\alpha_X,\alpha_Y)$ be a geodesic segment of $ X\bowtie Y $ linking $p$ to $q$. Let $C_0=(2853\delta C_N+2^{851})^2$, we have:
\begin{enumerate}
\item $l(\alpha)\geq \Delta h (p,q)+d_r(p_Y,q_Y)+d_r(p_X,q_X)-15C_0$
\item $h^+(\alpha)\geq h(q)+\frac{1}{2}d_r(p_X,q_X)-3C_0$
\item $h^-(\alpha)\leq h(p)-\frac{1}{2}d_r(p_Y,q_Y)+3C_0$.
\end{enumerate}
\end{lemma}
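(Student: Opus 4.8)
The plan is to analyse $\alpha=(\alpha_p,\alpha_q)$ through its two projections and feed each of them into the exponential length estimate of Lemma~\ref{ExpLengthWhenBelowAndReachPoint}. First I would record the dictionary between the height of $\alpha$ and those of its components: since $h(\alpha(t))=h_p(\alpha_p(t))=-h_q(\alpha_q(t))$, one has $h^+(\alpha)=h^+_{H_p}(\alpha_p)=-h^-_{H_q}(\alpha_q)$ and $h^-(\alpha)=h^-_{H_p}(\alpha_p)=-h^+_{H_q}(\alpha_q)$. Pick, by continuity on the compact parameter interval, a point $m_p$ of $\alpha_p$ with $h_p(m_p)=h^-(\alpha)$ and a point $m_q$ of $\alpha_q$ with $h_q(m_q)=-h^+(\alpha)$, and set $a:=h(x)-h^-(\alpha)\ge 0$, $b:=h^+(\alpha)-h(y)\ge 0$, so that $a=\Delta h(x_p,m_p)$ and $b=\Delta h(y_q,m_q)$.

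Next I would apply Lemma~\ref{ExpLengthWhenBelowAndReachPoint} once in each component. In $H_p$ apply it to the triple $x_p,y_p,m_p$, which is legitimate because $h_p(m_p)=h^-(\alpha)\le h(x)\le h(y)$ and $h^-_{H_p}(\alpha_p)=h_p(m_p)$; writing $\Delta H_p:=h(y)+\tfrac12 d_r(x_p,y_p)-h^+(\alpha)=\tfrac12 d_r(x_p,y_p)-b$, this gives $l(\alpha_p)\ge 2a+d_{H_p}(x_p,y_p)+2^{-850}2^{\Delta H_p/\delta}-1-\max(0,2\Delta H_p)-1700\delta$. In $H_q$ the two endpoints must be swapped, since $h_q(y_q)\le h_q(x_q)$; applying the lemma to $y_q,x_q,m_q$ and writing $\Delta H_q:=h_q(x_q)+\tfrac12 d_r(x_q,y_q)-h^+_{H_q}(\alpha_q)=\tfrac12 d_r(x_q,y_q)-a$ yields the analogous bound for $l(\alpha_q)$ with $2b$ and $\Delta H_q$ in place of $2a$ and $\Delta H_p$. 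As $\alpha$ is a geodesic, Property~\ref{ProprSplitLenght} gives $l(\alpha)=l_N(\alpha)\ge\tfrac12\big(l(\alpha_p)+l(\alpha_q)\big)$. Now substitute $2a=d_r(x_q,y_q)-2\Delta H_q$, $2b=d_r(x_p,y_p)-2\Delta H_p$ and $d_{H_p}(x_p,y_p)+d_{H_q}(x_q,y_q)=d_r(x_p,y_p)+d_r(x_q,y_q)+2\Delta h(x,y)$: each relative distance then occurs twice, which cancels the factor $\tfrac12$, and the whole thing collapses to
\[
l(\alpha)\ \ge\ \Delta h(x,y)+d_r(x_p,y_p)+d_r(x_q,y_q)+g(\Delta H_p)+g(\Delta H_q)-1-1700\delta ,
\]
where $g(t):=\tfrac12\,2^{-850}\,2^{t/\delta}-t-\max(0,t)$.

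From this master inequality the three conclusions follow quickly. For $t\ge 0$ one has $\tfrac12 2^{-850}\ge C^{-1}$ and $\delta^{-1}\ge C^{-1}$ (because $C\ge 2^{851}\ge\delta$), so $g(t)\ge e(t)\ge -7C^2$ by Lemma~\ref{AnalyticStudy}(1), with $e$ and $C$ as there; and $g(t)>0$ for $t\le 0$. Hence $g\ge -7C_0$, and since $C_0=C^2\ge 1+1700\delta$ the displayed inequality becomes $l(\alpha)\ge\Delta h(x,y)+d_r(x_p,y_p)+d_r(x_q,y_q)-15C_0$, which is point~1. For points~2 and~3 bring in the matching upper bound: since $\alpha$ is a geodesic, $l(\alpha)=d_{\mathcal{H},N}(x,y)\le l_N(\gamma)\le\Delta h(x,y)+d_r(x_p,y_p)+d_r(x_q,y_q)+1152\delta C_N$, where $\gamma$ is the path produced by Lemma~\ref{PathConnectingPointsInHoroProduct}. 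Together with the master lower bound this forces $g(\Delta H_p)+g(\Delta H_q)\le 1152\delta C_N+1+1700\delta$, hence $g(\Delta H_p)\le 1152\delta C_N+1+1700\delta+7C_0$ using $g(\Delta H_q)\ge -7C_0$, and symmetrically for $g(\Delta H_q)$. Since $g$ is increasing well before $3C_0$ and $g(3C_0)$ dwarfs any polynomial in $C$ — this is the content of Lemma~\ref{AnalyticStudy}(2) — one concludes $\Delta H_p\le 3C_0$ and $\Delta H_q\le 3C_0$. Unwinding the definitions, $h^+(\alpha)=h(y)+\tfrac12 d_r(x_p,y_p)-\Delta H_p\ge h(y)+\tfrac12 d_r(x_p,y_p)-3C_0$ and $h^-(\alpha)=h(x)-\tfrac12 d_r(x_q,y_q)+\Delta H_q\le h(x)-\tfrac12 d_r(x_q,y_q)+3C_0$, which are points~2 and~3.

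I expect the only real difficulty to be the constant bookkeeping: setting up the $H_q$-side application of Lemma~\ref{ExpLengthWhenBelowAndReachPoint} with the endpoints in the correct order, checking that the two copies of each $d_r$-term produced by the $2a,2b$ summands exactly cancel the doubling from $l(\alpha)\ge\tfrac12(l(\alpha_p)+l(\alpha_q))$ against the $2S'$ coming from Lemma~\ref{PathConnectingPointsInHoroProduct}, and verifying that the final comparisons hold with $C_0=(2853\delta C_N+2^{851})^2$. None of this is conceptually hard, but it is precisely the point at which Lemma~\ref{AnalyticStudy} is needed, and a careless estimate would cost a factor.
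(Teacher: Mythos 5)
Your proposal follows the paper's own route almost verbatim: one application of Lemma \ref{ExpLengthWhenBelowAndReachPoint} in each component (with the endpoints swapped in $H_q$), averaging via Property \ref{ProprSplitLenght}, comparison with the upper bound of Lemma \ref{PathConnectingPointsInHoroProduct}, and an appeal to Lemma \ref{AnalyticStudy}; your $\Delta H_p,\Delta H_q$ are exactly the paper's $\Delta H^+,\Delta H^-$, your master inequality is the paper's intermediate estimate, and point 1 together with the derivation of points 2--3 from the two-sided comparison is set up correctly.

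The one soft spot is the final invocation of Lemma \ref{AnalyticStudy}(2). After isolating one term you only know $g(\Delta H_p)\leq 1152\delta C_N+1+1700\delta+7C_0$, and this threshold is not covered by the lemma as stated, whose hypothesis is $e(t)\leq 2853\delta C_N$; the extra $7C_0=7C^2$ dwarfs that. The conclusion $\Delta H_p\leq 3C_0$ is still true, but you have to check it directly: for $t\geq 3C^2$ one has $e(t)\geq e(3C^2)=\frac{1}{C}2^{3C}-6C^2$ (using the monotonicity established in the proof of Lemma \ref{AnalyticStudy}), which exceeds $7C^2+2853\delta C_N$ by an enormous margin --- note that the displayed bound $e_1(3C^2)\geq C\bigl(C-6\log_2 C\bigr)$ inside that proof is itself below $7C^2$, so citing the lemma or its written proof is not quite enough. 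For what it is worth, the paper negotiates the same corner differently: it sets $S=\max\{\Delta H^+,\Delta H^-\}$ and drops the other summand, which tacitly treats that summand as nonnegative even though $t\mapsto 2^{-851}2^{t/\delta}-2\max(0,t)$ can be negative for moderate positive $t$. So both arguments ultimately rest on the same one-line strengthening of Lemma \ref{AnalyticStudy}(2); once you add that check explicitly, your proof is complete.
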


\begin{proof}
Let us denote $\Delta H^+=h(q)+\frac{1}{2}d_r(p_X,q_X)-h^+(\alpha)$ and $\Delta H^-=h^-(\alpha)-\left(h(p)-\frac{1}{2}d_r(p_Y,q_Y)\right)$. Let $m$ be a point of $\alpha$ at height $h^-(\alpha)=h(p)-\frac{1}{2}d_r(p_Y,q_Y)+\Delta H^-$, and $n$ be a point of $\alpha$ at height $h^+(\alpha)=h(q)+\frac{1}{2}d_r(p_X,q_X)-\Delta H^+$. Then Proposition \ref{ExpLengthWhenBelowAndReachPoint} used on $\alpha_X$ gives us:
\begin{align*}
l(\alpha_X)\geq& 2\Delta h(p_X,m_X)+d(p_X,q_X)+2^{-850}2^{\frac{1}{\delta}\Delta H^+}-1-2\max(0,\Delta H^+)-1700\delta
\\\geq& 2h(p_X)-2\left(h(p_X)-\frac{1}{2}d_r(p_Y,q_Y)+\Delta H^-\right)+d(p_X,q_X)+2^{-850}2^{\frac{1}{\delta}\Delta H^+}-1
\\&-2\max(0,\Delta H^+)-1700\delta
\\\geq& d_r(p_Y,q_Y)+d_r(p_X,q_X)+\Delta h(p,q)+2^{-850}2^{\frac{1}{\delta}\Delta H^+}-1-2\max(0,\Delta H^+)-2\Delta H^--1700\delta.
\end{align*}
Since $h(p_Y)\geq h(q_Y)$ and $h(n_Y)=h(q_Y)-\frac{1}{2}d_r(p_X,q_X)+\Delta H^+$, Proposition \ref{ExpLengthWhenBelowAndReachPoint} used on $\alpha_Y$ provides similarly:
\begin{align*}
l(\alpha_Y)\geq d_r(p_X,q_X)+d_r(p_Y,q_Y)+\Delta h(p,q)+2^{-850}2^{\frac{1}{\delta}\Delta H^-}-1-2\max(0,\Delta H^-)-2\Delta H^+-1700\delta.
\end{align*}
Hence by Property \ref{ProprSplitLenght}:
\begin{align}
l_N(\alpha)\geq\frac{1}{2}(l(\alpha_X)+l(\alpha_Y))
\geq& d_r(p_X,q_X)+d_r(p_Y,q_Y)+\Delta h(p,q)-1700\delta+2^{-851}2^{\frac{1}{\delta}\Delta H^-}\nonumber\\&+2^{-851}2^{\frac{1}{\delta}\Delta H^+}-2\max(0,\Delta H^-)-2\max(0,\Delta H^+)-1.\label{UseEq1InThis}
\end{align}
Furthermore, we know by Lemma \ref{PathConnectingPointsInHoroProduct} that $l_N(\alpha)\leq \Delta h (p,q)+d_r(p_X,q_X)+d_r(p_Y,q_Y) +1152\delta C_N$. Since $C_N\geq1$ we have:
\begin{align*}
2852\delta C_N\geq& 2^{-851}2^{\frac{1}{\delta}\Delta H^-}-2\max(0,\Delta H^-)+2^{-851}2^{\frac{1}{\delta}\Delta H^+}-2\max(0,\Delta H^+)-1.
\end{align*}
Let us denote $S:=\max\{\Delta H^-,\Delta H^+\}$. Therefore we have $2^{-851}2^{\frac{1}{\delta}S}-2\max(0,S)-1\leq 2852\delta C_N$. By assumption $\delta\geq1$ hence $2^{-851}2^{\frac{1}{\delta}S}-2\max(0,S)\leq 2853\delta C_N$. Furthermore, for $C=2853\delta C_N+2^{851}$, we have both $2^{-851}\geq\frac{1}{C}$ and $\frac{1}{\delta}\geq \frac{1}{C}$. Then we have $\frac{1}{C}2^{\frac{S}{C}}-2\max(0,S)\leq 2853\delta C_N$. Lemma \ref{AnalyticStudy} provides $S\leq 3C^2=3C_0$ which implies points $2.$ and $3.$ of our lemma. Lemma \ref{AnalyticStudy} also provides us with:
\begin{align*}
-14C_0\leq &2^{-851}2^{\frac{1}{\delta}\Delta H^-}-2\max(0,\Delta H^-)+2^{-851}2^{\frac{1}{\delta}\Delta H^+}-2\max(0,\Delta H^+).
\end{align*}
Last inequality is a lower bound of the term we want to remove in inequality (\ref{UseEq1InThis}). The first point of our lemma hence follows since $1700\delta+1\leq C_0$.
\end{proof}

We recall that by definition:
\begin{align*}
&\forall p_X,q_X\in X,\ d_r(p_X,q_X)=d_{X}(p_X,q_X)-\Delta h(p_X,q_X)
\\&\forall p_Y,q_Y\in Y,\ d_r(p_Y,q_Y)=d_{Y}(p_Y,q_Y)-\Delta h(p_Y,q_Y)
\end{align*}
Hence combining Lemma \ref{PathConnectingPointsInHoroProduct} and \ref{LowerBoundLengthGeod} we get the following corollary.

\begin{cor}\label{lengthGeod}
Let $N$ be an admissible norm and let $C_0= (2853\delta C_N+2^{851})^2$. The length of a geodesic segment $\alpha$ connecting $p$ to $q$ in $( X\bowtie Y ,d_{\bowtie})$ is controlled as follows:
\begin{align*}
\big|l_N(\alpha)-\big(d_{X}(p_X,q_X)+d_{Y}(p_Y,q_Y)-\Delta h (p,q)\big)\big|\leq 15C_0,
\end{align*}
which gives us a control on the $N$-path metric, for all points $p$ and $q$ in $ X\bowtie Y $ we have:
\begin{align*}
\big|d_{\bowtie}(p,q)-\big(d_{X}(p_X,q_X)+d_{Y}(p_Y,q_Y)-\Delta h (p,q)\big)\big|\leq 15C_0.
\end{align*}
\end{cor}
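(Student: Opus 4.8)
The plan is to assemble the matching upper and lower bounds that have already been proved, with only two small points requiring care: a symmetry reduction, and the verification that the additive error in Lemma \ref{PathConnectingPointsInHoroProduct} is dominated by $15C_0$.

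First I would reduce to the case $h(x)\le h(y)$. The three quantities $\Delta h(x,y)$, $d_r(x_p,y_p)$ and $d_r(x_q,y_q)$, as well as $d_{\mathcal{H},N}(x,y)$ and $l_N(\alpha)$, are all symmetric under exchanging $x$ and $y$, so there is no loss of generality in assuming $h(x)\le h(y)$; this is exactly the hypothesis under which Lemma \ref{LowerBoundLengthGeod} is stated. For the upper bound, Lemma \ref{PathConnectingPointsInHoroProduct} produces a path $\gamma$ joining $x$ to $y$ with $l_N(\gamma)\le \Delta h(x,y)+d_r(x_p,y_p)+d_r(x_q,y_q)+1152\delta C_N$. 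Since $\alpha$ is a geodesic segment of $(\mathcal{H},d_{\mathcal{H},N})$, one has $l_N(\alpha)=d_{\mathcal{H},N}(x,y)\le l_N(\gamma)$, and it remains only to observe that $1152\delta C_N\le 15C_0$: indeed $C_0=(2853\delta C_N+2^{851})^2\ge (2853\delta C_N)^2\ge 2853\delta C_N$ because $\delta\ge 1$ and $C_N\ge 1$, so in fact $15C_0\ge 15\cdot 2853\,\delta C_N\ge 1152\,\delta C_N$.

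For the lower bound I would simply invoke point $1.$ of Lemma \ref{LowerBoundLengthGeod}, whose hypotheses are now met, giving $l_N(\alpha)\ge \Delta h(x,y)+d_r(x_q,y_q)+d_r(x_p,y_p)-15C_0$. Combining the two inequalities yields the first displayed estimate, $\big|l_N(\alpha)-(\Delta h(x,y)+d_r(x_p,y_p)+d_r(x_q,y_q))\big|\le 15C_0$. Finally, for the statement about $d_{\mathcal{H},N}$ itself: $\mathcal{H}$ is connected by Property \ref{HoroProdConnected}, and being a complete, locally compact length space it is proper and geodesic, so any two points $x,y$ are joined by a geodesic segment $\alpha$ with $d_{\mathcal{H},N}(x,y)=l_N(\alpha)$; the estimate on $d_{\mathcal{H},N}(x,y)$ is then a restatement of the estimate on $l_N(\alpha)$.

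I do not expect any genuine obstacle here, since all the analytic work was carried out in Lemmas \ref{PathConnectingPointsInHoroProduct} and \ref{LowerBoundLengthGeod} (and, behind them, in Lemma \ref{ExpLengthWhenBelowAndReachPoint} and Lemma \ref{AnalyticStudy}); the only things to watch are the $x\leftrightarrow y$ symmetry used to apply Lemma \ref{LowerBoundLengthGeod}, and the crude numerical comparison absorbing $1152\delta C_N$ into $15C_0$.
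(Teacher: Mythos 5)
Your proposal is correct and follows essentially the same route as the paper, which obtains the corollary by combining the upper bound of Lemma \ref{PathConnectingPointsInHoroProduct} with the lower bound of Lemma \ref{LowerBoundLengthGeod} and absorbing the constant $1152\delta C_N$ into $15C_0$. Your additional remarks on the $x\leftrightarrow y$ symmetry and on the existence of geodesic segments realizing $d_{\mathcal{H},N}(x,y)$ are points the paper leaves implicit, but they do not change the argument.
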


This result is central as it shows that the shape of geodesics does not depend on the $N$-path metric chosen for the distance on the horospherical product.

\begin{cor}\label{l1samel2}
Let $r\geq 1$. For all $p$ and $q$ in $X\bowtie Y$ we have:
\begin{equation}
\big|d_{ \bowtie  ,\ell_r}(p,q)-d_{ \bowtie  ,\ell_1}(p,q)\big|\leq 30(5706\delta+2^{851})^2.\nonumber
\end{equation}
\end{cor}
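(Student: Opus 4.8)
The plan is to deduce this directly from Corollary~\ref{lengthGeod}, once one checks that the normalized $\ell_r$ norms are admissible with a constant $C_N$ bounded independently of $r$. Here $d_{\mathcal{H},\ell_r}$ denotes the $N_r$-path metric of Definition~\ref{DefNDistHoro}, where $N_r$ is the normalization of the standard $\ell_r$ norm so that $N_r(1,1)=1$; explicitly $N_r(a,b)=\left(\frac{|a|^r+|b|^r}{2}\right)^{1/r}$ for $1\le r<\infty$, and $N_\infty(a,b)=\max(|a|,|b|)$. This is the normalization for which vertical geodesics are isometrically parametrised by height, so it is the natural one to compare different $\ell_r$ metrics up to an additive constant (cf. Proposition~\ref{PropGeodVertInHoro}); in particular $N_1(a,b)=\frac{|a|+|b|}{2}$.

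First I would verify that each $N_r$ is admissible in the sense of Definition~\ref{DefDistHoro}. The equality $N_r(1,1)=1$ holds by construction. The lower bound $N_r(a,b)\ge\frac{a+b}{2}$ for $a,b\ge 0$ is precisely the fact that the $r$-th power mean dominates the arithmetic mean when $r\ge 1$ (with the case $r=\infty$ reading $\max(a,b)\ge\frac{a+b}{2}$), which itself follows from convexity of $t\mapsto t^r$ on $[0,\infty)$. For the upper constant I would use $|a|^r+|b|^r\le(|a|+|b|)^r$ for $a,b\in\mathbb{R}$ and $r\ge 1$ (immediate from $(1+t)^r\ge 1+t^r$ for $t\ge 0$), which gives $N_r(a,b)\le\frac{|a|+|b|}{2^{1/r}}=2^{1-1/r}\cdot\frac{|a|+|b|}{2}\le 2\cdot\frac{|a|+|b|}{2}$. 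Hence one may take $C_{N_r}\le 2$ for every $r\in[1,\infty]$, and of course $C_{N_1}=1$.

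Now I would apply Corollary~\ref{lengthGeod} twice, once with $N=N_r$ and once with $N=N_1$. Writing $C_0(N)=(2853\delta C_N+2^{851})^2$ for the constant of that corollary, the bound $C_{N_r}\le 2$ yields $C_0(N_r)\le(5706\delta+2^{851})^2$, while $C_{N_1}=1$ yields $C_0(N_1)=(2853\delta+2^{851})^2\le(5706\delta+2^{851})^2$. Thus both $d_{\mathcal{H},\ell_r}(x,y)$ and $d_{\mathcal{H},\ell_1}(x,y)$ lie within $15(5706\delta+2^{851})^2$ of the same quantity $\Delta h(x,y)+d_r(x_p,y_p)+d_r(x_q,y_q)$, and the triangle inequality gives
\begin{equation}
\big|d_{\mathcal{H},\ell_r}(x,y)-d_{\mathcal{H},\ell_1}(x,y)\big|\le 30(5706\delta+2^{851})^2.\nonumber
\end{equation}
There is no real obstacle here: the only point requiring care is the elementary estimate that the normalized $\ell_r$ norms are admissible with $C_N\le 2$, which is a one-line application of the power mean inequality together with $|a|^r+|b|^r\le(|a|+|b|)^r$; everything else is bookkeeping with the constants of Corollary~\ref{lengthGeod}.
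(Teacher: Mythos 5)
Your proposal is correct and follows essentially the same route as the paper: you verify that the normalized $\ell_r$ norms are admissible with $C_{N}\leq 2$ (via the power mean inequality and $|a|^r+|b|^r\leq(|a|+|b|)^r$), then apply Corollary~\ref{lengthGeod} to both $d_{\mathcal{H},\ell_r}$ and $d_{\mathcal{H},\ell_1}$ and conclude by the triangle inequality. The only difference is presentational: you make explicit the normalization $N_r(1,1)=1$ and the final double application of Corollary~\ref{lengthGeod}, which the paper leaves implicit in ``which ends the proof.''
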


\begin{proof}
The $\ell_r$ norm inequalities provide us with:
\begin{equation}
\sqrt[r]{{d_{X}}^r+{d_{Y}}^r}\leq d_{X}+d_{Y}\leq  2^{\frac{r-1}{r}}\sqrt[r]{{d_{X}}^r+{d_{Y}}^r}.\nonumber
\end{equation} 
Hence we have $\frac{\sqrt[r]{2}}{2}\left(d_{X}+d_{Y}\right)\leq  \sqrt[r]{{d_{X}}^r+{d_{Y}}^r}\leq d_{X}+d_{Y}$. Then the $\ell_r$ norms are admissible norms with $C_{\ell_r}\leq 2$, which ends the proof.
\end{proof}

The next corollary tells us that changing this distance does not change the large scale geometry of $ X\bowtie Y $. 

\begin{cor}
Let $N_1$ and $N_2$ be two admissible norms. Then the metric spaces $\left( X\bowtie Y , d_{ \bowtie  ,N_1}\right)$ and $\left( X\bowtie Y , d_{ \bowtie  ,N_2}\right)$ are roughly isometric.
\end{cor}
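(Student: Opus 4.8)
The statement is an immediate consequence of Corollary \ref{lengthGeod}, and the plan is to exhibit the identity map on $\mathcal{H}$ as a $(1,K)$-quasi-isometry. First I would record that the underlying set $\mathcal{H}=\lbrace(x_p,x_q)\in H_p\times H_q\ /\ h_p(x_p)+h_q(x_q)=0\rbrace$ does not depend on the chosen norm, so that $\mathrm{id}\colon(\mathcal{H},d_{\mathcal{H},N_1})\to(\mathcal{H},d_{\mathcal{H},N_2})$ is a well-defined bijection; in particular it is surjective, so only the metric distortion needs to be controlled.

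Next I would introduce the auxiliary function $D(x,y):=\Delta h(x,y)+d_r(x_p,y_p)+d_r(x_q,y_q)$, which depends only on $H_p$ and $H_q$ and not on any norm. Applying Corollary \ref{lengthGeod} twice, with the norm $N_1$ and then with the norm $N_2$, yields constants $C_0^{(1)}=(2853\delta C_{N_1}+2^{851})^2$ and $C_0^{(2)}=(2853\delta C_{N_2}+2^{851})^2$ such that, for all $x,y\in\mathcal{H}$,
\begin{align*}
\big|d_{\mathcal{H},N_1}(x,y)-D(x,y)\big|\leq 15C_0^{(1)},\qquad \big|d_{\mathcal{H},N_2}(x,y)-D(x,y)\big|\leq 15C_0^{(2)}.
\end{align*}
The triangle inequality then gives $\big|d_{\mathcal{H},N_1}(x,y)-d_{\mathcal{H},N_2}(x,y)\big|\leq K$ with $K:=15\big(C_0^{(1)}+C_0^{(2)}\big)$, uniformly in $x$ and $y$.

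Finally I would conclude: the bound just obtained says exactly that for all $x,y\in\mathcal{H}$,
\begin{align*}
d_{\mathcal{H},N_2}(x,y)-K\ \leq\ d_{\mathcal{H},N_1}(x,y)\ \leq\ d_{\mathcal{H},N_2}(x,y)+K,
\end{align*}
so $\mathrm{id}$ is a $(1,K)$-quasi-isometric embedding, and since it is onto (with $0$-dense image) it is a quasi-isometry. There is no real obstacle here: the only point deserving a word is that $\mathcal{H}$ is the same set in both cases and that $D(x,y)$ is finite and norm-independent, both of which are already built into Definition \ref{DefHoro} and Corollary \ref{lengthGeod}; the quantitative work has all been absorbed into Corollary \ref{lengthGeod}.
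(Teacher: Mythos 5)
Your proposal is correct and follows exactly the argument the paper intends: the corollary is stated as an immediate consequence of Corollary \ref{lengthGeod}, and applying that corollary for each norm to compare both distances to the norm-independent quantity $\Delta h(x,y)+d_r(x_p,y_p)+d_r(x_q,y_q)$, then concluding that the identity is a $(1,K)$-quasi-isometry, is precisely the intended (implicit) proof.
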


The control on the distances of Lemma \ref{lengthGeod} will help us understand the shape of geodesic segments and geodesic lines in a horospherical product.

\section{Shapes of geodesics and visual boundary of $ X\bowtie Y $}\label{SectionThm}
\subsection{Shapes of geodesic segments}

In this section we focus on the shape of geodesics. We recall that in all the following $X$ and $Y$ are assumed to be two proper, geodesically complete, $\delta$-hyperbolic, Busemann spaces with $\delta\geq 1$, and $N$ is assumed to be an admissible norm. 

The next lemma gives a control on the maximal and minimal height of a geodesic segment in a horospherical product. It is similar to the traveling salesman problem, who needs to walk from $x$ to $y$ passing by $m$ and $n$. This result follows from the inequalities on maximal and minimal heights of Lemma \ref{LowerBoundLengthGeod} combined with Lemma \ref{PathConnectingPointsInHoroProduct}.

\begin{lemma}\label{HPlusHMinusGeod}
Let $p=(p_X,p_Y)$ and $q=(q_X,q_Y)$ be two points of $X\bowtie Y$ such that $h(p)\leq h(q)$. Let $N$ be an admissible norm and let $\alpha=(\alpha_X,\alpha_Y)$ be a geodesic of $( X\bowtie Y ,d_{\bowtie})$ linking $p$ to $q$. Let $C_0=(2853\delta C_N+2^{851})^2$, we have:
\begin{enumerate}
\item $\left|h^-(\alpha) -\left(h(p)-\frac{1}{2}d_r(p_Y,q_Y)\right)\right|\leq 4C_0$
\item $\left|h^+(\alpha) -\left(h(q)+\frac{1}{2}d_r(p_X,q_X)\right)\right|\leq 4C_0$.
\end{enumerate}
\end{lemma}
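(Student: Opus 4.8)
The plan is to squeeze $h^{+}(\alpha)$ and $h^{-}(\alpha)$ between two bounds, half of which is already in hand. Lemma \ref{LowerBoundLengthGeod} supplies $h^{+}(\alpha)\geq h(y)+\frac12 d_r(x_p,y_p)-3C_0$ and $h^{-}(\alpha)\leq h(x)-\frac12 d_r(x_q,y_q)+3C_0$, and $3C_0\leq 4C_0$; so it remains to prove the two opposite inequalities, i.e.\ that $\alpha$ does not climb much above $h(y)+\frac12 d_r(x_p,y_p)$ nor descend much below $h(x)-\frac12 d_r(x_q,y_q)$.

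First I would produce a lower bound on $l_N(\alpha)$ involving only its extreme heights. Pick a point $m$ of $\alpha$ with $h(m)=h^{-}(\alpha)$ and a point $n$ with $h(n)=h^{+}(\alpha)$; note $h^{-}(\alpha)\leq h(x)\leq h(y)\leq h^{+}(\alpha)$. Cut $\alpha$ into three geodesic sub-segments at the points $x,m,n,y$, listed in the order in which they occur along $\alpha$. Each sub-segment, being a subpath of a geodesic, has $N$-length equal to the $d_{\mathcal{H},N}$-distance of its endpoints, which by Lemma \ref{LemDistBigHaut} is at least the difference of heights of those endpoints. Summing the three pieces, in either of the two possible orderings of $m$ and $n$, yields $l_N(\alpha)\geq 2h^{+}(\alpha)-2h^{-}(\alpha)-\Delta h(x,y)$ (the two orderings give $2h^{+}-2h^{-}+h(x)-h(y)$ and $2h^{+}-2h^{-}+h(y)-h(x)$ respectively, and both are at least the claimed quantity since $h(x)\leq h(y)$).

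Then I would combine this with the upper bound $l_N(\alpha)=d_{\mathcal{H},N}(x,y)\leq \Delta h(x,y)+d_r(x_p,y_p)+d_r(x_q,y_q)+1152\delta C_N$ coming from Lemma \ref{PathConnectingPointsInHoroProduct}, which gives
\[ h^{+}(\alpha)-h^{-}(\alpha)\leq \Delta h(x,y)+\tfrac12\bigl(d_r(x_p,y_p)+d_r(x_q,y_q)\bigr)+576\delta C_N. \]
Into this inequality I would substitute $h^{-}(\alpha)\leq h(x)-\frac12 d_r(x_q,y_q)+3C_0$ (Lemma \ref{LowerBoundLengthGeod}), use $h(x)+\Delta h(x,y)=h(y)$, and absorb $3C_0+576\delta C_N\leq 4C_0$ — legitimate because $C_0=(2853\delta C_N+2^{851})^2\geq (2853\delta C_N)^2\geq 2853\delta C_N\geq 576\delta C_N$ — to get $h^{+}(\alpha)\leq h(y)+\frac12 d_r(x_p,y_p)+4C_0$; together with Lemma \ref{LowerBoundLengthGeod} this is statement $2$. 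Substituting instead $h^{+}(\alpha)\geq h(y)+\frac12 d_r(x_p,y_p)-3C_0$ into the same displayed inequality gives, symmetrically, $h^{-}(\alpha)\geq h(x)-\frac12 d_r(x_q,y_q)-4C_0$, which with Lemma \ref{LowerBoundLengthGeod} is statement $1$.

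The step that needs the most care is the length estimate $l_N(\alpha)\geq 2h^{+}(\alpha)-2h^{-}(\alpha)-\Delta h(x,y)$: besides the harmless case split on the relative order of the highest and lowest points, one must resist cutting $\alpha$ only at its highest point, which would leak a spurious term $\frac12 d_r(x_q,y_q)$ and leave the estimate too weak to close with the constant $4C_0$. Cutting at the lowest point as well, and feeding in the already-established fact that $\alpha$ genuinely descends to about $h(x)-\frac12 d_r(x_q,y_q)$, is exactly what makes the bookkeeping balance.
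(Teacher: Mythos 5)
Your proof is correct and follows essentially the same route as the paper's: both cut $\alpha$ at a lowest point $m$ and a highest point $n$, bound each sub-segment's length from below by its height difference (Lemma \ref{LemDistBigHaut}), and then play the resulting lower bound on $l_N(\alpha)$ against the upper bound of Lemma \ref{PathConnectingPointsInHoroProduct} together with points $2.$ and $3.$ of Lemma \ref{LowerBoundLengthGeod}. The only difference is cosmetic bookkeeping (you isolate the inequality $h^+(\alpha)-h^-(\alpha)\leq \Delta h(x,y)+\tfrac12(d_r(x_p,y_p)+d_r(x_q,y_q))+576\delta C_N$ before substituting, while the paper substitutes directly), and your constants close with the same margin.
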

\begin{proof}
Let us consider a point $m$ of $\alpha$ such that $h(m)=h^-(\alpha)$ and a point $n$ of $\alpha$ such that $h(n)=h^+(\alpha)$. Then $m$ comes before $n$ or $n$ comes before $m$. In both cases, since $h(m)\leq h(p)\leq h(q)\leq h(n)$ and by Lemma \ref{LemDistBigHaut} we have:
\begin{align*}
l_N(\alpha)&\geq \Delta h(p,q)+2(h(p)-h^-(\alpha))+2(h^+(\alpha)-h(q))
\\&\geq\Delta h(p,q)+2(h(p)-h^-(\alpha))+d_r(p_X,q_X)-6C_0,\text{ by Lemma \ref{LowerBoundLengthGeod}}.
\end{align*}
Furthermore Lemma \ref{PathConnectingPointsInHoroProduct} provides $l_N(\alpha)\leq \Delta h (p,q)+d_r(p_X,q_X)+d_r(p_Y,q_Y)+C_0$ , hence:
\begin{align*}
\Delta h (p,q)+d_r(p_X,q_X)+d_r(p_Y,q_Y)+C_0\geq\Delta h(p,q)+2(h(p)-h^-(\alpha))+d_r(p_X,q_X)-6C_0,
\end{align*}
which implies $\left(h(p)-\frac{1}{2}d_r(p_Y,q_Y)\right) -h^-(\alpha)\leq 4C_0$. In combination with the third point of Lemma \ref{LowerBoundLengthGeod} it proves the first point of our Lemma \ref{HPlusHMinusGeod}. The second point is proved similarly.
\end{proof}

\begin{lemma}\label{TechShapeGeod}
Let $N$ be an admissible norm and let $C_0=(2853\delta C_N+2^{851})^2$. Let $p=(p_X,p_Y)$ and $q=(q_X,q_Y)$ be two points of $X\bowtie Y$. Let $\alpha=(\alpha_X,\alpha_Y)$ be a geodesic of $( X\bowtie Y ,d_{\bowtie})$ linking $p$ to $q$. Then there exist two points $a=(a_X,a_Y),\ b=(b_X,b_Y)$ of $\alpha$ such that $h(a)=h(p)$, $h(b)=h(q)$ with the following properties:
\begin{enumerate}
\item If $h(p)\leq h(q)-7C_0$ then:
\begin{enumerate}
\item $h^-(\alpha)=h^-([x,a])$ and $h^+(\alpha)=h^+([b,y])$
\item $\left|d_r(p_Y,a_Y)-d_r(p_Y,q_Y)\right| \leq 16C_0\ \text{and}\ d_r(p_X,a_X)\leq 22C_0$
\item $\left|d_r(q_X,b_X)-d_r(p_X,q_X)\right| \leq 16C_0\ \text{and}\ d_r(q_Y,b_Y)\leq 22C_0$
\item $|d_{\bowtie}(a,b)-\Delta h(a,b)|\leq 13C_0$.
\end{enumerate}
\item If $h(q)\leq h(p)-7C_0$ then $(a)$, $(b)$, $(c)$ and $(d)$ hold by switching the roles of $p$ and $q$ and switching the roles of $a$ and $b$.
\item If $|h(p)-h(q)|\leq 7C_0$ at least one of the two previous conclusions is satisfied.
\end{enumerate}
\end{lemma}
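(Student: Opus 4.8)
The plan is to locate $a$ as the last point of $\alpha$ at height $h(x)$ and $b$ as the first point of $\alpha$ at height $h(y)$, to check (in the two non-degenerate items) that they occur in this order along $\alpha$, and then to cut $\alpha$ into the three geodesic pieces $[x,a]$, $[a,b]$, $[b,y]$ whose lengths are controlled by Corollary \ref{lengthGeod}. Exchanging the two factors, we may assume $h(x)\le h(y)$, which covers items $1$ and $3$, item $2$ being symmetric. First I would record the preliminary estimates obtained by applying Lemma \ref{LowerBoundLengthGeod} and Lemma \ref{HPlusHMinusGeod} to $\alpha$: $h^+(\alpha)=h(y)+\tfrac12 d_r(x_p,y_p)+O(C_0)$, $h^-(\alpha)=h(x)-\tfrac12 d_r(x_q,y_q)+O(C_0)$, and $l_N(\alpha)=\Delta h(x,y)+d_r(x_p,y_p)+d_r(x_q,y_q)+O(C_0)$, the crucial point being that the upper bound on $l_N(\alpha)$ (from Lemma \ref{PathConnectingPointsInHoroProduct}) and the bounds on $h^\pm(\alpha)$ (from Lemma \ref{LowerBoundLengthGeod}) carry only a tiny additive error. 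Parametrising $\alpha:[0,L]\to\mathcal H$ by arclength with $\alpha(0)=x$, $\alpha(L)=y$, I set $t_a=\sup\{t:h(\alpha(t))\le h(x)\}$, $t_b=\inf\{t:h(\alpha(t))\ge h(y)\}$, $a=\alpha(t_a)$, $b=\alpha(t_b)$. Continuity of the height gives $h(a)=h(x)$ with $h(\alpha(t))>h(x)$ for $t>t_a$, hence $h^-(\alpha)=h^-(\alpha|_{[0,t_a]})$, and symmetrically $h(b)=h(y)$ with $h^+(\alpha)=h^+(\alpha|_{[t_b,L]})$: this is item $(a)$.

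The key step is to prove $t_a\le t_b$ when $h(x)\le h(y)-7C_0$. If instead $t_b<t_a$, then along $\alpha$ the height runs from $h(x)$ up to $h(y)$, down to $h(x)$, up to $h(y)$, and in addition (by the choice of $t_a,t_b$) visits a point of height $h^-(\alpha)$ at some time $\le t_a$ and a point of height $h^+(\alpha)$ at some time $\ge t_b$; in every ordering of these events the total variation of $t\mapsto h(\alpha(t))$ is at least $2\bigl(h^+(\alpha)-h^-(\alpha)\bigr)+\Delta h(x,y)$. Since the length of a path dominates the total variation of its height ($d_N\ge\Delta h$, cf.\ Lemma \ref{LemDistBigHaut}), the preliminary estimates give $l_N(\alpha)\ge 3\,\Delta h(x,y)+d_r(x_p,y_p)+d_r(x_q,y_q)-O(C_0)$, which contradicts the upper bound $l_N(\alpha)\le\Delta h(x,y)+d_r(x_p,y_p)+d_r(x_q,y_q)+O(C_0)$ once $\Delta h(x,y)\ge 7C_0$ — this is exactly where the sharpness of the error terms is used. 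Hence $\alpha=[x,a]\cup[a,b]\cup[b,y]$ is a concatenation of three geodesic segments of $\mathcal H$, and by construction $h^-([a,b])=h(a)$ and $h^+([a,b])=h(b)$, i.e.\ $[a,b]$ is monotone in range.

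Then I would derive items $(b)$–$(d)$ by feeding the length and height estimates into the three pieces. Applying Lemma \ref{HPlusHMinusGeod} to $[x,a]$, whose endpoints have height $h(x)$ and which realises $h^-(\alpha)$, gives $h^-(\alpha)=h(x)-\tfrac12 d_r(x_q,a_q)+O(C_0)$; comparison with $h^-(\alpha)=h(x)-\tfrac12 d_r(x_q,y_q)+O(C_0)$ yields $|d_r(x_q,a_q)-d_r(x_q,y_q)|\le 16C_0$. For the second half of $(b)$, split $l_N(\alpha)=l_N([x,a])+l_N([a,b])+l_N([b,y])$ and use $l_N([a,b])\ge\Delta h(x,y)$ and $l_N([b,y])\ge 2(h^+(\alpha)-h(y))\ge d_r(x_p,y_p)-O(C_0)$ to deduce $l_N([x,a])\le d_r(x_q,y_q)+O(C_0)$; combined with the total-variation lower bound $l_N([x,a])\ge 2(h^+([x,a])-h(x))+2(h(x)-h^-(\alpha))$ this bounds $h^+([x,a])-h(x)$ by $O(C_0)$, and a further application of Lemma \ref{HPlusHMinusGeod} to $[x,a]$ gives $d_r(x_p,a_p)\le 22C_0$. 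The symmetric argument on $[b,y]$ gives $(c)$. For $(d)$, Lemma \ref{LowerBoundLengthGeod} applied to $[a,b]$ (whose extremal heights coincide with those of its endpoints) forces $d_r(a_p,b_p),d_r(a_q,b_q)\le 6C_0$ via its points $2$ and $3$, so Lemma \ref{PathConnectingPointsInHoroProduct} gives $d_{\mathcal H,N}(a,b)=l_N([a,b])\le\Delta h(a,b)+d_r(a_p,b_p)+d_r(a_q,b_q)+O(C_0)\le\Delta h(a,b)+13C_0$, while $d_{\mathcal H,N}(a,b)\ge\Delta h(a,b)$ by Lemma \ref{LemDistBigHaut}.

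Item $2$ is obtained by exchanging $(x,a)$ with $(y,b)$. For item $3$, where $|h(x)-h(y)|\le 7C_0$, the ordering argument above no longer applies; instead one distinguishes whether a lowest point of $\alpha$ precedes a highest point or not, and chooses $a$, $b$ as the appropriate innermost crossings of the levels $h(x)$ and $h(y)$ so that the three-segment decomposition takes the ``$h(x)\le h(y)$''-shape in the first subcase and the ``$h(y)\le h(x)$''-shape in the second; the computations of $(b)$–$(d)$ then go through and yield one of the two conclusions. The main obstacle is the ordering step, and more generally the bookkeeping of the additive constants that makes the threshold $7C_0$ and the bounds $16C_0$, $22C_0$, $13C_0$ come out: this forces one to use the sharp error terms of Lemma \ref{PathConnectingPointsInHoroProduct} and Lemma \ref{LowerBoundLengthGeod} rather than the lossy $15C_0$ of Corollary \ref{lengthGeod} at the critical places. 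Everything else is a routine application of Corollary \ref{lengthGeod}, Lemma \ref{HPlusHMinusGeod} and Lemma \ref{PathConnectingPointsInHoroProduct} to the pieces $[x,a]$, $[a,b]$, $[b,y]$.
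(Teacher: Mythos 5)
Your proposal follows essentially the same route as the paper: cut $\alpha$ at crossings of the levels $h(x)$ and $h(y)$ into $[x,a]\cup[a,b]\cup[b,y]$, establish the order of the crossings by contradiction, playing the sharp upper bound of Lemma \ref{PathConnectingPointsInHoroProduct} against the height bounds of Lemma \ref{LowerBoundLengthGeod} (this is exactly the paper's argument that the lowest point precedes the highest one when $h(x)\leq h(y)-7C_0$), and then feed Lemma \ref{HPlusHMinusGeod} and the length estimates into the three pieces. Your derivations of $(a)$, $(b)$, $(c)$ and of the ordering are correct, and your case-1 derivation of $(d)$ (using that with your choice of $a,b$ one has $h^{+}([a,b])=h(b)$ and $h^{-}([a,b])=h(a)$, hence $d_r(a_p,b_p),d_r(a_q,b_q)\leq 6C_0$ by Lemma \ref{LowerBoundLengthGeod}) is a clean variant of the paper's subtraction argument.

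The one soft spot is item 3. There you rightly switch to ``innermost'' crossings taken between a lowest point and a highest point of $\alpha$ (the paper's choice, which is what guarantees that $a$ precedes $b$ when the contradiction argument is unavailable), but the claim that the computation of $(d)$ then ``goes through'' is not literal: with that choice the height on $[a,b]$ may still dip below $h(a)$ and rise above $h(b)$ by an amount of order $C_0$ (only $2\Delta h(x,y)\leq 14C_0$ of length is available to rule it out), so the identity $h^{\pm}([a,b])=$ endpoint heights fails and your route to $(d)$ only yields a larger constant. The fix is the paper's subtraction argument, for which you already have every ingredient: $l_N([a,b])=l_N(\alpha)-l_N([x,a])-l_N([b,y])$, bounded above via Lemma \ref{PathConnectingPointsInHoroProduct} together with $l_N([x,a])\geq 2\big(h(x)-h^{-}(\alpha)\big)\geq d_r(x_q,y_q)-6C_0$ and $l_N([b,y])\geq 2\big(h^{+}(\alpha)-h(y)\big)\geq d_r(x_p,y_p)-6C_0$, gives $d_{\mathcal{H},N}(a,b)\leq \Delta h(a,b)+13C_0$ in all cases. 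With that substitution the proposal is complete and matches the paper's proof in structure and constants.
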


Lemma \ref{TechShapeGeod} is illustrated in Figure \ref{NotaSectThm}. Its notations will be used in all section \ref{SectionThm}.

\begin{figure}
\begin{center}
\includegraphics[scale=0.8]{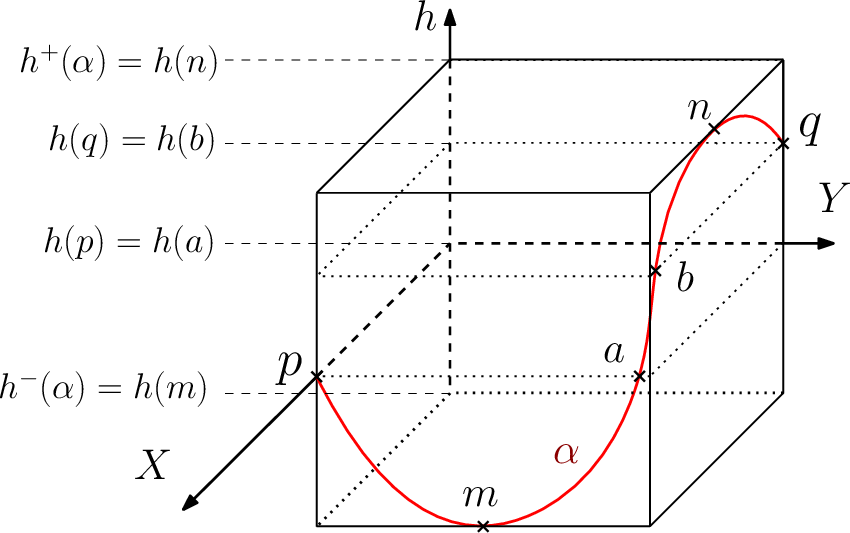}
\captionof{figure}{Notations of Lemma \ref{TechShapeGeod}.}
\label{NotaSectThm}
\end{center}
\end{figure}
\begin{proof}

Let us consider a point $m$ of $\alpha$ such that $h(m)=h^-(\alpha)$ and a point $n$ of $\alpha$ such that $h(n)=h^+(\alpha)$. We first assume that $m$ comes before $n$ in $\alpha$ oriented from $p$ to $q$. Let us call $a$ the first point between $m$ and $n$ at height $h(p)$ and $b$ the last point between $m$ and $n$ at height $h(q)$. Property $(a)$ of our Lemma is then satisfied. Let us denote $\alpha_1$ the part of $\alpha$ linking $p$ to $a$, $\alpha_2$ the part of $\alpha$ linking $a$ to $b$ and $\alpha_3$ the part of $\alpha$ linking $b$ to $q$. We have that $m$ is a point of $\alpha_1$ and that $n$ is a point of $\alpha_3$. Inequalities $2.$ and $3.$ of Lemma \ref{LowerBoundLengthGeod} used on $\alpha_1$ provide $l_N(\alpha_1)\geq d(p,m) +d(m,a)\geq 2\Delta h(p,m) \geq d_r(p_Y,q_Y)-6C_0$ and similarly $l_N(\alpha_3)\geq d_r(p_X,q_X)-6C_0$. Furthermore we have $l_N(\alpha_2)\geq \Delta h (p,q)$. Combining $l_N(\alpha_1)=l_N(\alpha)-l_N(\alpha_2)-l_N(\alpha_3)$ and Lemma \ref{PathConnectingPointsInHoroProduct} we have:
\begin{align}
l_N(\alpha_1)&\leq\Delta h (p,q)+d_r(p_X,q_X)+d_r(p_Y,q_Y)+C_0-\Delta h (p,q)-d_r(p_X,q_X)+6C_0\nonumber
\\&\leq d_r(p_Y,q_Y)+7C_0.\label{EquseInther2}
\end{align}
We have similarly that $l_N(\alpha_3)\leq d_r(p_X,q_X)+7C_0$ and that $d_{\bowtie}(a,b)=l_N(\alpha_2)\leq\Delta h (p,q) +13C_0$. It gives us $|d_{\bowtie}(a,b)-\Delta h (p,q)|\leq 13C_0$, point $(d)$ of our lemma. Furthermore, using Lemma \ref{HPlusHMinusGeod} on $\alpha$ and $\alpha_1$ provides:
\begin{align*}
\left|h^-(\alpha) -\left(h(p)-\frac{1}{2}d_r(p_Y,q_Y)\right)\right|&\leq 4C_0,
\\\left|h^-(\alpha_1) -\left(h(p)-\frac{1}{2}d_r(p_Y,a_Y)\right)\right|&\leq 4C_0.
\end{align*}
Since $h^-(\alpha)=h^-(\alpha_1)$ we have:
\begin{equation}
\left|d_r(p_Y,a_Y)-d_r(p_Y,q_Y)\right|\leq 16C_0,\label{EquseInther3}
\end{equation}
which is the first inequality of $(b)$. Using the first point of Lemma \ref{LowerBoundLengthGeod} on $\alpha_1$ in combination with inequality (\ref{EquseInther2}) gives us:
\begin{align*}
d_r(p_Y,q_Y)+7C_0\geq& l_N(\alpha_1)\geq \Delta h(p,a) +d_r(p_X,a_X)+d_r(p_Y,a_Y)-15C_0
\\\geq&d_r(p_X,a_X)+d_r(p_Y,a_Y)-15C_0
\\\geq&d_r(p_X,a_X)+d_r(p_Y,q_Y)-31C_0\text{, by inequality (\ref{EquseInther3}).}
\end{align*}
Then $d_r(p_X,q_X)\leq 38 C_0$ the second inequality of point $(b)$ holds. We prove similarly the inequality $(c)$ of this lemma. This ends the proof when $m$ comes before $n$. If $n$ comes before $m$, the proof is still working by orienting $\alpha$ from $q$ to $p$ hence switching the roles between $p$ and $q$. 
\\\\We will now prove that if $h(p)\leq h(q)-7C_0$ then $m$ comes before $n$ on $\alpha$ oriented from $p$ to $q$. Let us assume that $h(p)\leq h(q)-7C_0$. We will proceed by contradiction, let us assume that $n$ comes before $m$, using $h(m)\leq h(p)\leq h(q)\leq h(n)$ it implies:
\begin{align*}
l_N(\alpha)\geq& d_{\bowtie}(p,n)+d_{\bowtie}(n,m)+d_{\bowtie}(m,q) \geq  \Delta h(p,n)+\Delta h(n,m)+\Delta h(m,q)
\\\geq&\Delta h(p,q)+\Delta h(q,n)+\Delta h(m,p)+\Delta h(p,q)+\Delta h(q,n)+\Delta h(m,p)+\Delta h(p,q)
\\\geq&2\Delta h(p,q)+\Delta h(p,q)+2\Delta h(m,p)+2\Delta (q,n)
\\\geq&14C_0+\Delta h(p,q)+2(h(p)-h^-(\alpha))+2(h^+(\alpha)-h(q)).
\end{align*}
However Lemma \ref{LowerBoundLengthGeod} applied on $\alpha$ provides $h^+(\alpha)\geq h(q)+\frac{1}{2}d_r(p_X,q_X)-3C_0$ and $h^-(\alpha)\leq h(p)-\frac{1}{2}d_r(p_Y,q_Y)+3C_0$. Then:
\begin{align*}
l_N(\alpha)\geq& 14C_0+\Delta h(p,q) +d_r(p_X,q_X)+d_r(p_Y,q_Y)-12C_0
\\\geq&\Delta h(p,q) +d_r(p_X,q_X)+d_r(p_Y,q_Y)+2C_0,
\end{align*}
which contradict Lemma \ref{PathConnectingPointsInHoroProduct}. Hence, if $h(p)\leq h(q)-7C_0$, the point $m$ comes before the point $n$ and by the first part of the proof, $1.$ holds. Similarly, if $h(q)\leq h(p)-7C_0$ then $n$ comes before $m$ and then $2.$ holds. Otherwise when $|h(p)-h(q)|\leq 7C_0$ both cases could happened, then $1.$ or $2.$ hold.
\end{proof}

This previous lemma essentially means that if $p$ is sufficiently below $q$, the geodesic $\alpha$ first travels in a copy of $Y$ in order to "lose" the relative distance between $p_Y$ and $q_Y$, then it travels upward using a vertical geodesic from $a$ to $b$ until it can "lose" the relative distance between $p_X$ and $q_X$ by travelling in a copy of $X$. It looks like three successive geodesics of hyperbolic spaces, glued together. The idea is that the geodesic follows a shape similar to the path $\gamma$ we constructed in Lemma \ref{PathConnectingPointsInHoroProduct}. The following theorem tells us that a geodesic segment is in the constant neighbourhood of three vertical geodesics. It is similar to the hyperbolic case, where a geodesic segment is in a constant neighbourhood of two vertical geodesics. 

\begin{thm}\label{THMA}
Let $N$ be an admissible norm. Let $p=(p_X,p_Y)$ and $q=(q_X,q_Y)$ be two points of $X\bowtie Y$ and let $\alpha$ be a geodesic segment of $( X\bowtie Y ,d_{\bowtie})$ linking $p$ to $q$. Let $C_0=(2853\delta C_N+2^{851})^2$, there exist two vertical geodesics $V_1=(V_{1,X},V_{1,Y})$ and $V_2=(V_{2,X},V_{2,Y})$ such that:
\begin{enumerate}
\item If ~ $h(p)\leq h(q)-7C_0$ ~ then $\alpha$ is in the $196C_0C_N$-neighbourhood of $V_1\cup(V_{1,X},V_{2,Y})\cup V_2$
\item If ~ $h(p)\geq h(q)+7C_0$ ~ then $\alpha$ is in the $196C_0C_N$-neighbourhood of $ V_1\cup(V_{2,X},V_{1,Y})\cup V_2$
\item If ~ $|h(p)-h(q)|\leq 7C_0$ ~ then at least one of the conclusions of $1.$ or $2.$ holds.
\end{enumerate}
Specifically $V_1$ and $V_2$ can be chosen such that $p$ is close to $V_1$ and $q$ is close to $V_2$.
\end{thm}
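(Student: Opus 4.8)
The plan is to invoke Lemma \ref{TechShapeGeod} to cut $\alpha$ into three controlled pieces and then analyse each piece by projecting it to the hyperbolic components $H_p$ and $H_q$, where hyperbolicity is available. First reduce to the case $h(x)\le h(y)-7C_0$: if instead $h(y)\le h(x)-7C_0$, apply that case to $\alpha$ read from $y$ to $x$ and relabel $V_1\leftrightarrow V_2$ to obtain conclusion $2.$; if $|h(x)-h(y)|\le 7C_0$, Lemma \ref{TechShapeGeod} already says one of the two configurations occurs. So fix $a=(a_p,a_q)$ and $b=(b_p,b_q)$ on $\alpha$ as in Lemma \ref{TechShapeGeod}, write $\alpha=\alpha_1\cup\alpha_2\cup\alpha_3$ with $\alpha_1$ from $x$ to $a$, $\alpha_2$ from $a$ to $b$, $\alpha_3$ from $b$ to $y$, and keep the bounds $(b),(c),(d)$ of that lemma together with the extremal-height bounds of Lemma \ref{HPlusHMinusGeod} in hand. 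Using Property \ref{ExistsVertGeodInx} and Proposition \ref{PropGeodVertInHoro}, take $V_1$ a vertical geodesic of $\mathcal H$ through $x$, $V_2$ a vertical geodesic of $\mathcal H$ through $y$, and set $W:=(V_{1,p},V_{2,q})$; note $x\in V_1$ and $y\in V_2$, which gives the last sentence of the theorem.

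The second step is to place $a$ and $b$ in an $O(C_0C_N)$-neighbourhood of $W$. Point $(d)$ and Corollary \ref{lengthGeod} give $d_r(a_p,b_p)+d_r(a_q,b_q)\le 28C_0$. Since $h_p(a_p)=h(x)=h_p(x_p)$ and $d_r(x_p,a_p)\le 22C_0$, we get $d_{H_p}(x_p,a_p)\le 22C_0$, so $a_p$ is $22C_0$-close to $V_{1,p}$; symmetrically $b_q$ is $22C_0$-close to $V_{2,q}$ by $(c)$. For the crossed estimates, Lemma \ref{LinkDrAndDSameHeight} turns $d_r(a_q,b_q)\le 28C_0$ into a bound $d\big(a_q,V_{b_q}(h_q(a_q))\big)\le 29C_0$, and then the fact that $t\mapsto d(V_{b_q}(t),V_{2,q}(t))$ is non-increasing above $h_q(b_q)$ (Busemann convexity plus boundedness, exactly as used in the proof of Lemma \ref{LemmeAmande}) together with $d_{H_q}(b_q,y_q)\le 22C_0$ yields that $a_q$ is $O(C_0)$-close to $V_{2,q}$; symmetrically $b_p$ is $O(C_0)$-close to $V_{1,p}$. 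Reassembling the $p$- and $q$-estimates via the inequality $d_{\mathcal H,N}\le 2C_N(d_{H_p}+d_{H_q})$ of Property \ref{HoroProdConnected} puts $a$ and $b$ in an $O(C_0C_N)$-neighbourhood of $W$.

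The third step treats the three pieces, always descending to the components. For $\alpha_2$: Lemma \ref{LemDistBigHaut} applied along $\alpha_2$, whose length is $\Delta h(a,b)+O(C_0)$ by $(d)$, confines it to heights in $[h(x)-O(C_0),\,h(y)+O(C_0)]$; Property \ref{ProprSplitLenght} then bounds $l_{H_p}(\alpha_{2,p})$ by $\Delta h(a,b)+O(C_0)$, so $\alpha_{2,p}$ stays $O(C_0)$-close to the geodesic $[a_p,b_p]$, which itself (small $d_r(a_p,b_p)$, endpoints near $V_{1,p}$, Lemma \ref{LinkDrAndDSameHeight} and the slim-triangle Property \ref{ThinTrianglePropr}) stays $O(C_0)$-close to $V_{1,p}$; symmetrically $\alpha_{2,q}$ stays near $V_{2,q}$, hence $\alpha_2$ stays near $W$. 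For $\alpha_1$ (endpoints $x$ and $a$ at equal height): Lemma \ref{HPlusHMinusGeod} gives $h^+(\alpha_1)\le h(x)+O(C_0)$ while $h^-(\alpha_1)=h^-(\alpha)\approx h(x)-\tfrac12 d_r(x_q,y_q)$, so $\alpha_{1,p}$ descends and returns, each half being a near-vertical quasigeodesic of $H_p$, which forces $\alpha_{1,p}$ to stay $O(C_0)$-close to $V_{1,p}$; meanwhile $\alpha_{1,q}$ is a quasigeodesic of $H_q$ joining $x_q$ to $a_q$, hence (Lemma \ref{LEM0}) $O(C_0)$-close to $V_{1,q}\cup V_{a_q}$, and $V_{a_q}$ is $O(C_0)$-close to $V_{2,q}$ by the second step; combining coordinates, $\alpha_1$ lies in an $O(C_0C_N)$-neighbourhood of $V_1\cup W$. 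The piece $\alpha_3$ is handled symmetrically and lies near $W\cup V_2$. Taking the union and the largest of the constants obtained, which a direct count keeps below $196C_0C_N$, yields the theorem.

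The main obstacle is that $\mathcal H$ is not hyperbolic, so every "a geodesic fellow-travels two vertical geodesics" argument must be run in the components $H_p,H_q$ and then recombined; this requires first proving that the projections $\alpha_{i,p},\alpha_{i,q}$ are genuine quasigeodesics, i.e.\ that their lengths nearly equal the distances between their endpoints — which is exactly what the length estimates of Lemma \ref{LowerBoundLengthGeod}, Corollary \ref{lengthGeod} and Lemma \ref{HPlusHMinusGeod} provide — before the hyperbolic estimates (Lemma \ref{LEM0}, Lemma \ref{LinkDrAndDSameHeight}) can be applied. The secondary difficulty is bookkeeping: repeatedly transporting "closeness to a vertical geodesic" between different heights through Busemann monotonicity while tracking all constants so as to land under the stated bound $196C_0C_N$.
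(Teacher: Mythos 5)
Your overall toolbox (cut $\alpha$ using Lemma \ref{TechShapeGeod}, show the projections are quasigeodesics via the length estimates, then run hyperbolic fellow-travelling in $H_p$ and $H_q$ and recombine with Property \ref{HoroProdConnected}) is the right one, but your normalization of the vertical geodesics creates a genuine gap. You take $V_1$ \emph{through} $x$ and $V_2$ \emph{through} $y$, and in the third step you assert that $\alpha_{1,p}$, which descends from $x_p$ to the lowest point $m_p$ (depth about $\tfrac12 d_r(x_q,y_q)$, unbounded) and comes back up, ``stays $O(C_0)$-close to $V_{1,p}$''. This deduction is false: a vertical geodesic is canonical only in the upward direction (the ray asymptotic to $a_p$); its downward prolongation is an arbitrary choice, and Busemann convexity makes $t\mapsto d\big(V(t),V'(t)\big)$ non-increasing, so closeness at height $h(x)$ controls nothing \emph{below} that height, where verticals diverge. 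The estimate $d_r(x_p,m_p)\leq 30C_0$ only says (via Lemma \ref{LinkDrAndDSameHeight}) that the vertical through $m_p$ passes within $O(C_0)$ of $x_p$ at height $h(x)$; it does not say $m_p$ lies near a prescribed vertical through $x_p$. Concretely, in $\mathbb{H}^2$ (Log model) with $x_p=(0,0)$ and $m_p=(u,-T)$, $|u|=e^{15C_0}$ fixed and $T\to\infty$, one has $d_r(x_p,m_p)\approx 30C_0$ bounded while $d\big(m_p,V_{x_p}\big)\approx T+15C_0\to\infty$, $V_{x_p}$ being the \emph{unique} vertical line through $x_p$; in a tree the descent may simply take a different branch than $V_{1,p}$. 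The same problem occurs symmetrically for $\alpha_{3,q}$ against your $V_{2,q}=V_{y_q}$, since the $q$-projection of $\alpha_3$ dips below $h_q(y_q)$ by about $\tfrac12 d_r(x_p,y_p)$. (Your second step and the middle piece $\alpha_2$ are fine, because there all comparisons happen at heights between $h(x)$ and $h(y)$, where upward monotonicity applies.)

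This is precisely why the paper does not put $V_1$ through $x$: it takes $m,n\in\alpha$ realizing $h^-(\alpha)$ and $h^+(\alpha)$ and chooses $V_1=(V_{m_p},V_{x_q})$, $V_2=(V_{y_p},V_{n_q})$, i.e.\ in each component the vertical through the \emph{lowest} point of the relevant projected sub-path; then each projected piece is compared with the vertical through its own bottom point (endpoints close by $d_r(x_p,m_p),d_r(m_p,n_p),\dots\leq O(C_0)$ and lengths comparable, so hyperbolicity gives fellow-travelling), and one only concludes that $x$ and $y$ are $O(C_0C_N)$-\emph{close} to $V_1$, $V_2$ — which is all the statement claims. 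To repair your argument you must either adopt this choice of $V_{1,p}$ and $V_{2,q}$ (relaxing ``through $x$, $y$'' to ``close to $x$, $y$''), or prove that some vertical geodesic through $x_p$ passes uniformly close to $m_p$ — which, as the $\mathbb{H}^2$ example shows, is false in general.
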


\begin{figure}
\begin{center}
\includegraphics[scale=0.7]{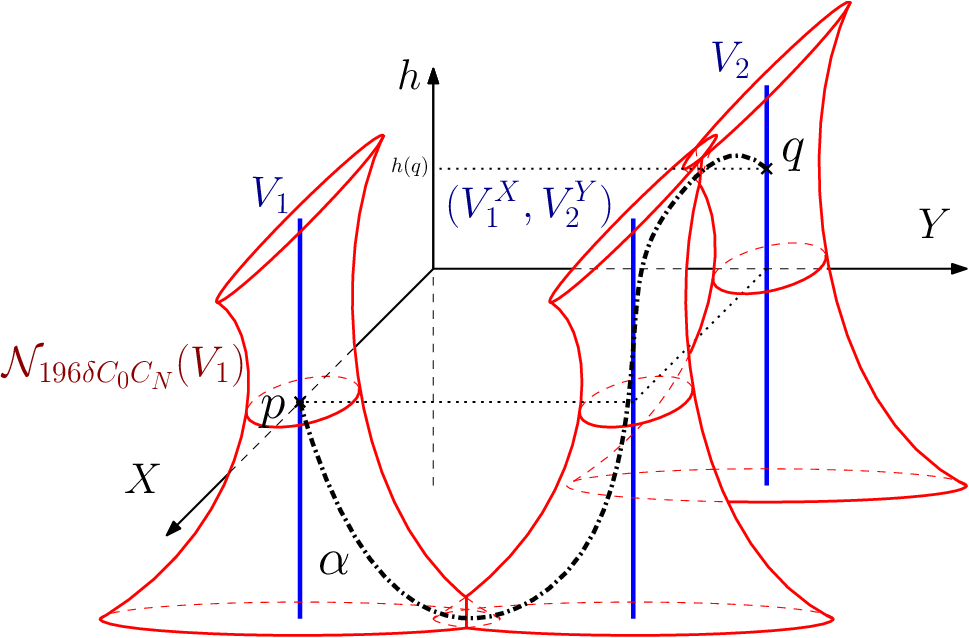}
\captionof{figure}{Theorem \ref{THMA}. The neighbourhood's shapes are distorted since when going upward, distances are contracted in the "direction" $X$ and expanded in the "direction" $Y$.}
\label{FigThmA}
\end{center}
\end{figure}

Figure \ref{FigThmA} pictures the $196C_0C_N$-neighbourhood of such vertical geodesics when $h(p)\leq h(q)-7C_0$.
When $|h(p)-h(q)|\leq 7C_0$, there are two possible shapes for a geodesic segment. In some cases, two points can be linked by two different geodesics, one of type $1$ and one of type $2$. 

\begin{proof}
Let $m=(m_X,m_Y)$ be a point of $\alpha$ such that $h(m)=h^-(\alpha)$, and $n=(n_X,n_Y)$ be a point of $\alpha$ such that $h(n)=h^+(\alpha)$. Then by Lemma \ref{HPlusHMinusGeod} we have:
\begin{equation}
\left|\Delta h(p,m)-\frac{1}{2}d_r(p_Y,q_Y)\right|\leq 4C_0.\label{UseAfter2}
\end{equation} 
We show similarly that:
\begin{equation}
\left|\Delta h(q,n)-\frac{1}{2}d_r(p_X,q_X)\right|\leq 4C_0.\label{UseAfter3}
\end{equation} 
In the first case we assume that $h(p)\leq h(q)-7C_0$. With notations as in Lemma \ref{TechShapeGeod}, and by inequality (\ref{EquseInther2}), we have that $l_N([p,a])\leq d_r(p_Y,q_Y)+7C_0$, hence:
\begin{align}
l_N([p,m])=&l_N([p,a])-l_N([a,m])\leq d_r(p_Y,q_Y)+7C_0-\Delta h(a,m)\nonumber
\\\leq&\frac{1}{2}d_r(p_Y,q_Y)+11C_0\text{, since }\Delta h(p,m)=\Delta h(a,m).\label{UseAfter1}
\end{align}
It follows from this inequality that:
\begin{align*}
d_{X}(p_X,m_X)=& 2d_{X\times Y}(p,m)-d_{Y}(p_Y,m_Y)\leq 2d_{\bowtie}(p,m)-d_{Y}(p_Y,m_Y)
\\\leq& 2l_N([p,m])-d_{Y}(p_Y,m_Y)\leq d_r(p_Y,q_Y)+22C_0-\Delta h(p,m)\leq \frac{1}{2}d_r(p_Y,q_Y)+26C_0.
\end{align*}
Then:
\begin{align*}
d_r(p_X,m_X)=& d_{X}(p_X,m_X)-\Delta h(p,m)\leq  \frac{1}{2}d_r(p_Y,q_Y)+26C_0-\Delta h(p,m)
\\\leq&30C_0 \text{, by inequality }(\ref{UseAfter2}).
\end{align*} 
Similarly $d_r(p_Y,m_Y)\leq 30C_0$. Let us consider the vertical geodesic $V_{m_X}$ of $X$ containing $m_X$, and the vertical geodesic $V_{p_Y}$ of $Y$ containing $p_Y$. Let us denote $p'_X$ the point of $V_{m_X}$ at the height $h(p)$. Since $d_r(p_X,m_X)\leq 30C_0$, Lemma \ref{LinkDrAndDSameHeight} applied on $p_X$ and $m_X$ provides $d_{X}(p_X,p'_X)\leq 31C_0$.
We will then consider two paths of $X$. The first one is $\alpha_{1,X}=[p_X,m_X]$, the part of $\alpha_X$ linking $p_X$ to $m_X$. The second one is $[m_X,p'_X]$ a piece of vertical geodesic linking $m_X$ to $p'_X$. We show that these two paths have close length. Using Property \ref{ProprSplitLenght} with inequalities (\ref{UseAfter2}) and (\ref{UseAfter1}) provides us with:
\begin{align*}
l_{X}([p_X,m_X])&\leq 2l_N([p,m])-l_{Y}([p_Y,m_Y])\leq 2\left(\frac{1}{2}d_r(p_Y,q_Y)+11C_0\right) -\Delta h(p,m)
\\&\leq\Delta h(p,m)+30C_0
\end{align*} 
Furthermore $l_{X}([p_X,m_X])\geq \Delta h(p,m)$ and we know that $l_{X}([m_X,p_X'])=\Delta h(p,m)$, hence:
\begin{align*}
\big\vert l_{X}([p_X,m_X])-l_{X}([m_X,p_X'])\big\vert\leq 30 C_0
\end{align*}
We already proved that their end points are also close to each other $d(p_X,p'_X)\leq 31C_0$. Since $\delta\leq C_0$, the property of hyperbolicity of $X$ gives us that $\alpha_{1,X}$ is in the $(31+30+1)C_0=62C_0$-neighbourhood of $[m_X,p'_X]$, a part of the vertical geodesic $V_{m_X}$. We show similarly that $\alpha_{1,Y}$ is in the $62C_0$-neighbourhood of $V_{p_Y}$. Since $N$ is an admissible norm, Property \ref{HoroProdConnected} gives us that $\alpha_{1}$ is in the $124C_0C_N$-neighbourhood of $(V_{m_X},V_{p_Y})$. We show similarly that $\alpha_3$, the portion of $\alpha$ linking $n$ to $q$, is in the $124C_0C_N$-neighbourhood of $(V_{q_X},V_{n_Y})$. 
We now focus on $\alpha_2$, the portion of $\alpha$ linking $m$ to $n$. Let us denote $[m_X,n_X]$ the path $\alpha_{2,X}$ and $[m_Y,n_Y]$ the path $\alpha_{2,Y}$. Then Lemma \ref{HPlusHMinusGeod} provides us with:
\begin{equation}
\left|\Delta h(m,n)-\Big(\Delta h(p,q)+\frac{1}{2}d_r(p_Y,q_Y)+\frac{1}{2}d_r(p_X,q_X)\Big)\right|\leq 8C_0.\label{UseAfter4}
\end{equation}
However from Lemma \ref{PathConnectingPointsInHoroProduct} and since $1152\delta C_N\leq C_0$:
\begin{align*}
l_N(\alpha_2)=&l_N(\alpha)-l_N(\alpha_1)-l_N(\alpha_3)
\\\leq& \Delta h(p,q)+d_r(p_X,q_X)+d_r(p_Y,q_Y)+C_0-\Delta h(p,m)-\Delta h(n,q)
\\\leq& \Delta h(p,q)+\frac{1}{2}d_r(p_X,q_X)+\frac{1}{2}d_r(p_Y,q_Y)+9C_0\text{, by inequalities }(\ref{UseAfter2})\text{ and }(\ref{UseAfter3}).
\end{align*}
It follows from this inequality and the fact that $N$ is admissible that:
\begin{align*}
d_{X}(m_X,n_X)&\leq 2l_N(\alpha_2)-d_{Y}(m_Y,n_Y)\leq 2\Delta h(p,q)+d_r(p_X,q_X)+d_r(p_Y,q_Y)+18C_0-\Delta h(m,n)
\\&\leq \Delta h(m,n)+34C_0\text{, by inequality }(\ref{UseAfter4}).
\end{align*}
Thus:
\begin{align*}
d_r(m_X,n_X)=& d_{X}(m_X,n_X)-\Delta h(m,n)\leq 34C_0 .
\end{align*} 
In the same way we have $d_r(m_Y,n_Y)\leq 34C_0$. Let us denote $n_X'$ the point of $V_{m_X}$ at the height $h(n_X)$. Since $d_r(p_X,m_X)\leq 34C_0$, Lemma \ref{LinkDrAndDSameHeight} applied on $m_X$ and $n_X$ provides:
\begin{equation}
d_{X}(m_X,n'_X)\leq 35C_0\label{UseAfter25}
\end{equation}
Hence we have proved that $\alpha_{2,X}$ and $[m_X,n_X']$ have their end points close to each other. Let us now prove that these paths have close lengths. We have that $l_{X}([m_X,n_X'])=\Delta h (m,n)$, and from inequalities (\ref{UseAfter2}) and (\ref{UseAfter3}) we have:
\begin{align*}
l_{X}([m_X,n_X])&\leq 2l_N(\alpha_{2,X})-l_{Y}([m_Y,n_Y])=2\Big(l_N(\alpha)-l_N(\alpha_1)-l_N(\alpha_3)\Big) -\Delta h(m,n)
\\&\leq 2\Big(15C_0+\Delta h (p,q) +d_r(p_X,q_X)+d_r(p_Y,q_Y)-\Delta h(p,m)-\Delta h(n,q)\Big) -\Delta h(m,n)
\\&\leq 2\Big(\Delta h (p,q) +d_r(p_X,q_X)+d_r(p_Y,q_Y)-\Delta h(p,m)-\Delta h(n,q)\Big) -\Delta h(m,n)
\\&\leq 2\Big(\Delta h (p,q) +\Delta h(p,m)+\Delta h(n,q)+16C_0\Big) -\Delta h(m,n)+30C_0\leq\Delta h(m,n)+62C_0
\end{align*}
As $l_{X}([m_X,n_X])\geq \Delta h(m,n)$ we obtain:
\begin{equation}
|l_{X}([m_X,n_X])-l_{X}([m_X,n_X'])|\leq 62 C_0\label{UseAfter26}
\end{equation} 
Then by similar arguments as for the path $\alpha_{1,X}$, inequalities (\ref{UseAfter25}) and (\ref{UseAfter26}) show that $\alpha_{2,X}$ is in the $(35+62+1)C_0=98C_0$ neighbourhood of $V_{m_X}$. Similarly we prove that $\alpha_{2,Y}$ is in the $98C_0$ neighbourhood of $V_{n_Y}$. Since $N$ is an admissible norm, Property \ref{HoroProdConnected} gives us that $\alpha_{2}$ is in the $196C_0C_N$-neighbourhood of $(V_{m_X},V_{n_Y})$.
\\\\In the second case, we assume that $h(q)\leq h(p)-7C_0$. Then by switching the role of $p$ and $q$, Lemma \ref{TechShapeGeod} gives us the result identically.
\\\\In the third case, we assume that $|h(p)- h(q)|\leq 7C_0$. Then Lemma \ref{TechShapeGeod} tells us that one of the two previous situations prevail, which proves the result.
\end{proof}

\subsection{Coarse monotonicity}

We will see that the following definition is related to being close to a vertical geodesic.

\begin{defn}
Let $C$ be a non negative number. A geodesic $\alpha:I\to X\bowtie Y $ of $X\bowtie Y$ is called $C$-coarsely increasing if $\forall t_1,t_2\in I$:
\begin{align*}
\big(\ t_2> t_1+C\ \big)\Rightarrow\big(\ h(\alpha(t_2))> h(\alpha(t_2))\ \big).
\end{align*}
The geodesic $\alpha$ is called $C$-coarsely decreasing if $\forall t_1,t_2\in I$:
\begin{align*}
\big(\ t_2> t_1+C\ \big)\Rightarrow\big(\ h(\alpha(t_2))< h(\alpha(t_2))\ \big).
\end{align*}
\end{defn}

The next lemma links the coarse monotonicity and the fact that a geodesic segment is close to vertical geodesics.

\begin{lemma}\label{ShapeCoarsMonotGeodHoroPoduct}
Let $N$ be an admissible norm and let $C_0=(2853\delta C_N+2^{851})^2$. Let $p=(p_X,p_Y)$ and $q=(q_X,q_Y)$ be two points of $X\bowtie Y$ and let $\alpha$ be a geodesic segment of $( X\bowtie Y ,d_{\bowtie})$ linking $p$ to $q$. Let $m\in\alpha$ and $n\in\alpha$ be two points in $ X\bowtie Y $ such that $h^-(\alpha)=h(m)$ and $h^+(\alpha)=h(n)$. We have:
\begin{enumerate}
\item  If ~ $h(p)\leq h(q)-7C_0$, then $\alpha$ is $17C_0$-coarsely decreasing on $[p,m]$ and $17C_0$-coarsely increasing on $[m,n]$ and $17C_0$-coarsely decreasing on $[n,q]$.
\item  If ~ $h(p)\geq h(q)+7C_0$, then $\alpha$ is $17C_0$-coarsely increasing on $[p,n]$ and $17C_0$-coarsely decreasing on $[n,m]$ and $17C_0$-coarsely increasing on $[m,q]$.
\item  If ~ $|h(p)-h(q)|\leq 7C_0$ ~ then the conclusions of $1.$ or $2.$ holds.
\end{enumerate}

\end{lemma}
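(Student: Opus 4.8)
The plan is to reduce the statement to a family of one‑dimensional length estimates. The workhorse is an \emph{almost‑vertical criterion}: if $\beta\colon[0,L]\to\mathcal H$ is a geodesic with $h_0:=h(\beta(0))$, $h_1:=h(\beta(L))$ and $L\le|h_0-h_1|+K$ for some $K\ge0$, then $\beta$ is $K$‑coarsely decreasing if $h_0>h_1$, $K$‑coarsely increasing if $h_0<h_1$, and both hold vacuously if $h_0=h_1$ (then $L\le K$). I would prove this by taking $0\le s_1<s_2\le L$, using additivity of $l_N$ along $\beta$ to write $L=l_N(\beta|_{[0,s_1]})+(s_2-s_1)+l_N(\beta|_{[s_2,L]})$, bounding each outer piece below by the corresponding height difference with Lemma \ref{LemDistBigHaut}, so that $s_2-s_1\le L-|h_0-h(\beta(s_1))|-|h(\beta(s_2))-h_1|$, and then discarding the two absolute values via $|z|\ge z$ (resp.\ $|z|\ge -z$) in the case $h_0>h_1$ (resp.\ $h_0<h_1$); after substituting $L\le|h_0-h_1|+K$ this collapses to $s_2-s_1\le K$ as soon as $h$ fails to vary in the expected direction between $\beta(s_1)$ and $\beta(s_2)$, which is the contrapositive of the claim.

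Granting this, the work is to cut $\alpha$ at $m$ and $n$ and estimate the length of each piece. Suppose first that $m$ precedes $n$ on $\alpha$ oriented from $x$ to $y$: by the proof of Lemma \ref{TechShapeGeod} this is forced when $h(x)\le h(y)-7C_0$, and is one of the two possibilities when $|h(x)-h(y)|\le 7C_0$. Then $l_N(\alpha)=l_N(\alpha|_{[x,m]})+l_N(\alpha|_{[m,n]})+l_N(\alpha|_{[n,y]})$, each summand is a geodesic length hence at least the corresponding height difference (Lemma \ref{LemDistBigHaut}), while Lemma \ref{PathConnectingPointsInHoroProduct} gives $l_N(\alpha)\le\Delta h(x,y)+d_r(x_p,y_p)+d_r(x_q,y_q)+C_0$ (using $1152\delta C_N\le C_0$). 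Feeding in the estimates on $h^-(\alpha)=h(m)$ and $h^+(\alpha)=h(n)$ provided by Lemma \ref{HPlusHMinusGeod}, namely $h(x)-h(m)\ge\tfrac12 d_r(x_q,y_q)-4C_0$, $\ h(n)-h(y)\ge\tfrac12 d_r(x_p,y_p)-4C_0$ and $\ h(n)-h(m)\ge\Delta h(x,y)+\tfrac12 d_r(x_p,y_p)+\tfrac12 d_r(x_q,y_q)-8C_0$ — all of which remain valid after relabelling $\{x,y\}$, so hold regardless of the sign of $h(x)-h(y)$ — and isolating each of the three summands in turn, a short computation yields
\[
l_N(\alpha|_{[x,m]})\le\Delta h(x,m)+17C_0,\qquad l_N(\alpha|_{[m,n]})\le\Delta h(m,n)+17C_0,\qquad l_N(\alpha|_{[n,y]})\le\Delta h(n,y)+17C_0 .
\]
Since $h(m)=h^-(\alpha)\le\min\{h(x),h(n)\}$ and $h(n)=h^+(\alpha)\ge\max\{h(y),h(m)\}$, applying the almost‑vertical criterion with $K=17C_0$ to the three pieces gives conclusion $1$.

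To finish, I would treat the remaining configuration and the other cases by symmetry. If $n$ precedes $m$, the same computation with the pieces $[x,n],[n,m],[m,y]$ — equivalently, the previous paragraph applied inside $H_q\bowtie H_p$, where the height is $-h$ and $C_0$ is unchanged — swaps the roles of $m$ and $n$ and turns each ``decreasing'' into ``increasing'', giving conclusion $2$. Case $2$ of the statement then follows from case $1$ by exchanging $x$ and $y$, and case $3$ is immediate since one of the two orderings of $m$ and $n$ always occurs. I expect the real work to be concentrated in the middle paragraph: the three length inequalities must be pushed through with exactly the right constants so that every piece lands within $17C_0$ of being vertical, and one must be careful to use Lemma \ref{HPlusHMinusGeod} in the form that does not presuppose $h(x)\le h(y)$, so that the borderline case $3$ is genuinely covered; the remainder is bookkeeping.
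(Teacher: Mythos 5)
Your proposal is correct and takes essentially the same route as the paper: both bound the length of each subsegment $[x,m]$, $[m,n]$, $[n,y]$ by its height difference plus $O(C_0)$ (via Lemma \ref{PathConnectingPointsInHoroProduct}, Lemma \ref{HPlusHMinusGeod}, and the ordering of $m$ and $n$ coming from the proof of Lemma \ref{TechShapeGeod}), and then conclude coarse monotonicity by the same elementary argument that you package as an ``almost-vertical criterion'' and the paper runs as an inline contradiction. The only difference is bookkeeping: the paper imports the bound $l_N([x,m])\leq\tfrac12 d_r(x_q,y_q)+11C_0$ from the proof of Theorem \ref{THMA}, whereas you re-derive all three length bounds directly by isolating each summand, which is a self-contained but equivalent computation.
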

\begin{proof}
Assume that $h(p)\leq h(q)-7C_0$. Then from inequality (\ref{UseAfter1}) in the proof of Theorem \ref{THMA}, $l_N([p,m])\leq \frac{1}{2} d_r(p_Y,q_Y)+11C_0$. Furthermore Lemma \ref{HPlusHMinusGeod} gives us that $\left|\Delta h(p,m)-\frac{1}{2}d_r(p_Y,q_Y)\right|\leq 4C_0$. Then:
\begin{align}
l_N([p,m])\leq \Delta h(p,m)+15C_0.\label{useInHere123}
\end{align}
We will proceed by contradiction, assume that $[p,m]$ is not $15C_0$-coarsely decreasing, then there exists $i_1\in\alpha$, $i_2\in\alpha$ such that $h(i_1)=h(i_2)$ and $l([i_1,i_2])> 15C_0$. Hence:
\begin{align*}
l_N([p,m])&\geq l_N([p,i_1])+l_N([i_1,i_2])+l_N([i_2,m])\geq\Delta h(p,i_1)+l_N([i_1,i_2])+\Delta h(i_2,m)
\\&>\Delta h(p,m)+15C_0,
\end{align*}
which contradicts inequality (\ref{useInHere123}). Then $[p,m]$ is $15C_0$-coarsely decreasing. We show in a similar way that $[m,n]$ is $17C_0$-coarsely increasing and that $[n,q]$ is $15C_0$-coarsely decreasing. This proves the first point of our lemma. The second point is proved by switching the roles of $p$ and $q$. We now assume $|h(p)-h(q)|\leq 7C_0$, as in the proof of Theorem \ref{THMA} the inequality (\ref{UseAfter1}) or a corresponding inequality holds, which ends the proof.
\end{proof}

\subsection{Shapes of geodesic rays and geodesic lines}

In this section we are focusing on using the previous results to get informations on the shapes of geodesic rays and geodesic lines. We first link the coarse monotonicity of a geodesic ray to the fact that it is close to a vertical geodesic. Let $\lambda\geq 1$ and $c\geq0$, a $(\lambda,c)$-quasigeodesic of the metric space $( X\bowtie Y ,d_{\bowtie})$ is the image of a function $\phi :\mathbb{R}\to  X\bowtie Y $ verifying that $\forall t_1,t_2\in\mathbb{R}$:
\begin{equation}
\frac{|t_1-t_2|}{\lambda}-c\leq d_{\bowtie}\big(\phi(t_1),\phi(t_2)\big)\leq\lambda|t_1-t_2|+c
\end{equation}

\begin{lemma}\label{ProjAreQuasiGeod}
Let $N$ be an admissible norm and let $C_0=(2853\delta C_N+2^{851})^2$. Let $\alpha=(\alpha_X,\alpha_Y)$ be a geodesic ray of $( X\bowtie Y ,d_{\bowtie})$ and let $K$ be a positive number such that $\alpha$ is $K$-coarsely monotone. Then $\alpha_X$ and $\alpha_Y$ are $(1,26C_0+8K)$-quasigeodesics. 
\end{lemma}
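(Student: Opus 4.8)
The plan is to treat the upper and lower quasigeodesic bounds separately, reducing everything to Corollary \ref{lengthGeod} and to the height estimates of Lemma \ref{LowerBoundLengthGeod}. Fix $t_1<t_2$ and set $x=(x_p,x_q)=\alpha(t_1)$, $y=(y_p,y_q)=\alpha(t_2)$, and $\beta=\alpha_{|[t_1,t_2]}$; since $\alpha$ is parametrised by arclength, $\beta$ is a geodesic segment of $\mathcal{H}$ with $d_{\mathcal{H},N}(x,y)=t_2-t_1$. The only algebraic facts I will repeatedly use are $d_{H_p}(x_p,y_p)=\Delta h(x,y)+d_r(x_p,y_p)$ and $d_{H_q}(x_q,y_q)=\Delta h(x,y)+d_r(x_q,y_q)$, which follow from the definition of the height on $\mathcal{H}$ and of the relative distances.

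For the upper bound no monotonicity is needed: Corollary \ref{lengthGeod} gives $\Delta h(x,y)+d_r(x_p,y_p)+d_r(x_q,y_q)\le d_{\mathcal{H},N}(x,y)+15C_0=(t_2-t_1)+15C_0$, and since relative distances are non-negative this forces both $d_{H_p}(x_p,y_p)$ and $d_{H_q}(x_q,y_q)$ to be at most $(t_2-t_1)+15C_0\le (t_2-t_1)+26C_0+8K$. For the lower bound, if $t_2-t_1\le K$ then $d_{H_p}(x_p,y_p)\ge 0\ge (t_2-t_1)-K\ge (t_2-t_1)-(26C_0+8K)$ and similarly for $q$, so I may assume $t_2-t_1>K$. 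Up to relabelling I may assume $\alpha$ is $K$-coarsely increasing (the decreasing case being symmetric), so that $h(x)<h(y)$. The key step — and the only one carrying real content — is to observe that coarse monotonicity pins down where $\beta$ attains its extreme heights: combining the implication defining $K$-coarse increase with the fact that $h$ is $1$-Lipschitz along the arclength-parametrised $\alpha$, every $t\in[t_1,t_2]$ satisfies $h(x)-K\le h(\alpha(t))\le h(y)+K$, hence $h^-(\beta)\ge h(x)-K$ and $h^+(\beta)\le h(y)+K$. Plugging these into points $2$ and $3$ of Lemma \ref{LowerBoundLengthGeod} (applicable since $h(x)\le h(y)$) yields $\tfrac12 d_r(x_p,y_p)\le K+3C_0$ and $\tfrac12 d_r(x_q,y_q)\le K+3C_0$, so both relative distances are at most $2K+6C_0$.

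To finish, I apply Corollary \ref{lengthGeod} in the other direction: $t_2-t_1=d_{\mathcal{H},N}(x,y)\le \Delta h(x,y)+d_r(x_p,y_p)+d_r(x_q,y_q)+15C_0$, whence
\begin{equation*}
d_{H_p}(x_p,y_p)=\Delta h(x,y)+d_r(x_p,y_p)\ge (t_2-t_1)-15C_0-d_r(x_q,y_q)\ge (t_2-t_1)-21C_0-2K,
\end{equation*}
and symmetrically $d_{H_q}(x_q,y_q)\ge (t_2-t_1)-21C_0-2K$. Since $21C_0+2K\le 26C_0+8K$, both $\alpha_p$ and $\alpha_q$ are $(1,26C_0+8K)$-quasigeodesics. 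I expect the main obstacle to be nothing more than the bookkeeping of constants and making the two-paragraph reduction clean; the genuinely non-formal ingredient is the passage from $K$-coarse monotonicity plus Lemma \ref{LowerBoundLengthGeod} to the bound $d_r(x_p,y_p),\,d_r(x_q,y_q)=O(K+C_0)$, and once that is in hand the quasigeodesic inequalities are immediate.
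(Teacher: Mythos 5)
Your proof is correct, and it takes a genuinely different route from the paper's. The paper applies Lemma \ref{TechShapeGeod} to the segment of $\alpha$ between $\alpha(t_1)$ and $\alpha(t_2)$: coarse monotonicity forces the points $a,b$ of that lemma to lie within distance $K$ of the endpoints, point $(d)$ then gives $d_{\mathcal{H},N}(x,y)\leq\Delta h(x,y)+13C_0+4K$, and the admissibility inequality $d_{H_p}(x_p,y_p)\leq 2d_{\mathcal{H},N}(x,y)-d_{H_q}(x_q,y_q)$ converts this into the two-sided bound on each factor distance. You bypass Lemma \ref{TechShapeGeod} entirely: you note that $K$-coarse monotonicity together with the $1$-Lipschitz height (Lemma \ref{LemDistBigHaut}) pins $h^{\pm}$ of the subsegment within $K$ of $h(x)$ and $h(y)$, feed this into points $2$ and $3$ of Lemma \ref{LowerBoundLengthGeod} to get $d_r(x_p,y_p),\,d_r(x_q,y_q)\leq 2K+6C_0$, and then use the distance formula of Corollary \ref{lengthGeod} together with $d_{H_p}(x_p,y_p)=\Delta h(x,y)+d_r(x_p,y_p)$ to conclude; the case split $t_2-t_1\leq K$, the increasing/decreasing symmetry, and the hypothesis $h(x)\leq h(y)$ required by Lemma \ref{LowerBoundLengthGeod} are all correctly handled. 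Both arguments rest on the same underlying estimates (Lemma \ref{LowerBoundLengthGeod} also underlies Lemma \ref{TechShapeGeod}), but yours is more self-contained and slightly sharper: it makes explicit that the upper quasigeodesic bound needs no monotonicity at all, and it yields constants $15C_0$ and $21C_0+2K$, comfortably within the stated $26C_0+8K$; the paper's detour through Lemma \ref{TechShapeGeod} mainly buys consistency with the notation and estimates it reuses in Theorem \ref{THMA} and Lemma \ref{ShapeCoarsMonotGeodHoroPoduct}.
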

\begin{proof}
Let $t_1$ and $t_2$ be two times. Let us denote $p=(p_X,p_Y)=\alpha(t_1)$ and $q=(q_X,q_Y)=\alpha(t_2)$. We apply Lemma \ref{TechShapeGeod} on the part of $\alpha$ linking $p$ to $q$ denoted by $[p,q]$. By $K$-coarse monotonicity of $\alpha$ we have that $d(p,a)_{ X\bowtie Y ,N}\leq K$ and $d_{\bowtie}(b,q)\leq K$. Hence using $d)$ of Lemma \ref{TechShapeGeod}:
\begin{align*}
\Delta h(p,q)\leq d_{\bowtie}(p,q)&\leq d_{\bowtie}(p,a)+d_{\bowtie}(a,b)+d_{\bowtie}(b,q)\leq K+\Delta h(a,b)+13C_0+K
\\&\leq \Delta h(p,q)+\Delta h(p,a)+\Delta h(b,q)+13C_0+2K\leq\Delta h(p,q)+13C_0+4K.
\end{align*} 
Furthermore, $d_{X}(p_X,q_X)\geq \Delta h(p_X,q_X)=\Delta h(p,q)$ and $d_{Y}(p_Y,q_Y)\geq \Delta h(p,q)$. Since $N$ is an admissible norm we have:
\begin{align*}
\Delta h(p,q)\leq d_{X}(p_X,q_X)&=2d_{X\times Y}(p,q)-d_{Y}(p_Y,q_Y)\leq 2d_{\bowtie}(p,q)-d_{Y}(p_Y,q_Y)
\\&\leq 2\Delta h(p,q)+13C_0+4K-\Delta h(p,q)\leq\Delta h(p,q)+13C_0+4K .
\end{align*}
Hence:
\begin{align*}
d_{\bowtie}(p,q)-26C_0-8K\leq d_{X}(p_X,q_X)\leq d_{\bowtie}(p,q)+26C_0+8K ,
\end{align*}
By definition we have $p_X=\alpha_X(t_1)$, $q_X=\alpha_X(t_2)$ and $d_{\bowtie}(p,q)=|t_1-t_2|$. Then $\alpha_X$ is a $(1,26C_0+8K)$-quasigeodesic ray. We prove similarly that $\alpha_Y$ is a $(1,26C_0+8K)$-quasigeodesic ray.
\end{proof}

We will now make use of the rigidity property of quasi-geodesics in Gromov hyperbolic spaces, presented in Theorem 3.1 p.41 of \cite{Papa1}.

\begin{thm}[\cite{Papa1}]\label{THMPAPADO}
Let $H$ be a $\delta$-hyperbolic geodesic space. If $f:\mathbb{R}\rightarrow H$ is a $(\lambda,k)$-quasi geodesic, then there exists a constant $\kappa>0$ depending only on $\delta,\lambda$ and $k$ such that the image of $f$ is in the $\kappa$-neighbourhood of a geodesic in $H$.
\end{thm}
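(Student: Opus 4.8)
The plan is to prove this stability statement --- the \emph{Morse lemma} for $\delta$-hyperbolic spaces --- by the classical argument, in which Proposition~\ref{LemmeBrid} is the engine. First I would \emph{tame} the quasigeodesic: replace $f$ by the continuous path $f'$ which coincides with $f$ on the integers and is a geodesic segment from $f(n)$ to $f(n+1)$ on each $[n,n+1]$. A routine estimate shows that $f'$ is again a quasigeodesic, with constants depending only on $\lambda,k$; that $\mathrm{im}(f)$ and $\mathrm{im}(f')$ lie at Hausdorff distance $\le\lambda+k$; and --- crucially --- that $f'$ is \emph{tame}, $l\bigl(f'|_{[s,t]}\bigr)\le\mu_1|s-t|+\mu_2$ for all $s<t$ with $\mu_i=\mu_i(\lambda,k)$. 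Since a geodesic line within bounded distance of $\mathrm{im}(f')$ is automatically within bounded distance of $\mathrm{im}(f)$, it suffices to treat $f'$, which I rename $c$.

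To produce the geodesic line I would join $c(-n)$ to $c(n)$ by a geodesic segment $\gamma_n$ and, using properness of $H$ (the relevant case here, the components $H_p,H_q$ of the paper being proper), extract by Arzel\`a--Ascoli a subsequence converging locally uniformly to a geodesic line $\gamma:\mathbb{R}\to H$; in a non-proper $H$ one follows the direct construction of \cite{Papa1} instead. It then suffices to bound, by a constant independent of $n$, the Hausdorff distance between $\gamma_n$ and $c|_{[-n,n]}$, since that bound passes to the limit.

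The core is the following claim about a geodesic segment $[p,q]$ with endpoints $p=c(a)$, $q=c(b)$: the number $D:=\sup_{x\in[p,q]}d\bigl(x,\mathrm{im}(c|_{[a,b]})\bigr)$ is bounded by a constant $D_0(\delta,\lambda,k)$. Let $x_0\in[p,q]$ attain $D$, and choose $y\in[p,x_0]$, $z\in[x_0,q]$ with $d(x_0,y)=d(x_0,z)=2D$ when the segment is long enough on that side, and the endpoint $p$, resp.\ $q$, otherwise. Pick $y'=c(s)$, $z'=c(t)$ in $\mathrm{im}(c|_{[a,b]})$ with $d(y,y'),d(z,z')\le D$. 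The concatenation $P:=[y,y']\,*\,c|_{[\min(s,t),\max(s,t)]}\,*\,[z',z]$ joins $y$ to $z$ and, by the triangle inequality together with $d(x_0,\mathrm{im}(c|_{[a,b]}))=D$ (which also covers the endpoint cases, since $p,q\in\mathrm{im}(c|_{[a,b]})$), stays at distance $\ge D$ from $x_0$. Proposition~\ref{LemmeBrid} applied to $P$ and the geodesic subsegment $[y,z]\ni x_0$ gives $D\le\delta|\log_2 l(P)|+1$. On the other hand $l(P)\le 2D+l\bigl(c|_{[\min(s,t),\max(s,t)]}\bigr)\le 2D+\mu_1|s-t|+\mu_2$, and $|s-t|\le\lambda\bigl(d(y',z')+k\bigr)\le\lambda(6D+k)$ since $d(y',z')\le d(y',y)+d(y,z)+d(z,z')\le 6D$; hence $l(P)\le AD+B$ with $A,B$ depending only on $\delta,\lambda,k$. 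Therefore $D\le\delta\log_2(AD+B)+1$, which forces $D\le D_0(\delta,\lambda,k)$ since the logarithm eventually lies below the identity.

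Finally I would upgrade the claim to a two-sided Hausdorff bound. Fix $t_0\in[a,b]$; if $c(t_0)$ lies at distance $>D_0$ from $[p,q]$, take $t_1<t_0<t_2$ with $d(c(t_i),[p,q])=D_0$ and $d(c(t),[p,q])>D_0$ on $(t_1,t_2)$, and let $p_1,p_2\in[p,q]$ be nearest points to $c(t_1),c(t_2)$. By the claim every point of $[p_1,p_2]$ is within $D_0$ of $\mathrm{im}(c|_{[a,b]})$, and --- being on $[p,q]$ --- only of the part with parameters in $[a,t_1]\cup[t_2,b]$; a covering argument on the connected set $[p_1,p_2]$ then produces $w_0$ within $D_0$ of some $c(\sigma_1)$, $\sigma_1\le t_1$, and of some $c(\sigma_2)$, $\sigma_2\ge t_2$. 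Then $d(c(\sigma_1),c(\sigma_2))\le 2D_0$, so $|\sigma_1-\sigma_2|\le\lambda(2D_0+k)$ by the quasigeodesic inequality, whence $|t_1-t_2|\le\lambda(2D_0+k)$ and $d(c(t_0),[p,q])\le d(c(t_0),c(t_1))+D_0\le\lambda^2(2D_0+k)+k+D_0$. This gives $\kappa_0=\kappa_0(\delta,\lambda,k)$ with $d_{\mathrm{Haus}}\bigl(c|_{[a,b]},[p,q]\bigr)\le\kappa_0$; applying it to $[c(-n),c(n)]$ and letting $n\to\infty$ gives $d_{\mathrm{Haus}}(\mathrm{im}(c),\mathrm{im}(\gamma))\le\kappa_0$, so $\mathrm{im}(f)$ lies in the $\kappa$-neighbourhood of $\gamma$ with $\kappa=\kappa_0+\lambda+k$. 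The delicate points, and where I expect the real work to be, are the exponential-divergence estimate of the previous paragraph --- the one genuinely clever step, which is exactly where Proposition~\ref{LemmeBrid} enters --- and the covering argument just used, which forces the relevant portion of $\mathrm{im}(c)$ off the interval $(t_1,t_2)$.
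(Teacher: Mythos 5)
This theorem is quoted by the paper from \cite{Papa1} (Th\'eor\`eme 3.1, p.~41) with no proof given, so there is no internal argument to compare yours against; judged on its own, your proposal is the classical Morse-lemma proof (essentially Bridson--Haefliger, Theorem III.H.1.7) and it is correct: taming $f$ into a continuous tame quasigeodesic $c$, bounding $D=\sup_{x\in[p,q]}d\bigl(x,\mathrm{im}(c)\bigr)$ by playing the concatenated path $P$ against Proposition \ref{LemmeBrid} (the same exponential-divergence engine the paper uses for its own estimates), upgrading to a two-sided Hausdorff bound via the connectedness argument on $[p_1,p_2]$, and extracting the limiting geodesic line from the segments $[c(-n),c(n)]$. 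Three small points to make explicit. First, the extraction of the line uses properness of $H$ (Arzel\`a--Ascoli), which the statement does not assume; this is harmless here, since the theorem is only applied to the proper spaces $H_p$ and $H_q$, and your construction does yield a bi-infinite geodesic because each $\gamma_n$ meets the fixed ball $B\bigl(c(0),\kappa_0\bigr)$ at a point whose parameter distance to both endpoints tends to infinity. Second, after taming, the quasigeodesic constants of $c$ are some $(\lambda',k')$ depending only on $(\lambda,k)$, so the occurrences of $\lambda,k$ in your estimates for $c$ should be read as $\lambda',k'$; the claimed dependence of $\kappa$ on $\delta,\lambda,k$ is unaffected. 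Third, in the endpoint case $y=p$ (resp.\ $z=q$) you should choose $y'=y=c(a)$ (resp.\ $z'=z=c(b)$), so that the corresponding bridge of $P$ degenerates and the lower bound $d(x_0,\mathrm{im}(P))\ge D$ holds exactly as you assert; with $d(x_0,y)=2D$ available on the non-degenerate side, the triangle-inequality step is fine.
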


\begin{lemma}\label{LemCoarsClosVert}
Let $N$ be an admissible norm and let $T_1$ and $T_2$ be two real numbers. Let $\alpha=(\alpha_X,\alpha_Y):[T_1,+\infty[\to  X\bowtie Y $ be a geodesic ray of $( X\bowtie Y ,d_{\bowtie})$. Let $K$ be a positive number such that $\alpha$ is $K$-coarsely monotone. 
Then there exists a constant $\kappa>0$ depending only on $K$, $\delta$ and $N$ such that $\alpha$ is in the $\kappa$-neighbourhood of a vertical geodesic ray $V:[T_2;+\infty[\to  X\bowtie Y $ and such that $d_{\bowtie}\big(\alpha(T_1),V(T_2)\big)\leq\kappa$.
\end{lemma}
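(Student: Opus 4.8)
The plan is to treat the two projections $\alpha_p,\alpha_q$ separately, show that each fellow-travels a vertical geodesic of its factor, and then glue these into a vertical geodesic of $\mathcal{H}$. Write $C_0=(2853\delta C_N+2^{851})^2$ and assume without loss of generality that $\alpha$ is $K$-coarsely increasing, the decreasing case being symmetric (exchange the roles of $H_p$ and $H_q$). I would first record the consequences of the hypotheses: exactly as in the proof of Lemma \ref{ProjAreQuasiGeod}, Lemma \ref{TechShapeGeod}$(d)$ together with the coarse monotonicity gives $h(\alpha(t_2))-h(\alpha(t_1))\ge (t_2-t_1)-(13C_0+4K)$ for all $t_1\le t_2$, so $h(\alpha(t))\to+\infty$; and by Lemma \ref{ProjAreQuasiGeod} the projections $\alpha_p$ and $\alpha_q$ are $(1,c)$-quasigeodesic rays with $c=26C_0+8K$. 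Since $h_p(\alpha_p(t))=h(\alpha(t))\to+\infty$ and $h_q(\alpha_q(t))=-h(\alpha(t))\to-\infty$, Proposition \ref{PropHautGDLH} shows that $\alpha_p$ converges to $a_p$ while $\alpha_q$ converges to some $b\in\partial H_q\setminus\{a_q\}$.

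For the $H_p$-factor this is routine: by the stability of quasigeodesics (Theorem \ref{THMPAPADO}, in its version for rays, obtained from the segment version by a limiting argument since $H_p$ is proper) $\alpha_p$ lies in the $\kappa_1$-neighbourhood of a geodesic ray $\beta_p$ starting within $\kappa_1$ of $\alpha_p(T_1)$, where $\kappa_1$ depends only on $\delta$ and $c$. Since $h_p\circ\beta_p\to+\infty$, the ray $\beta_p$ is vertical, and geodesic completeness of $H_p$ lets me extend it to a vertical geodesic \emph{line} $\widehat V_p$, which I parametrise by its height. As $h_p$ is $1$-Lipschitz, sliding along $\widehat V_p$ upgrades the fellow-travelling to $d_{H_p}\big(\alpha_p(t),\widehat V_p(h_p(\alpha_p(t)))\big)\le 2\kappa_1$ for all $t\ge T_1$.

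The delicate step, and the main obstacle, is the $H_q$-factor, since $\alpha_q$ moves \emph{away} from $a_q$, so it is not itself a vertical ray and its Morse straightening need not sit inside a vertical line. The remedy is to choose (by Property \ref{ExistsVertGeodInx}) a vertical geodesic ray $R:[0,+\infty[\to H_q$ with $R(0)=\alpha_q(T_1)$, so $h_q(R(s))=h_q(\alpha_q(T_1))+s$, and to form the bi-infinite path $\Gamma:\mathbb{R}\to H_q$ defined by $\Gamma(-s)=R(s)$ for $s\ge 0$ and $\Gamma(u)=\alpha_q(T_1+u)$ for $u\ge 0$. Using that $h_q$ grows at unit rate along $R$ and decreases at unit rate (up to the error $13C_0+4K$) along $\alpha_q$, the two halves of $\Gamma$ separate in height, giving $d_{H_q}(\Gamma(u),\Gamma(v))\ge (u-v)-(13C_0+4K)$ across the gluing point; combined with the triangle inequality, this shows $\Gamma$ is a $(1,c+13C_0+4K)$-quasigeodesic \emph{line}. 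Theorem \ref{THMPAPADO} then produces a geodesic line $\widehat V_q$ with the same endpoints at infinity as $\Gamma$, namely $a_q$ and $b$ — so $\widehat V_q$ is a vertical geodesic line of $H_q$ — lying within some $\kappa_2=\kappa_2(\delta,K,N)$ of $\Gamma$, hence of $\alpha_q$. Parametrising $\widehat V_q$ by height and sliding as before yields $d_{H_q}\big(\alpha_q(t),\widehat V_q(h_q(\alpha_q(t)))\big)\le 2\kappa_2$ for all $t\ge T_1$.

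To conclude, set $V:=(\widehat V_p,\widehat V_q)$, which is a vertical geodesic of $(\mathcal{H},d_{\mathcal{H},N})$ by Proposition \ref{PropGeodVertInHoro}, parametrised by height so that $V(s)=\big(\widehat V_p(s),\widehat V_q(-s)\big)$. For $t\ge T_1$ and $s:=h(\alpha(t))=h_p(\alpha_p(t))=-h_q(\alpha_q(t))$, the two factor estimates together with the comparison $d_{\mathcal{H},N}\le 2C_N(d_{H_p}+d_{H_q})$ of Property \ref{HoroProdConnected} give $d_{\mathcal{H},N}(\alpha(t),V(s))\le 4C_N(\kappa_1+\kappa_2)$, so $\alpha$ lies in the $4C_N(\kappa_1+\kappa_2)$-neighbourhood of $V$. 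Finally, $h(\alpha(t))\ge h(\alpha(T_1))-K$ for all $t\ge T_1$ by coarse monotonicity, so setting $T_2:=h(\alpha(T_1))-K$ and restricting $V$ to $[T_2,+\infty[$ gives a vertical geodesic ray which contains every point $V(h(\alpha(t)))$ and satisfies $d_{\mathcal{H},N}(\alpha(T_1),V(T_2))\le 4C_N(\kappa_1+\kappa_2)+K=:\kappa$. This $\kappa$ depends only on $\delta$, $K$ and $N$, which is what is required.
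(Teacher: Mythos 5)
Your proposal is correct and follows essentially the same route as the paper: use Lemma \ref{ProjAreQuasiGeod} to see the projections are $(1,26C_0+8K)$-quasigeodesics, apply quasi-geodesic stability (Theorem \ref{THMPAPADO}) to $\alpha_p$, and handle $\alpha_q$ by concatenating it with a vertical ray issued from $\alpha_q(T_1)$ so that the straightened geodesic is vertical, then combine the factors via $d_{\mathcal{H},N}\leq 2C_N(d_{H_p}+d_{H_q})$ and adjust the starting time. Your treatment is in fact slightly more explicit than the paper's (the verification that the glued path is a quasigeodesic line and the reparametrisation-by-height step), but the underlying argument is the same.
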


\begin{proof}
We assume without loss of generality that $\lim\limits_{t\rightarrow +\infty}h(\alpha(t))=+\infty$. Let $C_0=(2853\delta C_N+2^{851})^2$, by Lemma \ref{ProjAreQuasiGeod}, $\alpha_X$ is a $(1,26C_0+8K)$-quasi geodesic ray. Then Theorem \ref{THMPAPADO} says there exists $\kappa_X>0$ depending only on $26C_0+8K$ and $\delta$ such that $\alpha_X$ is in the $\kappa_X$-neighbourhood of a geodesic $V_X$. Since $C_0$ depends only on $\delta$ and $N$, $\kappa_X$ depends only on $K$, $\delta$ and $N$. Then $\lim\limits_{t\rightarrow +\infty}h(\alpha(t))=+\infty$ gives us $\lim\limits_{t\rightarrow +\infty}h(V_X(t))=+\infty$ which implies that $V_X$ is a vertical geodesic of $X$.
We will now build the vertical geodesic we want in $Y$. We have $\lim\limits_{t\rightarrow +\infty}h(\alpha_Y(t))=-\infty$ and by Lemma \ref{ProjAreQuasiGeod}:
\begin{align*}
\Delta h(\alpha_Y(t_1),\alpha_Y(t_2))-26C_0-8K\leq d_{Y}(\alpha_Y(t_1),\alpha_Y(t_2))\leq \Delta h(\alpha_Y(t_1),\alpha_Y(t_2))+26C_0+8K .
\end{align*}
Since $Y$ is Busemann, there exists a vertical geodesic ray $\beta$ starting at $\alpha_Y(T_1)$. Since $\beta$ is parametrised by its height, $\alpha_Y\cup\beta$ is also a $(1,26C_0+8K)$-quasi geodesic, hence there exists $\kappa_Y$ and $V_Y$ depending only on $K$, $\delta$ and $N$ such that $\alpha_Y\cup\beta$ is in the $\kappa_Y$-neighbourhood of $V_Y$. Since $\lim\limits_{t\rightarrow -\infty}h(V_Y(t))=+\infty$, $V_Y$ is a vertical geodesic of $Y$. 
\\Furthermore, by Property \ref{HoroProdConnected}, $d_{\bowtie}\leq 2C_N(d_{X}+d_{Y})$, hence there exists $\kappa$ depending only on $K$, $\delta$ and $N$ such that $\alpha$ is in the $\kappa$-neighbourhood (for $d_{\bowtie}$) of $(V_X,V_Y)$, a vertical geodesic of $( X\bowtie Y ,d_{\bowtie})$.  
Since $h(\alpha(t))\geq h(\alpha(T_1))-26C_0-8K=:M$, $\alpha$ is in the $\kappa$-neighbourhood of $\Big(V_X\big([M-\kappa;+\infty[\big),V_Y\big(]-\infty;-M+\kappa]\big)\Big)$ which is a vertical geodesic ray. 
\\\\We will now show that the starting points of $\alpha$ and $V$ are close to each other. Let us denote $T_1'$ a time such that $d_{\bowtie}\big(\alpha(T_1) , V(T_1')\big)\leq \kappa$, then $\Delta h \big(\alpha(T_1) , V(T_1')\big)\leq \kappa$, hence $|T_1'-M|\leq 26C_0+8K+\kappa$. Then by the triangle inequality: 
\begin{align*}
d_{\bowtie}\Big( \alpha(T_1), V(M-\kappa)\Big)\leq &d_{\bowtie}\Big( \alpha(T_1), V(T_1')\Big)+d_{\bowtie}\Big( V(T_1'), V(M-\kappa)\Big)
\\\leq& \kappa +26C_0+8K+\kappa + \kappa=26C_0+8K+3 \kappa
\end{align*} 
Let us denote $\kappa':=26C_0+8K+3 \kappa\geq\kappa$ and $T_2:=M-\kappa$. Hence $\alpha:[T_1;+\infty[\to X\bowtie Y $ is in the $\kappa'$-neighbourhood of a vertical geodesic ray $V:[T_2:+\infty[\to X\bowtie Y $, we have $d_{\bowtie}\big(\alpha(T_1),V(T_2)\big)\leq\kappa'$ and $\kappa'$ depends only on $\delta$ and $K$.
\end{proof}

\begin{lemma}\label{RayChangesOnce}
Let $N$ be an admissible norm and let $\alpha:\mathbb{R}^+\to X\bowtie Y $ be a geodesic ray of $( X\bowtie Y ,d_{\bowtie})$. Then $\alpha$ changes its $17C_0$-coarse monotonicity at most once.
\end{lemma}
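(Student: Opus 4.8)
Suppose, for contradiction, that $\alpha$ changes its $17C_0$-coarse monotonicity more than once, where $C_0=(2853\delta C_N+2^{851})^2$. Restricting $\alpha$ to a suitable half-line (which is again a geodesic ray of $\mathcal H$) and relabelling, the plan is to reduce to the following situation: there are $0\le u_1<u_2$ with $u_1>17C_0$ and $u_2-u_1>17C_0$ such that $\alpha$ is $17C_0$-coarsely monotone on each of $[0,u_1]$, $[u_1,u_2]$ and $[u_2,+\infty)$, the direction on the middle interval being opposite to the (common) direction on the two outer ones. Exchanging the roles of $H_p$ and $H_q$ reverses the height function $h$ and sends admissible norms to admissible norms, so I may assume $\alpha$ is $17C_0$-coarsely decreasing on $[0,u_1]$ and on $[u_2,+\infty)$ and $17C_0$-coarsely increasing on $[u_1,u_2]$.

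The first step is to show $h(\alpha(t))\to-\infty$ as $t\to+\infty$. Since $\alpha$ is $17C_0$-coarsely decreasing on $[u_2,+\infty)$, the sequence $\bigl(h(\alpha(u_2+17C_0\,k))\bigr)_{k\in\mathbb N}$ is strictly decreasing, hence converges in $[-\infty,h(\alpha(u_2))]$, and coarse monotonicity forces $h(\alpha(t))$ to tend to the same limit. If this limit were finite, $\alpha|_{[u_2,+\infty)}$ would have height variation bounded by some $\Delta_0$, and for every $L>0$ I would apply Corollary \ref{lengthGeod} to the geodesic segment $\alpha|_{[u_2,u_2+L]}$ to get $L=d_{\mathcal H,N}(\alpha(u_2),\alpha(u_2+L))\le\Delta_0+d_r(\alpha_p(u_2),\alpha_p(u_2+L))+d_r(\alpha_q(u_2),\alpha_q(u_2+L))+15C_0$, while Lemma \ref{HPlusHMinusGeod} applied to the same segment gives $\Delta_0\ge h^+-h^-\ge\tfrac12\bigl(d_r(\alpha_p(u_2),\alpha_p(u_2+L))+d_r(\alpha_q(u_2),\alpha_q(u_2+L))\bigr)-8C_0$; combining these yields $L\le 3\Delta_0+31C_0$, absurd for $L$ large. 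Hence $h(\alpha(t))\to-\infty$.

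The core of the argument then contradicts the rigid trichotomy of Lemma \ref{ShapeCoarsMonotGeodHoroPoduct} on a long initial sub-segment. Fix $T$ so large that $T-17C_0>u_2$ and $h(\alpha(T))\le h(\alpha(0))-7C_0$, set $x=\alpha(0)$, $y=\alpha(T)$, and let $n_T$ (parameter $\sigma_T$) and $m_T$ (parameter $\rho_T$) be a highest, resp.\ lowest, point of $\alpha|_{[0,T]}$. By Lemma \ref{ShapeCoarsMonotGeodHoroPoduct}(2), $\alpha|_{[0,T]}$ is $17C_0$-coarsely increasing on $[x,n_T]=\alpha|_{[0,\sigma_T]}$, decreasing on $[n_T,m_T]=\alpha|_{[\sigma_T,\rho_T]}$ and increasing on $[m_T,y]$. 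Now I locate $\sigma_T,\rho_T$ from the assumed shape of $\alpha$: on $(17C_0,u_1)$ one has $h(\alpha(t))<h(x)\le h^+(\alpha|_{[0,T]})$ (coarse decrease on $[0,u_1]$), and since $h(\alpha(T))\to-\infty$, for $T$ large the minimum of $\alpha|_{[0,T]}$ is attained only on $(u_2,T]$, so $\rho_T>u_2$. Then either $\sigma_T\ge u_1$, in which case $\alpha|_{[0,u_1]}\subseteq\alpha|_{[0,\sigma_T]}$ is $17C_0$-coarsely increasing, contradicting that it is $17C_0$-coarsely decreasing with $u_1>17C_0$; or $\sigma_T<u_1$, which by the previous sentence forces $\sigma_T\le 17C_0$, whence $[u_1,u_2]\subseteq[17C_0,u_2]\subseteq[\sigma_T,\rho_T]$ and $\alpha|_{[u_1,u_2]}$ is $17C_0$-coarsely decreasing, contradicting that it is $17C_0$-coarsely increasing with $u_2-u_1>17C_0$. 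Either way we reach a contradiction, and the lemma follows.

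I expect the main obstacle to be the second step: ruling out a geodesic ray that stays within a bounded band of heights, which is precisely where Corollary \ref{lengthGeod} and Lemma \ref{HPlusHMinusGeod} must be played against each other. Once the height is known to tend to $-\infty$, the rest is bookkeeping — the position of the highest point of $\alpha|_{[0,T]}$ is essentially pinned down by the assumed extra turn of $\alpha$, in conflict with the shape that Lemma \ref{ShapeCoarsMonotGeodHoroPoduct} dictates.
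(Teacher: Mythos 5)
Your argument is correct and follows the same skeleton as the paper's proof: reduce, up to symmetry and restriction to a half-line, to a decrease--increase--decrease pattern whose last piece is an infinite ray, show that the height along that tail tends to $-\infty$, and then contradict the pattern imposed by Lemma \ref{ShapeCoarsMonotGeodHoroPoduct} on a long initial segment $[\alpha(0),\alpha(T)]$ with $h(\alpha(T))\le h(\alpha(0))-7C_0$. You diverge from the paper in two sub-steps. First, to rule out a tail of bounded height you play Corollary \ref{lengthGeod} against Lemma \ref{HPlusHMinusGeod} to get $L\le 3\Delta_0+31C_0$ for a geodesic segment confined to a height band of width $\Delta_0$; the paper instead invokes Lemma \ref{LemCoarsClosVert} (the coarsely monotone tail is close to a vertical geodesic ray, whose height is unbounded below once it is bounded above). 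Your route is more elementary, avoiding the quasi-geodesic stability input, and both work. Second, your endgame is more careful than the paper's: the paper concludes in one line that case 2 of Lemma \ref{ShapeCoarsMonotGeodHoroPoduct} makes $\alpha$ ``first coarsely increasing'', contradicting the initial decrease, whereas your dichotomy on the parameter $\sigma_T$ of a highest point also covers the corner case $\sigma_T\le 17C_0$, where the initial increasing piece is vacuous and the contradiction must instead come from $[u_1,u_2]\subseteq[\sigma_T,\rho_T]$; this patches a step the paper glosses over. Two small points to tighten: take spacing $17C_0+1$ rather than $17C_0$ so the definition of coarse decrease applies strictly, and your opening reduction (that $\alpha$ is coarsely monotone on all of $[u_2,+\infty)$ after the second change) is asserted rather than derived --- the paper secures it by first excluding three changes via Lemma \ref{ShapeCoarsMonotGeodHoroPoduct}, and you should insert that preliminary step (the lengths $u_1>17C_0$, $u_2-u_1>17C_0$ you require are the right reading of a genuine ``change'', a fuzziness already present in the paper's own statement).
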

\begin{proof}
Let $\alpha:\mathbb{R}^+\to X\bowtie Y $ be a geodesic ray. Thanks to Lemma \ref{ShapeCoarsMonotGeodHoroPoduct} $\alpha$ changes at most twice of $17C_0$-coarse monotonicity. Indeed, assume it changes three times, applying Lemma \ref{ShapeCoarsMonotGeodHoroPoduct} on the geodesic segment which includes these three times provides a contradiction. We will show in the following that it actually only changes once.
\\\hspace*{0.6cm}Assume $\alpha$ changes twice of $17C_0$-coarse monotonicity. Then $\alpha$ must be first $17C_0$-coarsely increasing or $17C_0$-coarsely decreasing. We assume without loss of generality that $\alpha$ is first $17C_0$-coarsely decreasing. Then there exist $t_1,t_2,t_3\in\mathbb{R}$ such that $\alpha$ is $17C_0$-coarsely decreasing on $[\alpha(t_1),\alpha(t_2)]$ then $17C_0$-coarsely increasing on $[\alpha(t_2),\alpha(t_3)]$ then $17C_0$-coarsely decreasing on $[\alpha(t_3),\alpha(+\infty)[$. 
\\Hence Lemma \ref{LemCoarsClosVert} applied on $[\alpha(t_3),\alpha(+\infty)[$ implies that there exists $\kappa>0$ depending only on $\delta$ (since the constant of coarse monotonicity depends only on $\delta$) and a vertical geodesic ray $V=(V_X,V_Y)$ such that $[\alpha(t_3),\alpha(+\infty)[$ is in the $\kappa$-neighbourhood of $V$. 
Since $h^+([\alpha(t_3),\alpha(+\infty)[)<+\infty$, we have that $\lim\limits_{t\rightarrow +\infty}h(\alpha(t))=-\infty$, hence there exists $t_4\geq t_3$ such that $h(\alpha(t_4))\leq h(\alpha(t_1))-7C_0$. Then  Lemma \ref{ShapeCoarsMonotGeodHoroPoduct} tells us that $\alpha$ is first $17C_0$-coarsely increasing, which contradicts what we assumed. 
\end{proof}

We have classified the possible shapes of geodesic rays. Since geodesic lines are constructed from two geodesic rays glued together, we will be able to classify their shapes too.

\begin{defn}\label{DefHpTypeHqType}
Let $N$ be an admissible norm and let $\alpha=(\alpha_X,\alpha_Y):\mathbb{R}\rightarrow  X\bowtie Y $ be a path of $( X\bowtie Y ,d_{\bowtie})$. Let $\kappa\geq 0$. 
\begin{enumerate}
\item $\alpha$ is called $X$-type at scale $\kappa$ if and only if:
\begin{enumerate}
\item $\alpha_X$ is in a $\kappa$-neighbourhood of a geodesic of $X$ 
\item $\alpha_Y$ is in a $\kappa$-neighbourhood of a vertical geodesic of $Y$.
\end{enumerate}
\item  $\alpha$ is called $Y$-type at scale $\kappa$ if and only if:
\begin{enumerate}
\item $\alpha_Y$ is in a $\kappa$-neighbourhood of a geodesic of $Y$ 
\item $\alpha_X$ is in a $\kappa$-neighbourhood of a vertical geodesic of $X$.
\end{enumerate}
\end{enumerate}
\end{defn}

The $X$-type paths follow geodesics of $X$, meaning that they are close to a geodesic in a copy of $X$ inside $ X\bowtie Y $. The $Y$-type paths follow geodesics of $Y$.

\begin{figure}
\begin{center}
\includegraphics[scale=1]{FinalThm.eps}
\captionof{figure}{Different type of geodesics in $X\bowtie Y$.}
\label{FinalThm}
\end{center}
\end{figure}

\begin{rem}
In a horospherical product, being close to a vertical geodesic is equivalent to be both $X$-type and $Y$-type.
\end{rem}

\begin{thm}\label{THMB}
Let $N$ be an admissible norm. There exists $\kappa\geq 0$ depending only on $\delta$ and $N$ such that for any $\alpha:\mathbb{R}\rightarrow  X\bowtie Y $ geodesic of $( X\bowtie Y ,d_{\bowtie})$ at least one of the two following statements holds.
\begin{enumerate}
\item $\alpha$ is a $X$-type geodesic at scale $\kappa$ of $( X\bowtie Y ,d_{\bowtie})$
\item $\alpha$ is a $Y$-type geodesic at scale $\kappa$ of $( X\bowtie Y ,d_{\bowtie})$
\end{enumerate}
\end{thm}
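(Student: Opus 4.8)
The plan is to classify the coarse shape of the height profile $t\mapsto h(\alpha(t))$ of a geodesic line, and then, in each case, to recognise one of the two projections $\alpha_{p},\alpha_{q}$ as a quasigeodesic of the corresponding factor while the other stays close to a vertical geodesic. Split $\alpha$ into the geodesic rays $\alpha^{+}=\alpha_{|[0,+\infty[}$ and $\alpha^{-}=\alpha_{|]-\infty,0]}$. By Lemma \ref{RayChangesOnce} each of them is $17C_{0}$-coarsely monotone near its free end; since the projections of a coarsely monotone geodesic ray are $(1,O(C_{0}))$-quasigeodesics (Lemma \ref{ProjAreQuasiGeod}), hence converge to a point of $\partial H_{p}$ along which $h$ tends to $+\infty$ or $-\infty$ (Proposition \ref{PropHautGDLH}), we get $h(\alpha(t))\to\ell_{\pm}\in\{+\infty,-\infty\}$ as $t\to\pm\infty$. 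If $\ell_{+}\ne\ell_{-}$, applying Lemma \ref{ShapeCoarsMonotGeodHoroPoduct} to the segments $\alpha_{|[-T,T]}$ with $T$ large shows that $h(\alpha(\mp T))$ realise the extreme heights of the segment, so the decreasing--increasing--decreasing shape of that lemma degenerates and $\alpha$ is $17C_{0}$-coarsely monotone on all of $\mathbb{R}$ (this excludes an interior oscillation). Then Lemma \ref{LemCoarsClosVert}, together with the symmetry $H_{p}\bowtie H_{q}\cong H_{q}\bowtie H_{p}$ (which reverses heights) to treat the coarsely decreasing tail, puts $\alpha$ in a bounded neighbourhood of a vertical geodesic line of $\mathcal{H}$, so $\alpha_{p}$ (resp. $\alpha_{q}$) is close to a vertical geodesic of $H_{p}$ (resp. $H_{q}$): $\alpha$ is then both $H_{p}$-type and $H_{q}$-type. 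The genuinely new cases are $\ell_{+}=\ell_{-}=+\infty$ (a ``valley'') and $\ell_{+}=\ell_{-}=-\infty$ (a ``mountain''), exchanged by the $H_{p}\leftrightarrow H_{q}$ symmetry; it suffices to treat the valley.

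In the valley case both arms of $\alpha$, traversed outwards, are $17C_{0}$-coarsely increasing, hence by Lemma \ref{LemCoarsClosVert} each lies within a bounded neighbourhood of a vertical geodesic ray of $\mathcal{H}$; thus $\alpha_{p}$ stays within a bounded neighbourhood of $V^{L}_{p}\cup V^{R}_{p}$ for two vertical geodesics of $H_{p}$, both asymptotic to $a_{p}$. The function $\ell\mapsto d_{H_{p}}(V^{L}_{p}(\ell),V^{R}_{p}(\ell))$ is Busemann-convex and bounded, hence non-increasing, and tends to $0$ as $\ell\to+\infty$ because $V^{L}_{p}$ and $V^{R}_{p}$ are asymptotic (the same convexity-plus-hyperbolicity mechanism used in the proof of Lemma \ref{LemmeAmande}); since $\alpha_{p}(t_{1})$ and $\alpha_{p}(t_{2})$ lie near $V^{L}_{p}$ and $V^{R}_{p}$ at $H_{p}$-heights that are large as soon as $t_{1},t_{2}$ are, this gives a uniform bound $d_{r}(\alpha_{p}(t_{1}),\alpha_{p}(t_{2}))\le O(C_{0})$, and the same estimate makes the relevant vertical geodesics coincide near the apex, so $\alpha_{p}$ lies in a bounded neighbourhood of a single vertical geodesic of $H_{p}$. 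Feeding this into Corollary \ref{lengthGeod} and using $\Delta h(\alpha_{q}(t_{1}),\alpha_{q}(t_{2}))=\Delta h(\alpha(t_{1}),\alpha(t_{2}))$ and $d_{\mathcal{H},N}(\alpha(t_{1}),\alpha(t_{2}))=|t_{1}-t_{2}|$ yields $d_{H_{q}}(\alpha_{q}(t_{1}),\alpha_{q}(t_{2}))=|t_{1}-t_{2}|-d_{r}(\alpha_{p}(t_{1}),\alpha_{p}(t_{2}))\pm15C_{0}=|t_{1}-t_{2}|\pm O(C_{0})$, so $\alpha_{q}$ is a $(1,O(C_{0}))$-quasigeodesic line of $H_{q}$ and hence, by Theorem \ref{THMPAPADO}, is at bounded distance from a geodesic of $H_{q}$. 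Therefore $\alpha$ is $H_{q}$-type; a mountain geodesic is $H_{p}$-type by symmetry, and in all cases the scale $\kappa$ accumulates only constants depending on $\delta$ and $N$, through $C_{0}$ and the constant of Theorem \ref{THMPAPADO}.

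The step I expect to be the main obstacle is the trichotomy ``coarsely monotone / valley / mountain'': one has to rule out a geodesic line whose height profile has a real interior oscillation, which is where the degeneration of Lemma \ref{ShapeCoarsMonotGeodHoroPoduct} on long segments is crucial, and one has to keep the orientation conventions straight so that the lemmas stated for coarsely increasing rays can be used on downward-pointing arms --- this is what the reflection symmetry of the horospherical product takes care of. The other delicate point is turning the heuristic ``the two upward arms of $\alpha_{p}$ merge'' into the uniform bound on $d_{r}(\alpha_{p}(t_{1}),\alpha_{p}(t_{2}))$; the essential input there is the monotonicity and vanishing of the distance between two asymptotic vertical geodesics, i.e. Busemann convexity combined with Gromov hyperbolicity.
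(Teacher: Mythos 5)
Your proposal follows essentially the same route as the paper: decompose $\alpha$ via Lemma \ref{RayChangesOnce} into two coarsely monotone rays, approximate each by a vertical geodesic ray using Lemma \ref{LemCoarsClosVert}, and finish with quasigeodesic stability (Theorem \ref{THMPAPADO}); your monotone/valley/mountain trichotomy is the paper's case distinction on the limits of $h(V_{1,p})$ and $h(V_{2,p})$. The only genuine variation is the last step in the valley case: the paper concatenates the two vertical $H_q$-rays into a (discontinuous) path $W_q$ and proves directly, using that $\alpha$ is a geodesic, admissibility of $N$ and Busemann monotonicity in $H_p$, that $W_q$ is a quasigeodesic of $H_q$; you instead first pin $\alpha_p$ to a single vertical geodesic of $H_p$ and then read off from Corollary \ref{lengthGeod} that $\alpha_q$ itself is a $(1,O(C_0))$-quasigeodesic line. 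Both versions work and cost the same; your computation $d_{H_q}(\alpha_q(t_1),\alpha_q(t_2))=|t_1-t_2|-d_r(\alpha_p(t_1),\alpha_p(t_2))\pm 15C_0$ is a correct use of Corollary \ref{lengthGeod}.

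Two inaccuracies in the valley case should be repaired, though neither is fatal. First, the claim that $d_{H_p}\bigl(V^{L}_p(\ell),V^{R}_p(\ell)\bigr)\to 0$ because the rays are asymptotic is false in a general proper Gromov hyperbolic Busemann space: convexity plus boundedness only gives that this function is non-increasing, and its limit can be a positive constant of size $O(\delta)$ (think of a space containing a thin flat strip whose two boundary lines are asymptotic vertical geodesics). Second, with the split at $t=0$ the arm $\alpha_{|[0,+\infty[}$ need not be coarsely increasing, and the sentence about the verticals ``coinciding near the apex'' does not follow from a statement at infinity. Both issues disappear if you split $\alpha$ at the point where its coarse monotonicity changes (as the paper does): then the two vertical rays provided by Lemma \ref{LemCoarsClosVert} start within $\kappa$ of the same point of $\alpha$, so their $H_p$-components are $O(C_N\kappa)$ apart at the common starting height, and the non-increasingness you established propagates this bound to every larger height; that alone yields the uniform bound on $d_r(\alpha_p(t_1),\alpha_p(t_2))$ you need, with constants depending only on $\delta$ and $N$. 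This monotonicity-from-the-apex estimate is exactly the Busemann step in the paper's argument, so the vanishing at infinity is never required.
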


\begin{proof}
It follows from Lemma \ref{RayChangesOnce} that $\alpha$ changes its coarse monotonicity at most once. Otherwise there would exist a geodesic ray included in $\alpha$ that changes at least two times of coarse monotonicity. We cut $\alpha$ in two coarsely monotone geodesic rays $\alpha_1:[0,+\infty[\rightarrow  X\bowtie Y $ and $\alpha_2:[0,+\infty[\rightarrow  X\bowtie Y $ such that up to a parametrisation $\alpha_1(0)=\alpha_2(0)$ and $\alpha_1\cup\alpha_2=\alpha$. 
By Lemma \ref{LemCoarsClosVert} there exists $\kappa_1$ and $\kappa_2$ depending only on $\delta$ such that $\alpha_1$ is in the $\kappa_1$-neighbourhood of a vertical geodesic ray $V_1=(V_{1,X},V_{1,Y}):[0;+\infty[\to  X\bowtie Y $ and such that  $\alpha_2$ is in the $\kappa_2$-neighbourhood of a vertical geodesic ray $V_2=(V_{2,X},V_{2,Y}):[0;+\infty[\to  X\bowtie Y $. This lemma also gives us $d_{\bowtie}\big(\alpha_1(0),V_1(0)\big)\leq\kappa_1$ and  $d_{\bowtie}\big(\alpha_2(0),V_2(0)\big)\leq\kappa_2$.
\\Assume that $\lim\limits_{t\rightarrow +\infty}h(V_{1,X}(t))=\lim\limits_{t\rightarrow +\infty}h(V_{2,X}(t))=+\infty$, then they are both vertical rays hence are close to a common vertical geodesic ray. Furthermore $\lim\limits_{t\rightarrow +\infty}h(V_{1,Y}(t))=\lim\limits_{t\rightarrow +\infty}h(V_{2,Y}(t))=-\infty$ in that case. Let $W_Y$ be the non continuous path of $Y$ defined as follows.
\begin{equation}
W_Y(t) = 
\left\{
    \begin{array}{ll}
        V_{1,Y}(-t) & \forall t\in ]-\infty;0]\\
        V_{2,Y}(t) &  \forall t\in ]0;+\infty[\\
    \end{array}
\right.\nonumber
\end{equation}
We now prove that $W_Y:\mathbb{R}\to Y$ is a quasigeodesic of $Y$. Let $t_1$ and $t_2$ be two real numbers. Since $V_{1,Y}$ and $V_{2,Y}$ are geodesics, $d_{Y}(W_Y(t_1),W_Y(t_2))=|t_1-t_2|$ if $t_1$ and $t_2$ are both non positive or both positive. Thereby we can assume without loss of generality that $t_1$ is non positive and that $t_2$ is positive. We also assume without loss of generality that $|t_1|\geq|t_2|$. The quasi-isometric upper bound is given by:
\begin{align*}
d_{Y}\big(W_Y(t_1),W_Y(t_2)\big)&=d_{Y}\big(V_{1,Y}(-t_1),V_{2,Y}(t_2)\big)\nonumber
\\&\leq d_{Y}\big(V_{1,Y}(-t_1),V_{1,Y}(0)\big)+d_{Y}\big(V_{1,Y}(0),V_{2,Y}(0)\big)+d_{Y}\big(V_{2,Y}(0),V_{2,Y}(t_2)\big)\nonumber
\\&\leq |t_1|+\kappa_1+\kappa_2+|t_2|
\\&\leq |t_1-t_2|+\kappa_1+\kappa_2,\text{ since }t_1\text{ and }t_2\text{ have different signs.}
\end{align*}
It remains to prove the lower bound of the quasi-geodesic definition on $W_Y$.
\begin{align}
d_{Y}\big(W_Y(t_1),W_Y(t_2)\big)&=d_{Y}\big(V_{1,Y}(-t_1),V_{2,Y}(t_2)\big)\nonumber
\\&\geq \frac{1}{2C_N}d_{\bowtie}\big(V_1(-t_1),V_2(t_2)\big)-d_{X}\big(V_{1,X}(-t_1),V_{2,X}(t_2)\big)\nonumber
\\&\geq \frac{1}{2C_N}d_{\bowtie}\big(\alpha(t_1),\alpha(t_2)\big)-\frac{\kappa_1+\kappa_2}{C_N}-d_{X}\big(V_{1,X}(-t_1),V_{2,X}(t_2)\big).\label{lala2}
\end{align}
The Busemann assumption on $X$ provides us with:
\begin{align*}
d_{X}\big(V_{1,X}(-t_1),V_{2,X}(-t_1)\big)\leq d_{X}\big(V_{1,X}(0),V_{2,X}(0)\big)\leq\kappa_1+\kappa_2.
\end{align*}
Since $\alpha$ is a geodesic and by using the triangle inequality on (\ref{lala2}) we have: 
\begin{align*}
d_{Y}\big(W_Y(t_1),W_Y(t_2)\big)&\geq \frac{|t_1-t_2|}{2C_N}-d_{X}\big(V_{1,X}(-t_1),V_{2,X}(-t_1)\big)-d_{X}\big(V_{2,X}(-t_1),V_{2,X}(t_2)\big)-\frac{\kappa_1+\kappa_2}{C_N}
\\&\geq \frac{|t_1-t_2|}{2C_N}-\Delta h\big(V_{2,Y}(-t_1),V_{2,Y}(t_2\big)-\left(\frac{1}{C_N}+1\right)(\kappa_1+\kappa_2).
\end{align*}
Assume that $\Delta h\big(V_{2,Y}(-t_1),V_{2,Y}(t_2)\big)\leq \frac{|t_1-t_2|}{4C_N}$, then: 
$$d_{Y}\big(W_Y(t_1),W_Y(t_2)\big)\geq \frac{|t_1-t_2|}{4C_N}-\left(\frac{1}{C_N}+1\right)(\kappa_1+\kappa_2).$$ 
Hence $W_Y$ is a $\left(\frac{1}{4C_N},\left(\frac{1}{C_N}+1\right)(\kappa_1+\kappa_2)\right)$ quasi-geodesic, which was the remaining case. Since $\kappa_1$ and $\kappa_2$ depend only on $\delta$ and $N$, there exists a constant $\kappa'$ depending only on $\delta$ and $N$ such that $V_{1,Y}\cup V_{2,Y}$ is in the $\kappa'$-neighbourhood of a geodesic of $Y$. The geodesic $\alpha$ is a $Y$-type geodesic in this case.
\\ Assume $\lim\limits_{t\rightarrow +\infty}h(V_{1,X}(t))=\lim\limits_{t\rightarrow +\infty}h(V_{2,X}(t))=-\infty$, we prove similarly that $\alpha$ is a $X$-type geodesic.
\end{proof}

If a geodesic is both $X$-type at scale $\kappa$ and $Y$-type at scale $\kappa$, then it is in a $\kappa$-neighbourhood of a vertical geodesic of $ X\bowtie Y $.

\subsection{Visual boundary of $ X\bowtie Y $}

We will now look at the visual boundary of our horospherical products. This notion is described for the Sol geometry in the work of Troyanov \cite{Troya} through the objects called geodesic horizons. We extend one of the definitions presented in page 4 of \cite{Troya} for horospherical products.

\begin{defn}
Two geodesics of a metric space $X$ are called asymptotically equivalent if they are at finite Hausdorff distance from each other. 
\end{defn}

\begin{defn}
Let $X$ be a metric space and let $o$ be a base point of $X$. The visual boundary of $X$ is the set of asymptotic equivalence classes of geodesic rays $\alpha:\mathbb{R^+}\rightarrow $ such that $\alpha(0)=o$, it is denoted by $\partial_o X$.
\end{defn}

We will use a result of \cite{Papa} to describe the visual boundary of horospherical products.

\begin{propr}[Property $10.1.7$ p.234 of \cite{Papa}]\label{PropertyPapadoRayStartPoint}
Let $X$ be a proper Busemann space, let $q$ be a point in $X$ and let $r:[0,+\infty[\to X$ be a geodesic ray. Then, there exists a unique geodesic ray $r'$ starting at $q$ that is asymptotic to $r$.  
\end{propr}

\begin{thm}\label{THMC}
Let $N$ be an admissible norm. We fix base points and directions $(w_{X},a_X)\in X\times\partial X$, $(w_{Y},a_Y)\in Y\times\partial Y$. Let $X\bowtie Y$ be the horospherical product with respect to $(w_{X},a_X)$ and $(w_{Y},a_Y)$. Then the visual boundary of $( X\bowtie Y ,d_{\bowtie})$ with respect to a base point $o=(o_X,o_Y)$ is given by:
\begin{align*}
\partial_o ( X\bowtie Y) =&\Big(\big(\partial X\setminus  \lbrace a_X\rbrace\big)\times\lbrace a_Y\rbrace\Big)\bigcup\Big(\lbrace a_X\rbrace\times\big(\partial Y\setminus \lbrace a_Y\rbrace\big) \Big)
\\=&\Big(\big(\partial X\times\lbrace a_Y\rbrace\big)\bigcup\big(\lbrace a_X\rbrace\times\partial Y\big)\Big)\setminus \lbrace(a_X,a_Y)\rbrace 
\end{align*}
\end{thm}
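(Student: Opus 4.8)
The goal is to identify the asymptotic equivalence classes of geodesic rays from $o$, and Theorem \ref{THMB} together with the ray-classification machinery (Lemma \ref{RayChangesOnce} and Lemma \ref{LemCoarsClosVert}) gives us essentially everything. The strategy is in three stages. First I would reduce the study of geodesic rays to the two ``coarsely monotone'' pieces: by Lemma \ref{RayChangesOnce}, a geodesic ray $\alpha:\mathbb{R}^+\to\mathcal{H}$ changes its $17C_0$-coarse monotonicity at most once, so either $\alpha$ is (eventually) $17C_0$-coarsely monotone, or it is coarsely decreasing then coarsely increasing (or vice versa) with a single switch. In the eventually-monotone case, Lemma \ref{LemCoarsClosVert} puts a tail of $\alpha$ in the bounded neighbourhood of a vertical geodesic ray $V=(V_p,V_q)$, and then $\alpha$ is asymptotic to $V$; a vertical geodesic ray going up has $\alpha_p\to a_p$ and $\alpha_q\to a_q$ — wait, one must be careful: a vertical ray of $\mathcal{H}$ \emph{going up} has $h(V_q(t))=-t\to-\infty$, so $V_q\to b_q$ for some $b_q\neq a_q$; and going \emph{down} we get $V_p\to b_p\neq a_p$ and $V_q\to a_q$. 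Actually the relevant content is: a vertical ray contributes a boundary point of the form $(a_p, b_q)$ with $b_q\ne a_q$ (downward) or $(b_p,a_q)$ with $b_p\ne a_p$ (upward) — so vertical rays already realise a subset of the claimed boundary.

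\textbf{Second stage: the non-monotone rays.} When $\alpha$ switches once, say coarsely decreasing then coarsely increasing (the ``$H_q$-type'' shape from Theorem \ref{THMB}), the two coarsely-monotone halves sit near vertical rays $V_1,V_2$ which both go \emph{up} in $H_p$; then the argument in the proof of Theorem \ref{THMB} shows $\alpha_p$ stays near a \emph{single} vertical geodesic of $H_p$ (Busemann convexity forces $V_{1,p}$ and $V_{2,p}$ together) while $W_q = V_{1,q}^{-1}\cup V_{2,q}$ is a genuine quasigeodesic of $H_q$, hence near a geodesic of $H_q$. In this case $\alpha_p$ converges to $a_p$ (since $\alpha_p$ is near a vertical, hence upward, geodesic of $H_p$) and $\alpha_q$ converges to an endpoint of a bi-infinite geodesic of $H_q$. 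Because $\alpha_q$ is a $(1,c)$-quasigeodesic (Lemma \ref{ProjAreQuasiGeod}) and $h_q(\alpha_q(t))=-h(\alpha(t))$ is bounded above (the switch happens, so $h(\alpha)$ is bounded below), the limiting endpoint of $\alpha_q$ is some $b_q$ with $h_q$ unbounded below along it — i.e. $b_q\ne a_q$. So such $\alpha$ contributes exactly the points $(a_p, b_q)$, $b_q\neq a_q$. Symmetrically the coarsely-increasing-then-decreasing rays contribute $(b_p, a_q)$, $b_p\neq a_p$. The $H_q$-type \emph{monotone} rays and the switching ones together exhaust the set $\lbrace a_p\rbrace\times(\partial H_q\setminus\lbrace a_q\rbrace)$, and the $H_p$-type rays exhaust $(\partial H_p\setminus\lbrace a_p\rbrace)\times\lbrace a_q\rbrace$; no ray can limit to $(a_p,a_q)$ because that would force both $h_p(\alpha_p)\to+\infty$ and $h_q(\alpha_q)\to+\infty$, contradicting $h_p(\alpha_p)+h_q(\alpha_q)=0$.

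\textbf{Third stage: existence and uniqueness of representatives from $o$.} For the reverse inclusion I would show each listed boundary point is realised by a geodesic ray from $o$. Given $(b_p,a_q)$ with $b_p\neq a_p$: take a geodesic ray $r_p$ in $H_p$ from $o_p$ to $b_p$; since $b_p\neq a_p$, Proposition \ref{PropHautGDLH}(2) gives $h_p(r_p(t))\to-\infty$; lift it to $\mathcal{H}$ by attaching at each point the vertical ray in $H_q$ of the matching height — as in the construction of $c_1$ in the proof of Property \ref{HoroProdConnected} — producing a path that is a $(1,c)$-quasigeodesic of $\mathcal{H}$; straighten it to a genuine geodesic ray from $o$ (using Busemann uniqueness of geodesics, Property \ref{PropertyPapadoRayStartPoint}, applied after one exhibits the quasigeodesic — or better, directly build the geodesic ray by a limiting argument on geodesic segments $[o,\alpha(t_n)]$ and apply the shape theorem \ref{THMA}). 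The symmetric construction handles $(a_p,b_q)$. Finally, uniqueness of the representative from $o$ in each class is immediate from Property \ref{PropertyPapadoRayStartPoint} once one knows $\mathcal{H}$ is Busemann — but $\mathcal{H}$ with $d_{\mathcal{H},N}$ need not be Busemann, so instead I would argue uniqueness directly: two asymptotic rays from $o$ must have asymptotic projections to $H_p$ and $H_q$, and in those (Busemann) spaces projections from a common point are unique, whence the rays coincide by Proposition \ref{PropGeodVertInHoro}-type rigidity.

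\textbf{Main obstacle.} The delicate point is the last stage — showing every candidate boundary point is actually hit by a genuine \emph{geodesic} ray based at $o$ (not merely a quasigeodesic), and that distinct classes give distinct boundary points with no collapsing. The quasigeodesic-to-geodesic passage is routine in $H_p$ and $H_q$ via Theorem \ref{THMPAPADO}, but transferring it back to $\mathcal{H}$ requires care because $\mathcal{H}$ is only coarsely Busemann; I expect to lean on Theorem \ref{THMA} (limits of geodesic segments $[o,x_n]$ with $h(x_n)\to\pm\infty$ have controlled shape) plus a diagonal/Arzel\`a--Ascoli extraction in the proper space $\mathcal{H}$ to produce the ray, and on the projection-rigidity to separate classes. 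Verifying that the boundary is independent of $o$ and of $N$ is then a short corollary of the explicit description.
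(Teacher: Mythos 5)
Your proposal is correct in substance and its first two stages coincide with the paper's argument: by Lemma \ref{RayChangesOnce} a geodesic ray from $o$ is eventually coarsely monotone, by Lemma \ref{LemCoarsClosVert} its tail lies at bounded Hausdorff distance from a vertical geodesic ray, the direction pair of that vertical ray lies in $\big((\partial H_p\setminus\lbrace a_p\rbrace)\times\lbrace a_q\rbrace\big)\cup\big(\lbrace a_p\rbrace\times(\partial H_q\setminus\lbrace a_q\rbrace)\big)$, and $(a_p,a_q)$ is excluded by the height constraint. Where you genuinely diverge is the basing-at-$o$ step: the paper disposes of it in two lines by applying Property \ref{PropertyPadoRayStartPoint} --- more precisely Property \ref{PropertyPapadoRayStartPoint} --- in each factor, replacing $V_p,V_q$ by asymptotic rays $V_p',V_q'$ issued from $o_p,o_q$ and declaring $V'=(V_p',V_q')$ a vertical ray of $\mathcal{H}$ from $o$, whereas you realise each admissible direction pair by lifting a geodesic ray of one factor against a vertical geodesic of the other (the $H_q$-component should run along a vertical geodesic \emph{line} through $o_q$, available by Property \ref{ExistsVertGeodInx}, since $h_p\circ r_p$ may initially increase), note via Corollary \ref{lengthGeod} that the lift is a $(1,c)$-quasigeodesic because its $H_q$-relative displacement vanishes, and extract a genuine geodesic ray from $o$ as a limit of segments $[o,\gamma(t_n)]$ controlled by Theorem \ref{THMA}. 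Your heavier route buys real robustness: the componentwise transplant does not obviously produce a point set of $\mathcal{H}$, since the ray of the descending factor given by Property \ref{PropertyPapadoRayStartPoint} is asymptotic to the lower half of a vertical geodesic but need not have affine height, so $h_p+h_q=0$ can fail pointwise; moreover not every direction is the direction of a vertical ray based exactly at $o$ (already in $T_p\bowtie T_q$ only the ends below $o_p$ are reached by descending vertical rays from $o_p$), so some realisation argument such as your limiting construction is what actually yields representatives starting at $o$, and your projection argument supplies the injectivity of the map from classes to direction pairs that is otherwise left implicit. What remains to be written out in your plan is the verification that the limit ray obtained by Arzel\`a--Ascoli is asymptotic to the intended vertical direction (uniform fellow-travelling of the segments $[o,\gamma(t_n)]$ with the lift, via Theorem \ref{THMA} and Lemma \ref{HPlusHMinusGeod}); this is routine with the paper's estimates. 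Finally, uniqueness of the representative from $o$ is not needed for the statement, only the injectivity you already address.
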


The fact that $(a_X,a_Y)$ is not allowed as a direction in $ X\bowtie Y $ is understandable since both heights in $X$ and $Y$ would tend to $+\infty$, which is impossible by the definition of $ X\bowtie Y $.

\begin{proof}
Let $\alpha$ be a geodesic ray. Lemma \ref{RayChangesOnce} implies that there exists $t_0\in\mathbb{R}$ such that $\alpha$ is coarsely monotone on $[t_0,+\infty[$. Then Lemma \ref{LemCoarsClosVert} tells us that $\alpha([t_0,+\infty[)$ is at finite Hausdorff distance from a vertical geodesic ray $V=(V_X,V_Y)$, hence $\alpha$ is also at finite Hausdorff distance from $V$. 
\\Since $X$ is Busemann and proper, Property \ref{PropertyPapadoRayStartPoint} ensure us there exists $V_X'$ a vertical geodesic ray such that $V_X$ and $V_X'$ are at finite Hausdorff distance with $V_X'(0)=o_{X}$. Similarly, there exists $V_Y'$ a vertical geodesic ray of $Y$ with $V_Y'(0)=o_Y$ such that $V_Y$ and $V_Y'$ are at finite Hausdorff distance. 
\\Furthermore, there is at least one vertical geodesic ray $V'=(V_Y',V_X')$ in every asymptotic equivalence class of geodesic rays, hence $\partial_o  X\bowtie Y $ is the set of asymptotic equivalence classes of vertical geodesic rays starting at $o$. Therefore, an asymptotic equivalence class can be identified by the couple of directions of a vertical geodesic ray. Then $\partial_o  X\bowtie Y $ can be identified to: 
\begin{equation}
\Big(\big(\partial X\setminus  \lbrace a_X\rbrace\big)\times\lbrace a_Y\rbrace\Big)\bigcup\Big(\lbrace a_X\rbrace\times\big(\partial Y\setminus \lbrace a_Y\rbrace\big) \Big).\nonumber
\end{equation}
the union between downward directions and upward directions, which proves the theorem.
\end{proof}

\begin{example}
In the case of Sol, $X$ and $Y$ are hyperbolic planes $\mathbb{H}_2$, hence their boundaries are $\partial X =\partial\mathbb{H}_2=S^1$ and $\partial Y =S^1$. Then $\partial_o \mathrm{Sol}$ can be identified to the following set: 
\begin{equation}
\big(S^1\setminus  \lbrace a_X\rbrace\big)\times\lbrace a_Y\rbrace\bigcup\lbrace a_X\rbrace\times\big(S^1\setminus \lbrace a_Y\rbrace\big) .
\end{equation}
It can be seen as two lines at infinity, one upward $\lbrace a_X\rbrace\times\big(S^1\setminus \lbrace a_Y\rbrace\big)$ and the other one downward $\big(S^1\setminus  \lbrace a_X\rbrace\big)\times\lbrace a_Y\rbrace$ .
\end{example}

It is similar to Proposition 6.4 of \cite{Troya}.

%

\end{document}